\documentclass[reqno]{amsart}
\usepackage{amsmath, amsthm, amssymb, amsfonts}
\usepackage{mathtools}
\usepackage{subcaption}
\usepackage[normalem]{ulem}
\usepackage[numbers]{natbib}
\usepackage{graphicx}
\usepackage{booktabs}
\usepackage{xcolor}
\usepackage{tikz}
\usepackage[foot]{amsaddr}
\usepackage{hyperref}
\usepackage{longtable}
\hypersetup{
    colorlinks=true,
    citecolor=green!50!black,
    linkcolor=red!80!black,
    filecolor=blue,
    urlcolor=blue!70!black,
}
\usepackage[normalem]{ulem}
\usepackage{enumitem}
\usepackage[margin=1.5in]{geometry}

\newtheorem{theorem}{Theorem}[section]
\newtheorem{proposition}[theorem]{Proposition}

\newtheorem{lemma}[theorem]{Lemma}
\theoremstyle{definition}
\newtheorem{remark}[theorem]{Remark}
\newtheorem{example}[theorem]{Example}
\newtheorem{definition}[theorem]{Definition}

\newcommand{\arxivonly}[1]{#1}
\newcommand{\journalonly}[1]{}

 \title[Madelung hydrodynamics and Poisson geometry of wave functions]{Madelung hydrodynamics \\and Poisson geometry of wave functions}

\author{Boris Khesin${^{1\journalonly{,*}}}$}
\address{${^1}$Department of Mathematics, University of Toronto, Canada}
\arxivonly{
\email{khesin@math.toronto.edu}
}
\journalonly{
\thanks{${}^*$Corresponding author, \texttt{khesin@math.toronto.edu}}
}

\author{Klas Modin${^{2}}$}
\address{${^2}$Department of Mathematical Sciences, Chalmers University of Technology and University of Gothenburg, SE-412~96 Gothenburg, Sweden}
\arxivonly{
\email{klas.modin@chalmers.se}
}

\subjclass[2020]{37K65, 35Q35, 53D20, 76Y05}

\date{}

\newcommand{\WF}{\Psi}
\newcommand{\WFnonzero}{\Psi_{\not=0}}

\newcommand*{\WFgamma}[1][\gamma]{\Psi_{#1}}

\newcommand{\WFunion}[1][\gamma]{\Psi_{\cup #1}}

\newcommand{\Dens}{\mathrm{Dens}}
\newcommand{\Densgamma}[1]{\Dens_{\gamma}}
\newcommand{\Densunion}[1]{\Dens_{\cup\gamma}}
\newcommand{\Mom}{\mathbf{M}}
\newcommand{\DC}{\mathcal{DC}}
\newcommand{\hd}{\boldsymbol{\psi}}

\begin{document}

\begin{abstract}
We describe the Poisson geometry of the Madelung transform between quantum mechanics and hydrodynamics for generic wave functions. We prove that for arbitrary oriented manifolds this transform, being regarded as a momentum map, naturally defines prequantum bundles for coadjoint orbits of semidirect extensions of diffeomorphism groups. Furthermore, we show that the Madelung framework provides a natural infinite-dimensional version of the convexity results for Hamiltonian torus actions, thus giving a partial answer to Atiyah's question. 

In particular, for wave functions without zeros our results provide a K\"ahler map between the infinite-dimensional Fubini--Study and Fisher--Rao geometries, thus extending previous results to non-simply-connected manifolds. Furthermore, for wave functions with noncritical zeros, the Madelung transform is shown to be a symplectomorphism to the coadjoint orbits with Morse--Bott densities. The latter, in turn, furnishes a novel momentum map point of view on the Wallstrom quantization condition for the hydrodynamical form of quantum mechanics. 
We also comment on the relation between the Madelung setting and the Marsden--Weinstein symplectic structures on knots and membranes.
\end{abstract}

\maketitle

\arxivonly{
\tableofcontents
}

\journalonly{
\subsection*{Statements and Declarations}
The authors have no relevant financial or non-financial interests to disclose.
Data sharing is not applicable to this article as no datasets were generated or analyzed during the current study.

}

\section{Introduction}

In 1927, right after the birth of quantum mechanics, Madelung~\cite{Ma1927} gave a hydrodynamical reformulation of the non-relativistic Schrödinger equation.
It has since been debated if the two formulations are equivalent.
The best known critique against Madelung's hydrodynamical setting was formulated by Wallstrom~\cite{Wa1994,Wa1994b}, who raised two main objections.
First, that the hydrodynamical formulation needs to be augmented with the quantization condition of Takabayasi~\cite{Ta1952}, similar to how Bohr's original atomic model needed an \emph{a priori} quantization condition on the orbits.
Second, that the initial value problem for the hydrodynamical formulation is ill-posed.
Many authors, both mathematicians and physicists, have since argued for and against Wallstroms objections; for details and an overview of the literature we refer to the topical review by Reddinger and Poirier~\cite{RePo2023}.
However, the precise relation between the two formulations has remained clouded. 

In this paper we resolve the Wallstrom controversy.
Namely, we pinpoint the conditions for when the original Schrödinger equation and Madelung's hydrodynamical equations are equivalent with respect to generic, strong solutions. 
It turns out to be a question of geometry rather than analysis, where 
the Schrödinger equation on a general Riemannian manifold $M$ can be approached via the framework of geometric mechanics (cf.~\citet{Ar1989}).
Our solution then rests on two prior results: 
Fusca's~\cite{Fu2017} realization of the Madelung transform as a momentum map relative to a group action on the space of wave functions, and the idea of Chern and Ishida~\cite{ChIs2025} to represent codimension-2 submanifolds as zero-sets of wave functions relative to a prequantum structure. 
With these two ingredients, the equivalence problem becomes a question of the geometry and topology of 
an appropriate group action on wave functions.











\subsection{The Madelung equations}

Let us recall Madelung's hydrodynamical interpretation~\cite{Ma1927} of the Schrödinger equations.
On a Riemannian manifold~$M$, consider the following (possibly non-linear) Schrödinger equation 
\begin{equation}\label{eq:schrodinger}
    \mathrm i \dot\psi = -\Delta\psi + V \psi + f(\lvert \psi \rvert^2)\psi .
\end{equation}
for a wave function $\psi \in  C^\infty(M,\mathbb C)$, where $V\in C^\infty(M,\mathbb R)$ is a potential and $f\in C^\infty(\mathbb R_{\ge 0}, \mathbb R)$ defines a non-linearity.
The Hamiltonian relative to the symplectic structure $2\, \mathrm{Im}\langle \cdot_1,\cdot_2\rangle_{L^2}$ is
\begin{equation*}
    H(\psi) = \int_M \Big( \lvert\nabla \psi \rvert^2 + V\lvert \psi\rvert^2 + F(\lvert \psi\rvert^2) \Big)\,\mu\,,
\end{equation*}
where $F'(r) = f(r)$ and the integration is with respect to the Riemannian volume form $\mu$. 
Madelung realized that the Schrödinger equation~\eqref{eq:schrodinger} can be written as a barotropic hydrodynamical system in the velocity field $v = \mathrm{Im}\big( \frac{\nabla\psi}{\psi}\big) $ and the density function $\rho = \lvert \psi\rvert^2$.
Indeed, in these variables, the Schrödinger equation assumes the form of the Madelung equations
\begin{equation}\label{eq:madelung_eq}
    \dot v + \nabla_v v = - 2\nabla \left( V + f(\rho) - \frac{\Delta\sqrt{\rho}}{\sqrt{\rho}} \right), \qquad \dot\rho + \operatorname{div}(\rho v) = 0.
\end{equation}
 
 The Madelung transform is $\psi=\sqrt{\rho e^{\rm i\theta}}$ with $v=\nabla \theta/2$. Note that the vector field $v$ contains the same information as $\theta$ up to an additive constant. In other words, the Madelung transform takes a wave function $\psi$ to the pair $(\theta, \rho)$, where $\theta$ and $\rho$ are the argument and the modulus of $\psi^2$, with $\theta$  understood up to an additive constant.

There is an obvious problem with the change of coordinates $\psi \leftrightarrow (v,\rho)$, as it leads to singularities whenever the wave function is zero somewhere.
By going from the Lagrangian variables $(v,\rho)$ to the Hamiltonian variables $(\nu,\rho)= (\rho v, \rho)$ the problem is partially resolved.
However, the concern for what happens when $\rho$ attains zero is the core issue underlying Wallstrom's critique, cf.~\cite{Wa1994}.
One can see it already at the level of the Hamiltonians: whereas $H(\psi)$ is perfectly smooth regardless of the zeros of $\psi$, its expression in the variables $(\nu,\rho)$ has singularities at zeros of $\rho$:
\begin{equation}\label{eq:Hamiltonian}
    H(\nu,\rho) = \int_M \left( \frac{\lvert \nu \rvert^2}{\rho} + \lvert \nabla \sqrt{\rho}\rvert^2 + V\rho + F(\rho) \right)\mu\,.
\end{equation}

\medskip

\begin{remark}
The two aforementioned objections by Wallstrom are related to the zero set of the wave function, $\gamma=\psi^{-1}(0)$. 
If $\operatorname{codim}\gamma \ge2$, then for a contour $\ell$ around $\gamma$ the phase $\theta$
has to satisfy the condition $\oint_\ell {\rm d}\theta =4\pi k$ for $k\in \mathbb Z$, since it corresponds to a univalued wave function $\psi=\sqrt{\rho e^{\rm i\theta}}$. 
Thus, solutions to the Schrödinger equation~\eqref{eq:schrodinger} correspond to only a certain subset of  solutions of the Madelung equations~\eqref{eq:madelung_eq}, and the ``quantization condition'', while absent on the Schrödinger side, according to Wallstrom's first objection, seems \emph{ad hoc} on the hydrodynamical side. 

The second objection, also called the ``Wallstrom phenomenon'',  is that if $\gamma$ separates $M$ into connected components, the wave functions can be chosen with a large degree of freedom in different connected components, since while crossing the ``wall of zeros'' $\gamma=\psi^{-1}(0)$, the corresponding wave functions ``forget'' their phases, cf.~\cite{Wa1994b}.
Wallstrom's argument was that a wave function solving the Schrödinger equation can be multiplied by a constant phase in one of the components, which the Madelung equations will not notice, as they rely only on~$\nabla\theta$.

However, if the zeros of a smooth wave function $\psi$ are noncritical, the 
set $\gamma=\psi^{-1}(0)$ is a codimension-2 submanifold of $M$, not of codimension 1 as required for the Wallstrom phenomenon,
so $\gamma$ does not split the manifold in different connected components.
In particular, for a connected $M$ the complement $M\backslash \gamma$ is also connected. 
Since zeros of $\psi$ are generically noncritical, 
the Wallstrom phenomenon of non-uniqueness of solutions is thus generically ruled out. 
In what follows we focus our discussion on the first objection, the  quantization condition
(cf.~Example~\ref{ex:charts} below and the discussion in Remark~\ref{rem:switching}).
\end{remark}

\subsection{Properties of the Madelung transform}
For nonvanishing wave functions, the Madelung transform has remarkable geometric properties.
Indeed, let $\psi$ be  a  nonvanishing (normalized) wave function on a manifold $M$, namely $$\psi\in \WFnonzero \coloneqq C^\infty(M, \mathbb C^*)\cap \{\psi\mid \int_M \lvert \psi\rvert^2\mu = 1 \}.$$ 
Consider the projectivization $\mathbb{P}\WFnonzero$
of the space $\WFnonzero$, which is naturally equipped with the (infinite-dimensional) Fubini--Study symplectic structure and metric.
Also consider the space of (smooth) probability  
densities $\Dens(M)=\{\varrho \in \Omega^n(M) \mid \varrho>0, \, \int_M\varrho=1  \}$, 
as well as its cotangent space $T^*\Dens(M)=\{(\varrho, [\theta]) \mid \varrho\in \Dens(M), \, [\theta]\in C^\infty(M, \mathbb R)/\mathbb R\}$, where the cosets $[\theta]$ stand for phase functions $\theta$ considered modulo an oveall phase shift.

\begin{theorem}[\cite{KhMiMo2019}]\label{KMM}
  For a simply-connected manifold $M$, the Madelung transform, taking a nonvanishing wave function $\psi=\sqrt{\rho e^{\rm i\theta}}$ to $\mathbf{S}(\psi)=(\varrho, [\theta])$ for $\varrho=\rho\mu$
  and regarded as a map $\mathbf{S}\colon \mathbb{P}\WFnonzero \to T^*\Dens(M)$, is
    
    $i)$ a symplectomorphism of the Fubini--Study symplectic structure on $\mathbb{P}\WFnonzero$ and the standard symplectic structure
    on $T^*\Dens(M)$, 
    
    $ii)$ an isometry of the Fubini--Study metric on $\mathbb{P}\WFnonzero$ and the Sasaki--Fisher--Rao metric
    on $T^*\Dens(M)$. 
\end{theorem}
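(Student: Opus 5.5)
The plan is to first check that $\mathbf{S}$ is well defined and bijective, and then verify both properties by direct computation in local coordinates $(\rho,\theta)$, after passing to the horizontal lift. For well-definedness: a nonvanishing $\psi$ on a simply-connected $M$ can be written as $\psi=\sqrt{\rho}\,e^{\mathrm i\theta/2}$. Here $\rho=|\psi|^2>0$, and the smooth function $\theta$ exists because $M$ is simply connected. The map $\psi^2/|\psi|^2\colon M\to S^1$ lifts to $\mathbb R$ exactly because $\pi_1(M)$ is trivial, which is the only place the hypothesis enters. The lift $\theta$ is unique up to adding an element of $4\pi\mathbb Z$. Multiplying $\psi$ by a constant phase $e^{\mathrm i\alpha}$ shifts $\theta$ by the constant $2\alpha$. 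Hence $[\theta]\in C^\infty(M,\mathbb R)/\mathbb R$ depends only on the class of $\psi$ in $\mathbb P\WFnonzero$. Normalization $\int|\psi|^2\mu=1$ gives $\varrho=\rho\mu\in\Dens(M)$. The inverse map $(\varrho,[\theta])\mapsto[\sqrt{\rho}\,e^{\mathrm i\theta/2}]$ is clearly smooth, so $\mathbf S$ is a diffeomorphism.

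For $i)$, I differentiate. A tangent vector $\delta\psi$ at $\psi$ can be written as $\delta\psi=\psi\big(\tfrac{\delta\rho}{2\rho}+\tfrac{\mathrm i}{2}\delta\theta\big)$. I represent tangent vectors to $\mathbb P\WFnonzero$ by their horizontal lifts, namely those orthogonal to $\psi$ and to $\mathrm i\psi$. Orthogonality to $\psi$ gives $\int\delta\rho\,\mu=0$. Orthogonality to $\mathrm i\psi$ fixes the additive constant in $\delta\theta$, via $\int\rho\,\delta\theta\,\mu=0$. On horizontal vectors the Fubini--Study form is the restriction of $2\,\mathrm{Im}\langle\cdot_1,\cdot_2\rangle_{L^2}$, up to the normalization fixed in the paper. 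A pointwise computation gives
\begin{equation*}
2\,\mathrm{Im}\big(\overline{\delta_1\psi}\,\delta_2\psi\big)=\tfrac12\big(\delta_1\rho\,\delta_2\theta-\delta_2\rho\,\delta_1\theta\big).
\end{equation*}
Integrating over $M$ yields the canonical form $\int(\delta_1\theta\,\delta_2\varrho-\delta_2\theta\,\delta_1\varrho)$ on $T^*\Dens(M)$, up to the fixed constant. This pairing is insensitive to the constant ambiguity in $\delta\theta$ because $\int\delta\varrho=0$. That insensitivity is what makes the pairing of $\Dens$ with $C^\infty/\mathbb R$ consistent. The remaining task is to match the constants with the paper's conventions.

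For $ii)$, the same horizontal lift gives the Fubini--Study metric as $4\,\mathrm{Re}\langle\delta\psi,\delta\psi\rangle$, again up to convention. Its integrand is $|\psi|^2\big(\tfrac{\delta\rho^2}{4\rho^2}+\tfrac{\delta\theta^2}{4}\big)$, which after integration becomes
\begin{equation*}
\int\Big(\frac{\delta\rho^2}{\rho}+\rho\,\delta\theta^2\Big)\mu .
\end{equation*}
The first term is the Fisher--Rao metric on $\Dens(M)$. The second term is $\int\rho\,\delta\theta^2\mu$ with $\delta\theta$ normalized by $\int\rho\,\delta\theta\,\mu=0$, which is exactly the fiber (dual Fisher--Rao) norm of the cotangent vector $\delta[\theta]$. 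The main obstacle is identifying this with the Sasaki lift of Fisher--Rao to $T^*\Dens$. To do so I will check that the Levi-Civita connection of Fisher--Rao produces no cross terms in the chosen coordinates. Fisher--Rao is flat in the variable $\sqrt{\rho}$, being a piece of the round sphere. The horizontal/vertical splitting must therefore be taken with respect to that connection, and $\delta[\theta]$ must be identified with the covariant derivative of the covector, which requires verifying that the connection terms cancel for the coset representative with $\int\rho\,\delta\theta\,\mu=0$. I expect this to work because the phase and modulus parts of the round metric on the unit sphere in $L^2$ are orthogonal. Still, this careful bookkeeping, together with the constant factors, is where I expect the real work to lie.
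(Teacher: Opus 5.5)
\paragraph{Part $i)$: correct, by a different route.}
Your argument for $i)$ is correct, but it does not follow the route the paper relies on. The paper recalls \cite{KhMiMo2019} inside the proof of Theorem~\ref{thm:no-zeros} and never computes the pulled-back 2-form. Instead it uses three facts:
\begin{itemize}
\item $\Mom\circ\mathbf{S}^{-1}(\varrho,[\theta])=(\mathrm d\theta\otimes\varrho,\varrho)$ is the momentum map of the cotangent-lifted $\DC$-action on $T^*\Dens(M)$;
\item this action is transitive;
\item $\Mom$ is injective on $\mathbb{P}\WFnonzero$ (Lemma~\ref{lem:preimage}).
\end{itemize}
Together these identify both spaces with the same coadjoint orbit and its Kirillov--Kostant form.

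Your route is different. You combine the pointwise identity $2\,\mathrm{Im}(\overline{\delta_1\psi}\,\delta_2\psi)=\tfrac12(\delta_1\rho\,\delta_2\theta-\delta_2\rho\,\delta_1\theta)$ with the fact that the canonical form on $T^*\Dens(M)=\Dens(M)\times C^\infty(M)/\mathbb R$ has constant coefficients, since $\Dens(M)$ is open in an affine space. This is shorter and avoids the infinite-dimensional Kostant--Souriau-type step. It also makes the constant factor explicit. Horizontality is not needed here: any lifts tangent to the unit sphere compute the reduced form.

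What the momentum-map route buys is the identification of the image with a $\DC$-coadjoint orbit. The paper then exploits exactly this for non-simply-connected $M$, via the shift $\mathrm d\theta\mapsto\mathrm d\theta+\alpha_0$, and for wave functions with zeros.

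\paragraph{Part $ii)$: the planned final step is wrong and unnecessary.}
Your computation for $ii)$ coincides with the paper's and already completes the proof. The further step you plan rests on a false premise and would fail.

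By definition, the Sasaki--Fisher--Rao metric in the statement is $\frac12\int_M(\dot\varrho^2/\varrho+\dot\theta^2\varrho)$ with $\int_M\dot\theta\varrho=0$. Writing $g$ for the Fisher--Rao metric, this is $g\oplus g^{-1}$ in the canonical trivialization of $T^*\Dens(M)$. In other words, it is the lift with respect to the flat affine structure of $\Dens(M)$, as in the Hessian-metric (Dombrowski) construction of information geometry, or the action-angle form of toric K\"ahler metrics. You have shown that the pulled-back Fubini--Study metric is exactly this.

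It is not the Sasaki lift for the Levi-Civita connection, and Fisher--Rao is not flat:
\begin{itemize}
\item Under $\varrho\mapsto\sqrt{\varrho/\mu}$ it is a multiple of the round metric on an open piece of the unit $L^2$-sphere, which has constant positive curvature.
\item Its geodesics are great circles in $\sqrt\rho$, not affine lines in $\varrho$, so its Christoffel symbols in the $\varrho$-chart are nonzero.
\end{itemize}
Hence the Levi-Civita lift $g(\dot\varrho,\dot\varrho)+g^{-1}(\kappa,\kappa)$, with $\kappa=\dot\theta-\Gamma_\varrho(\dot\varrho)^{*}\theta$, depends on the fiber point $[\theta]$, whereas your Fubini--Study expression does not. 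For $\dot\theta=0$ and suitable $\theta$, that lift is strictly larger than $g(\dot\varrho,\dot\varrho)$.

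The normalization $\int\rho\,\delta\theta\,\mu=0$ only fixes an additive constant and cannot cancel such terms. The orthogonality of modulus and phase directions that you invoke gives precisely the block-diagonal form in the coordinate splitting, not the Levi-Civita one.

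So drop that step. Under the Levi-Civita reading, $\mathbf S$ would not be an isometry. Under the paper's definition, nothing is left to verify beyond matching constants.
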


The main results of the present paper provide a general framework  for wave functions with zeros, appropriate group actions and Poisson properties of the Madelung transform on  
arbitrary manifolds, 
while the above theorem becomes a particular case of that setting.
Namely, on the space of wave functions there is a natural group action discovered by Fusca \cite{Fu2017}.
It turns out it is convenient to regard wave functions as half-densities.
Indeed, we will often use the identification $\psi \simeq \hd = \psi\,\sqrt\mu$
by using a reference (nowhere vanishing) volume form $\mu$ on $M$.

 Fusca proved that the semi-direct product group $\DC \coloneqq \mathrm{Diff}(M)\ltimes C^\infty(M, S^1)$ 
 (representing  changes of coordinates on $M$ suplemented by the pointwise phase shifts)
 acts on the space of wave half-densities, while the corresponding momentum map $\Mom\colon C^\infty(M, \mathbb C) \to \mathfrak{dc}^* $  can also be regarded as the Madelung transform, see \cite{Fu2017}. 
 Below we discuss this momentum map in detail.  Throughout the paper, without loss of generality,  we will always be considering {\it normalized wave functions}, i.e., wave functions belonging to 
 the unit sphere
 $S^\infty_1:=\{\psi\in C^\infty(M, \mathbb C)~|~\int_M|\psi|^2\mu=1\}$.
 
  \medskip
  
  In Section \ref{sect:no-zeros} we derive various consequences of the momentum map
 for wave functions without zeros. 
 We show that for an arbitrary manifold $M$ the connected components of 
$\mathbb{P}\WFnonzero$ are enumerated by elements of $H^1(M, \mathbb Z)$. Furthermore,
in Theorem~\ref{thm:no-zeros} it is proved that to {\it each of those connected components 
the momentum map $\Mom$  associates  the 
corresponding coadjoint orbit $\mathcal O_{(\nu, \rho)}\subset \mathfrak{dc}^* $, as well as
it provides the structure of a prequantum $S^1$-bundle
over the orbit.} In particular, this is a Poisson map and it implies the symplectomorphism of the Fubini--Study on $\mathbb{P}\WFnonzero$ and $T^*\Dens(M)$ as described in \cite{KhMiMo2019}.

Section \ref{sect:yes-zeros} deals with wave functions with zeros. The assumption of noncritical zeros 
$\gamma=\psi^{-1}(0)$
implies that $\operatorname{codim}\gamma=2$ and the corresponding densities $\varrho=|\psi|^2$ are Morse-Bott, i.e., they have the simplest type of degeneration on $\gamma$. Due to the Moser type theorem for Morse-Bott densities established in \cite{KhesinVolk25}, one can prove for such densities most of the properties valid for nondegenerate ones. Our second main result in
abbreviated form is the following.

\begin{theorem}{\bf ($=$ Theorems \ref{thm:sing-alpha}$'+$\,\ref{thm:zeros}$'$)}
 The momentum map $\Mom$ for normalized wave functions with noncritical zeros on $\gamma\subset M$
provides the correspondence of their connected components, which are enumerated by $H^1(M, \mathbb Z)$, with  coadjoint orbits $\mathcal O_{(\nu, \rho)}\subset \mathfrak{dc}^* $ and defines $S^1$-bundle
over each orbit in the image of $\Mom$. 

The orbits $\mathcal O_{(\nu, \rho)}$ in the image are coadjoint orbits for elements $\nu=\alpha\otimes \varrho$  where 
densities $\varrho$ are Morse-Bott with degenerations on $\gamma$, while 
the 1-form $\alpha$ has a pole-like singularity on $\gamma$ and
must satisfy the quantization condition:  the periods of $\alpha$ around components of 
$\gamma$ are equal to $\pm 2\pi$ depending on the components' orientations. 
\end{theorem}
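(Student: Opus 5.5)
I will first write the Fusca momentum map explicitly. The Lie algebra $\mathfrak{dc}=\mathfrak X(M)\ltimes C^\infty(M,\mathbb R)$ has dual consisting of pairs $(\nu,\varrho)$, where $\nu$ is a one-form density and $\varrho$ is a density. For the half-density $\hd=\psi\sqrt\mu$, with the symplectic form $2\,\mathrm{Im}\langle\cdot,\cdot\rangle$, the momentum map takes the form
$$\Mom(\psi)=\bigl(\mathrm{Im}(\bar\psi\, {\rm d}\psi)\otimes\mu,\ |\psi|^2\mu\bigr),$$
up to normalization constants. This expression is smooth even where $\psi$ vanishes. Away from $\gamma$ one writes $\psi=\sqrt{\rho}\,e^{{\rm i}\theta/2}$, which gives $\nu=\alpha\otimes\varrho$ with $\alpha={\rm d}\theta/2$ locally. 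Because $\Mom$ is equivariant, it maps the $\DC$-orbit of $\psi$ into the coadjoint orbit $\mathcal O_{\Mom(\psi)}$. Our first task is therefore to show that $\DC$ acts transitively on each connected component of the space of normalized wave functions with noncritical zeros on a curve isotopic to $\gamma$.

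\textbf{Transitivity.} Take two such wave functions $\psi_0,\psi_1$ in the same component, i.e.\ joined by a path of noncritical-zero wave functions. By isotopy extension, a diffeomorphism carries $\psi_1^{-1}(0)$ onto $\psi_0^{-1}(0)$. Next, the densities $|\psi_i|^2\mu$ are Morse–Bott, vanish quadratically along $\gamma$, and have the same total mass. The Moser-type theorem of \cite{KhesinVolk25} then supplies a diffeomorphism fixing $\gamma$ that identifies them. (This needs the normal Hessians to match; if they do not, I would first adjust them by a diffeomorphism acting linearly in the normal bundle.) After these steps $|\psi_0|=|\psi_1|$. The quotient $\psi_1/\psi_0$ extends smoothly across $\gamma$ to a map $M\to S^1$, since the noncritical zeros carry the same transverse orientation. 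Multiplication by this function is exactly the $C^\infty(M,S^1)$ factor of $\DC$. The connected components are classified by the homotopy class of $\psi/|\psi|$ on $M\setminus\gamma$ relative to a fixed reference. Differences between these classes are classes of maps $M\to S^1$, which gives the enumeration by $H^1(M,\mathbb Z)$.

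\textbf{Image and quantization.} For $\nu=\alpha\otimes\varrho$, the form $\alpha=\mathrm{Im}({\rm d}\psi/\psi)$ is closed on $M\setminus\gamma$. Its period around a small linking circle of a component of $\gamma$ equals $2\pi$ times the local winding number of $\psi$. For a noncritical zero this winding number is $\pm1$, with the sign determined by the orientation of that component of $\gamma$. Near $\gamma$, in normal polar coordinates, $\alpha\sim\pm{\rm d}\phi$, which is the pole-like singularity in the statement. Conversely, any pair $(\alpha,\varrho)$ with these periods and integral periods on $H_1(M)$ integrates to a wave function $\psi=\sqrt{\rho}\,\exp({\rm i}\!\int\alpha)$. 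This identifies the image of $\Mom$ with the union of the coadjoint orbits described.

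\textbf{Fibers, prequantum bundle, and the main obstacle.} A fiber of $\Mom$ consists of the wave functions with the same $\rho$ and $\alpha$. Two such wave functions differ by a locally constant phase on the connected set $M\setminus\gamma$, so each fiber is an $S^1$. Combined with transitivity, the stabilizer computation then identifies the component with an $S^1$-bundle over the orbit. Its connection is $\mathrm{Im}\langle\hd,{\rm d}\hd\rangle$, with curvature equal to the KKS form, since $\Mom$ is a momentum map. I expect the main obstacle to be the transitivity step: Moser's theorem must be combined with the phase correction near $\gamma$ while keeping everything smooth, and one must verify that the stabilizer in $\DC$ of $\Mom(\psi)$ acts on the fiber only through constant phases, so that the orbit is a smooth quotient.
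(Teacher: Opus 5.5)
Your route is the paper's: the Khesin--Volk Moser theorem equalizes $\lvert\psi\rvert$, a phase factor completes transitivity, the winding $\pm1$ gives the periods, and the fibres are constant phases. But there is a gap at the one step where the hypothesis ``same component'' has to do real work. After equalizing the moduli you multiply by $u=\psi_1/\psi_0\colon M\to S^1$ and stop. Your transitivity argument never uses that $\psi_0,\psi_1$ lie in the same component, except to match coorientations.

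That cannot suffice. We have $\Mom(u\psi)=\bigl(\nu+\mathrm{Im}(\bar u\,\mathrm du)\otimes\varrho,\ \varrho\bigr)$, and $\mathrm{Im}(\bar u\,\mathrm du)$ can represent any class in $H^1(M,2\pi\mathbb Z)$. So if arbitrary circle-valued $u$ are allowed, all the components you label by $H^1(M,\mathbb Z)$ land in one and the same coadjoint orbit. Already for $\gamma=\emptyset$ and $M=S^1$, all $\Mom(e^{\mathrm ikx}/\sqrt{2\pi})$ would lie in a single orbit. This contradicts the one-to-one correspondence you are supposed to prove. The correspondence holds only for phase factors $e^{-\mathrm ia}$ with $a\in C^\infty(M,\mathbb R)$, i.e.\ the identity component, which is what the paper's proofs actually use. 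With that restriction you must show that $u$ is null-homotopic. The paper does this at exactly this point: it notes that because $\psi_1,\psi_2$ lie in the same component, $a=\theta_1-\theta_2$ is a univalued real function.

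One way to fill the gap is as follows.
\begin{enumerate}
\item Transport the connecting path $\psi_t$ by its isotopy extension, so that the zero set stays equal to $\gamma$.
\item The class of $(\psi_t/\lvert\psi_t\rvert)/(\psi_0/\lvert\psi_0\rvert)$ in $H^1(M\setminus\gamma,\mathbb Z)$ is then constant in $t$, and it vanishes at $t=0$.
\item Take the Moser-type diffeomorphism isotopic to the identity rel $\gamma$, as a path-method proof provides; it then does not change this class.
\item Every loop in $M$ can be pushed off the codimension-2 set $\gamma$. Hence the smooth extension of $u$ to $M$ is null-homotopic, i.e.\ $u=e^{-\mathrm ia}$ with $a$ real.
\end{enumerate}
Once this is in place, injectivity from components to orbits follows from your fibre computation. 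Indeed, $\Mom(\psi)=\mathrm{Ad}^*_g\Mom(\psi')=\Mom(g\psi')$ forces $\psi=e^{\mathrm ic}g\psi'$ with $g$ in a connected group.

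A separate, minor point: your ``conversely'' sentence overclaims, and the statement does not need it. The period conditions alone do not make $\sqrt\rho\,\exp(\mathrm i\int\alpha)$ smooth across $\gamma$. Near $\gamma$ take $\varrho=r^2\mu$ and $\alpha=\mathrm d\phi+\mathrm d\log r$. Then $\alpha\otimes\varrho=(x\,\mathrm dy-y\,\mathrm dx+x\,\mathrm dx+y\,\mathrm dy)\otimes\mu$ is smooth and the meridian period is $2\pi$. Yet the only candidates are $e^{\mathrm ic}z\lvert z\rvert^{\mathrm i}$, which are not $C^1$, and by equivariance no point of that orbit lies in the image. Either drop the converse or add a regularity condition at $\gamma$. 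Also note that the integrality condition concerns periods on $H_1(M\setminus\gamma)$, not $H_1(M)$.
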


Thus the Wallstrom quantization condition on the hydrodynamical side exactly corresponds to the orbits belonging to the image of the momentum map in the dual~$\mathfrak{dc}^* $.

There is yet another phenomenon observed for the quantum mechanics / fluids correspondence:   
a trajectory $\psi_t$ in the symplectic space $C^\infty(M,\mathbb{C})$ is projected by a momentum map $\Mom$ 
to a trajectory in the Poisson space $\mathfrak{dc}^*$ {\it switching between symplectic leaves}, see Remark~\ref{rem:switching} and Figure~\ref{fig:projection}. 
This cannot happen in smooth Hamiltonian systems, so we observe here that the projected image trajectory
approaches the boundary of the symplectic leaf with infinite speed at the critical time. (This manifests itself by $\varrho$ in the denominator of the expression for the Hamiltonian $H$ in the space $\mathfrak{dc}^*$, which implies the infinite variational derivative at zeros of $\varrho$, cf. Equation~\eqref{eq:Hamiltonian}.)
These ``jumps between coadjoint orbits" manifest another curious feature of the hydrodynamical form of the Schr\"odinger equation  not discussed before.

\begin{example}\label{ex:charts}
In a sense, the two features, quantization condition and switching the symplectic leaves, are related to a special choice of coordinate charts, provided by the Madelung transform. Here is a basic analogy illustrating it. 
Consider the harmonic oscillator $\ddot x + x = 0$ with initial data $x_0 > 0$ and $\dot x_0 \in \mathbb{R}$.
The phase space consists of points $(x,\dot x) \in \mathbb{R}^2$. 
We now introduce new coordinates $(y,\dot y)$ via $x = \mathrm{e}^y$ and the corresponding velocity variable $\dot x = \mathrm{e}^y \dot y$.
Expressed in these coordinates, the system becomes $\ddot y + \dot y^2 + 1 = 0$.
As long as the solution fulfills $x(t) > 0$, the original system in $(x,\dot x)$ and the new system in coordinates $(y,\dot y)$ are equivalent.
However, the coordinate chart $(y,\dot y)$ is only local and breaks down when $x(t) \leq 0$.
For the system expressed in $(y,\dot y)$-coordinates, it means that $y(t) \to -\infty$ in finite time.
This is a property of the coordinate chart, rather than the equation. 
An easy fix to the problem is just to change to a different coordinate chart. 
If we take $x = -\mathrm{e}^{y}$ we obtain the same equation for $(y,\dot y)$, but we have to make a ``jump'' in phase space at the transition when $x(t)$ passes from positive to negative values.
In this analogy, the Schr\"odinger equation corresponds to an evolution in $x$-variables, while the Madelung equations correspond to that in the $y$-coordinate chart. While Schr\"odinger solutions remain smooth,
the variables used in the Madelung equations break down (resulting in hydrodynamical blow-up) at instances when the topology of the zero-set of the wave function changes, i.e., zero becomes a critical value of the wave function. 
\end{example}


\subsection{Infinite-dimensional torus actions}
Given the situation described in the analogy, one may ask why to consider the hydrodynamical formulation at all, since the wave function formulation already admits global coordinates.
However, the effective, physical phase space is not linear: due to 
the global phase invariance, the actual phase space is a complex projective manifold $\mathbb{ CP}^\infty  $ and the Schrödinger equation is a Hamiltonian system with respect to its natural symplectic structure ({cf.}~geometric quantum mechanics~\cite{Ki1979,BrHu2001}).
This complex projective space does not admit a global coordinate chart (e.g., in the simplest case of a qubit,  i.e. the quantum version of the classic binary bit, we have $\mathbb{CP}^1 \simeq \mathbb{S}^2$).
In this respect, the Madelung transform provides local canonical coordinates, and thus puts the Schrödinger equation in canonical Hamiltonian form.

Furthermore, we prove that there is a fundamental {\it analogy between finite-dimen\-sional toric geometry and the Madelung momentum action}: the latter can be naturally  viewed as an infinite-dimensional version of the classical convexity theorems, thus giving a partial answer to Atiyah's question about that posed in \cite{atiyah1985}.
We explore this in detail in Section~\ref{sect:finiteMadelung} (see in particular Section~\ref{sect:table}) and give here only a sample of the correspondence:

$(a)$ The unitary group $U(n+1)$ acts on transitively on 
$\mathbb{CP}^n$,  its smallest nontrivial coadjoint orbit. Similarly the semidirect product group $\DC
={\rm Diff}(M)\ltimes C^\infty(M,S^1)$ acts on the projective space of wave functions $\mathbb{P}\WF$.

$(b)$ The maximal torus $\mathbb T^n$ corresponds to the infinite-dimensional phase torus
$\mathbb T^\infty=C^\infty(M,S^1)$
acting by pointwise phase rotations. 

$(c)$ While the image of the momentum map
$\Mom_{\mathbb{T}^n}\colon\mathbb{CP}^n\to(\mathfrak t^n)^{*}\simeq \mathbb R^n$ is the simplex $\Delta^n =
\{x_i\ge0,\ \sum x_i=1\}$, its infinite-dimensional analogue, the image of the momentum map
$\Mom_{\mathbb{T}^\infty}\colon \mathbb{P}\WF\to
{(\mathfrak{t}^\infty})^{*}
\simeq \Omega^n(M)$ 
has the density (or probability)  space 
$
\overline\Dens(M)
=
\{\varrho\ge0,\ \varrho(M)=1\}
$
(after an appropriate completion), with faces of this ``continuous polytope" corresponding to density functions vanishing on various subsets of $M$ and implied adjacency relations.

We refer to Section~\ref{sect:table}
for more details and analogues.

\begin{remark}
It is interesting to compare this correspondence with another one, via a symplectomorphism group $ {\rm Symp}_\omega(M)$
of a toric manifold $M^{2n}$, where the role of the maximal torus is played by equivariant symplectomorphisms ${\rm Symp}_\omega(M, T)$, cf.~\cite{bloch1993schur, mousavi2026}. The latter is  similar to the action of $U(n+1)$ on generic (i.e., ``largest") orbits in $\mathfrak u(n+1)^*$. On the other hand,  the action of the group $\DC$ on wave functions is similar to the action of $U(n+1)$ on the smallest nontrivial coadjoint orbits (which are symplectomorphic to $\mathbb{CP}^n$), namely, orbits  of matrices of rank 1, see Section~\ref{sect:symplectomorphisms}.
\end{remark}


Finally, in Appendix~\ref{sect:MW} we recall the Marsden--Weinstein symplectic structures on the spaces of knots and membranes.  
 We discuss their relation to the symplectic forms of the coadjoint orbits in $\mathfrak{dc}^*$ and the framework  of the Madelung transform, 
as well as to its prequantization discussed in different terms in \cite{ChIs2025}.

\medskip


\newcommand{\acknowledgement}{{\bf Acknowledgements.} The authors would like to thank Anton Izosimov and Ood Shabtai for useful discussions.
The work of BK was partially supported by an NSERC Discovery Grant.
This work of KM was supported by the Swedish Research Council (grant number 2022-03453), the Knut and Alice Wallenberg Foundation (grant numbers WAF2019.0201 and KAW2020.0287), and the Göran Gustafsson Foundation for Research in Natural Sciences and Medicine.
}

\acknowledgement


\section{Wave functions without zeros}\label{sect:no-zeros}

Let $M$ be a connected $n$-dimensional oriented manifold. Our two main objects of study are the space of smooth complex-valued wave functions on $M$ and a semi-direct product group of diffeomorphisms and circle-valued functions acting on them. 
 If the manifold $M$ is not compact one has to confine to the Schwartz space of fast decaying wave functions and, respectively, to diffeomorphisms that are close to the identity at infinity, to make all the integrals discussed below convergent.

 Geometrically, it is natural to think of wave functions $\psi$ as complex valued half-densities $\hd$.
 Throughout the text, we use both conventions, with the relation $\hd = \psi \sqrt{\mu}$ for a fixed background volume form $\mu$.
 Notice, however, that all the developments are independent of the choice of $\mu$ (cf.~\cite[Prop.~4.4]{KhMiMo2019}).

\subsection{The group action on wave functions}
Consider the {\it semi-direct product group} $\DC = \mathrm{Diff}(M)\ltimes C^\infty(M, S^1)$,
where $\mathrm{Diff}(M)$ here and below in this paper stands for the connected component containing the identity of the group of diffeomorphisms of $M$, and $S^1 \simeq \mathbb{R}/2\pi\mathbb{Z}\simeq \mathrm{U}(1)$ stands for the circle or, equivalently, the complex numbers of modulus $1$.\footnote{Notice that the extension part in the semi-direct product group is an infinite-dimensional torus $\mathbb{T}^\infty := C^\infty(M,S^1)$, which is important for the finite-dimensional analogue developed below.}
The {\it Lie algebra} of the group $\DC$ is $\mathfrak{dc} = \mathfrak{X}(M)\ltimes C^\infty(M,\mathbb{R})$. 

\begin{definition}
The group $\DC$ acts on the space of smooth wave functions $C^\infty(M,\mathbb{C})$ via 
\begin{equation*}
    (\varphi,a)\cdot \psi  = \left(\psi\circ \varphi^{-1} \right) \mathrm{Jac}_\mu(\varphi^{-1})^{1/2} \operatorname{exp}(-\mathrm i a)
\end{equation*}    
which is a natural action on \emph{half-densities}:
\begin{equation}
    (\varphi,a)\cdot \hd = \exp(-\mathrm i a) \varphi_*\hd .
\end{equation}
\end{definition}

This action is Hamiltonian with respect to the symplectic structure on $C^\infty(M,\mathbb{C})$,
\begin{equation*}
    \mathrm i\,\mathrm d\psi\wedge \mathrm d\bar\psi = 2 \,\mathrm{Im}\langle \cdot_1,\cdot_2\rangle_{L^2},
\end{equation*}
where $\langle \cdot_1,\cdot_2\rangle_{L^2}$ is the standard Hermitian inner product on wave functions.
Fusca~\cite{Fu2017} proved that the corresponding momentum map $\Mom\colon C^\infty(M, \mathbb C) \to \mathfrak{dc}^* $ is the (inverse) Madelung transform.
Below we define this momentum map and study its properties in detail.

\medskip

Before we describe the map $\Mom$, we recall the structure of the dual $\mathfrak{dc}^* $ 
and introduce coordinates on it.
First note that  the smooth dual $\mathfrak{X}^*(M)$ of the Lie algebra $\mathfrak{X}(M)$
can be identified with the space $\Omega^1(M)\otimes \Omega^n(M)$, where $M$ is of dimension $n$ and the tensor product $\otimes$ is over $C^\infty(M)$. Respectively, the smooth dual $\mathfrak{dc}^*$ of the semidirect-product Lie algebra 
$\mathfrak{dc} = \mathfrak{X}(M)\ltimes C^\infty(M,\mathbb{R})$ is 
$\left(\Omega^1(M)\otimes \Omega^n(M)\right)\oplus \Omega^n(M)$. The pairing is defined as follows: for 
$(v, f)\in \mathfrak{X}(M)\ltimes C^\infty(M,\mathbb{R})$ and 
$(\nu, \varrho)\in\Omega^1(M)\otimes \Omega^n(M)\oplus \Omega^n(M)$ one sets
$$
\langle (\nu, \varrho), (v, f)\rangle :=
\int_M \iota_v \nu  +\int_M f\,\varrho\,.
$$
Below we will also use another coordinate system on an open subset 
of $\mathfrak{dc}^*$ with nonvanishing densities $\varrho$. 
Namely, whenever $\varrho\not=0 $ on $M$ it will be convenient 
to write the elements of $\mathfrak{dc}^*$ in the form 
$(\alpha\otimes\varrho, \varrho)$. 
The coadjoint orbit of $(\nu,\varrho)$ is denoted
$\mathcal{O}_{(\nu, \varrho)}\subset \mathfrak{dc}^*$.

\begin{remark}
    A natural functional-analytic setting for the results presented in this paper is that of tame Fr\'echet spaces, 
    cf.~Hamilton~\cite{Ha1982}. An alternative setting for groups of diffeomorphisms deals with Sobolev $H^s$ completions 
    (or any reasonably strong Banach topology) 
    of the corresponding function spaces~\cite{EbMa1970}. 
    If $s>\dim{M}/2 +1$ then the Sobolev completions of the diffeomorphism groups 
    $\mathrm{Diff}^s(M)$ and $\mathrm{Diff}_\mu^s(M)$ are smooth Hilbert manifolds but not Banach Lie groups since, 
    for example, the left multiplication and the inversion maps are not uniformly continuous in the $H^s$ topology. 

    On the other hand, both $\mathrm{Diff}(M)$ and $\mathrm{Diff}_\mu(M)$ can be equipped with the structure of 
    tame Fr\'echet Lie groups. 
    In this setting, $\mathrm{Diff}_\mu(M)$ becomes a closed tame Lie subgroup 
    of $\mathrm{Diff}(M)$ which can be viewed as a tame principal bundle over the quotient space 
    $\Dens(M) = \mathrm{Diff}(M)/\mathrm{Diff}_\mu(M)$ 
    of either left or right cosets. 
    Furthermore, the tangent bundle $T\mathrm{Diff}(M)$ over $\mathrm{Diff}(M)$ is also a tame manifold. 
    However, since the dual of a Fr\'echet space, which itself is not a Banach space, is never a Fr\'echet space, 
    to avoid working with currents on $M$ it is expedient to restrict to a suitable subset of the (full) cotangent bundle 
    over $\mathrm{Diff}(M)$. 

    More precisely, consider the tensor product $T^\ast M \otimes \Lambda^n M$ of the cotangent bundle and 
    the vector bundle of $n$-forms on $M$ and define another bundle over $\mathrm{Diff}(M)$ 
    whose fibre over $\varphi \in \mathrm{Diff}(M)$ is the space of smooth sections of the pullback bundle 
    $\varphi^{-1}(T^\ast M \otimes \Lambda^n M)$ over $M$. 
    We will refer to this object as (the smooth part of) the cotangent bundle of $\mathrm{Diff}(M)$ 
    and denote it also by $T^\ast \mathrm{Diff}(M)$. 
    We will write $\mathfrak{X}^*(M) = T^*_\mathrm{id}\mathrm{Diff}(M)$ and $\mathfrak{X}^{**}(M) = \mathfrak{X}(M)$.
    Throughout the paper we will assume that derivatives of various Hamiltonian functions 
    can be viewed as maps to the smooth cotangent bundle of the phase space.
\end{remark}

Now we are ready to describe the momentum map $\Mom$. 

\begin{theorem}[\cite{Fu2017}]
    The momentum map $\Mom$ for the action of the group $\DC$ on the space of wave functions 
    $ C^\infty(M,\mathbb{C})$ has the following explicit form:
\begin{equation*}
    \Mom\colon C^\infty(M,\mathbb{C}) \to \mathfrak{dc}^* , \qquad
    \hd \mapsto \Big( \operatorname{Im}(\bar\hd \, \mathrm d\hd), \bar\hd \hd \Big) .
\end{equation*}
\end{theorem}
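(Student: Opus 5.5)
The plan is to verify the defining identity of a momentum map. For each Lie algebra element $\xi=(v,f)\in\mathfrak{dc}$, the function $H_\xi(\hd)=\langle \Mom(\hd),(v,f)\rangle$ should be a Hamiltonian for the infinitesimal action $\xi_{C^\infty}(\hd)$. With the symplectic form $\omega(\dot\hd_1,\dot\hd_2)=2\operatorname{Im}\langle\dot\hd_1,\dot\hd_2\rangle_{L^2}$, this means checking
\[
\mathrm dH_\xi(\hd)\cdot\delta\hd=\omega\bigl(\xi_{C^\infty}(\hd),\delta\hd\bigr)
\]
for all variations $\delta\hd$. The sign convention will be fixed so that it matches the one giving the stated formula. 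Since $\mathfrak{dc}$ is a semidirect product, the two components are handled separately and then added: the pure phase part $(0,f)$ and the diffeomorphism part $(v,0)$. Finally I will check equivariance.

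\emph{Phase part.} Differentiating $(\mathrm{id},tf)\cdot\hd=\exp(-\mathrm i tf)\hd$ at $t=0$ gives the generator $-\mathrm i f\hd$. Then
\[
\omega(-\mathrm i f\hd,\delta\hd)=2\operatorname{Im}\int\overline{(-\mathrm i f\hd)}\,\delta\hd=2\operatorname{Re}\int f\bar\hd\,\delta\hd .
\]
This equals the variation of $\int f\,\bar\hd\hd$. So the $\Omega^n$-component of $\Mom$ is $\bar\hd\hd=|\psi|^2\mu$, the density.

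\emph{Diffeomorphism part.} For $\varphi_t$ the flow of $v$, $\frac{d}{dt}(\varphi_t)_*\hd=-L_v\hd$. Here $L_v$ is the Lie derivative on half-densities; in the $\psi$ picture it reads $v\cdot\nabla\psi+\tfrac12(\operatorname{div}_\mu v)\psi$. A candidate Hamiltonian is $H_v=\int\iota_v\operatorname{Im}(\bar\hd\,\mathrm d\hd)$, where $\bar\hd\,\mathrm d\hd$ means the density-valued one-form $\bar\psi\,\mathrm d\psi\otimes\mu$. This is well defined independently of $\mu$, because the term coming from $\mathrm d\sqrt\mu$ is real and drops out under $\operatorname{Im}$.

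To compute its variation I will vary $\hd$ and integrate by parts. Write $H_v=\operatorname{Im}\int\bar\hd\,(L_v\hd)$, using that $\operatorname{Im}$ kills the real divergence term $\tfrac12\operatorname{div}(v)|\psi|^2$. The key fact is that $L_v$ is skew-adjoint on half-densities: $\int\bar a\,L_vb=-\int\overline{L_va}\,b$, because $\bar a b$ is a density and $\int L_v(\bar ab)=0$. Hence
\[
\delta H_v=\operatorname{Im}\int\bigl(\overline{\delta\hd}\,L_v\hd+\bar\hd\,L_v\delta\hd\bigr)=2\operatorname{Im}\int\overline{\delta\hd}\,L_v\hd .
\]
This matches $\omega(-L_v\hd,\delta\hd)$ up to the global sign convention. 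Adding the two parts gives the stated formula, linear in $(v,f)$.

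\emph{Equivariance.} This follows from naturality. $\Mom$ is built from $\hd$ by operations commuting with pushforward: $\varphi_*$ commutes with $\mathrm d$ and with products. A phase $e^{-\mathrm i a}$ leaves $\bar\hd\hd$ unchanged and shifts $\operatorname{Im}(\bar\hd\,\mathrm d\hd)$ by $-\mathrm da\otimes\bar\hd\hd$. This is exactly the coadjoint action of $C^\infty(M,S^1)$ on $\mathfrak{dc}^*$.

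I expect the main obstacle to be bookkeeping rather than substance. One has to keep the signs consistent among the left action, the infinitesimal generators, the factor $2$ in $\omega$, and the semidirect coadjoint action. One also has to justify the integration by parts: $M$ compact, or Schwartz-class decay with diffeomorphisms trivial at infinity, as assumed.
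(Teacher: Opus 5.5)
Your proposal is correct and takes the same route as the paper. Both verify directly that $H_{(v,f)}(\hd)=\langle\Mom(\hd),(v,f)\rangle$ has variation $2\,\mathrm{Im}\langle -L_v\hd-\mathrm i f\hd,\delta\hd\rangle_{L^2}$; your skew-adjointness of $L_v$ on half-densities plays the role of the paper's integration by parts against $\mu$. Your hedge about a sign convention is unnecessary, since with $\mathrm dH_\xi=\omega(\xi_{C^\infty},\cdot)$ both the phase and the diffeomorphism components match exactly. The equivariance check is an addition: the paper only states equivariance after its proof.
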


Note that the operation $\mathrm{Im}(\overline{\hd}\,\mathrm d\hd)$ for $\hd = \psi\sqrt{\mu}$ is well-defined on half-densities, see \cite{KhMiMo2019}.

\begin{proof}
    The action of $(v,f)\in\mathfrak{dc}$ on the half-density $\hd=\psi\sqrt{\mu}$ is $-L_v(\psi\sqrt{\mu}) - \mathrm i f \psi\sqrt{\mu}$.
    Take the Hamiltonian 
    \begin{equation*}
        H_{(v,f)}(\psi) = \int_M \mathrm{Im}(\bar\psi \iota_v \mathrm d\psi)\mu + \int_M f \bar\psi\psi \, \mu.
    \end{equation*}
    We then have, with $\psi' \coloneqq \frac{d}{d\epsilon}\psi_\epsilon$,
    \begin{align*}
        \frac{d}{d\epsilon}H_{(v,f)}(\psi_\epsilon) &= \mathrm{Im}\int_M (\bar\psi' \mu L_v \psi + \bar\psi \mu L_v \psi') + 2\, \mathrm{Re}\int_M f \bar\psi \psi' = 
        \\ &= 2\, \mathrm{Im}\int_M \bar\psi' (\mu L_v \psi + \frac{1}{2}\psi L_v \mu) + 2\, \mathrm{Im}\int_M \mathrm i f \bar\psi\psi' \mu = 
        \\ &= 2\, \mathrm{Im}\int_M ( -L_v \bar\psi\sqrt{\mu} + \mathrm i f \bar\psi)\psi'\sqrt{\mu} 
        = 2\,\mathrm{Im}\langle (-L_v\psi-\mathrm i f \psi)\sqrt{\mu},\psi'\sqrt{\mu}\rangle_{L^2},
    \end{align*}
    which proves that the Hamiltonian vector field is the infinitesimal action.
\end{proof}


This momentum map is equivariant, namely 
\begin{equation*}
    \Mom\big((\varphi,a)\cdot\psi\big) = \mathrm{Ad}^*_{(\varphi,a)}\big(\Mom(\psi)\big)
\end{equation*}
where
\begin{equation*}
    \mathrm{Ad}^*_{(\varphi,a)}(\nu, \varrho) = \big(\varphi_*\nu - \mathrm d a \otimes \varphi_*\varrho, \; \varphi_*\varrho\big).
\end{equation*}
The latter means, in particular, that $\DC$-orbits in $C^\infty(M,\mathbb{C})$ are mapped to coadjoint orbits in $\mathfrak{dc}^*$.
However, the momentum map $\Mom$ is not a submersion, not all coadjoint orbits are obtained in the image.
\medskip

Our first main goal will be {\it to describe which coadjoint orbits in $\mathfrak{dc}^*$ are contained in the image of the momentum map~$\Mom$}. 
This seemingly modest task will allow us not only interpret the Madelung transform as a geometric prequantization of those coadjoint orbits, but also to establish its Kähler properties, previously known from a different perspective, and to give answers to Wallstrom's objections to the hydrodynamical form of the quantum mechanics.
As we mentioned in the introduction,  we confine to {\it normalized wave functions}, i.e., wave functions belonging to the unit $L^2$-sphere
 $S^\infty_1=\{\psi\in C^\infty(M, \mathbb C)~|~\int_M|\hd|^2=1\}$.

In this section we consider  the case of normalized {\it nonvanishing wave functions},  
$$
\psi \in \WFnonzero \coloneqq C^\infty(M,\mathbb{C}^*)\cap S^\infty_1\,,
$$
where $\mathbb{C}^*=\mathbb{C}\backslash \{ 0\}$.

\subsection{Topology of the action}
Before we describe symplectic features  of the momentum map, we give an account of  topological properties of the $\DC$-group action on wave functions.


\begin{theorem}\label{thm:non-zero-trans}
The connected components $\WFnonzero^{cc}$ of the space $\WFnonzero$ of normalized nonvanishing wave functions are
enumerated by the elements of $H^1(M,2\pi\mathbb{Z})$. 
The action of the group $\DC = \mathrm{Diff}(M)\ltimes C^\infty(M, S^1)$
on half-densities is transitive on each connected component of $\WFnonzero$.
\end{theorem}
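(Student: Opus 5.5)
Write $\psi=\sqrt{\rho}\,u$ with $\rho=|\psi|^2>0$ and $u=\psi/|\psi|\in C^\infty(M,S^1)$. This polar decomposition is a homeomorphism
\[
\WFnonzero\simeq \{\rho>0,\ \textstyle\int\rho\mu=1\}\times C^\infty(M,S^1).
\]
The first factor is convex, hence contractible, so the connected components of $\WFnonzero$ are exactly those of $C^\infty(M,S^1)$.

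For the components of $C^\infty(M,S^1)$, I will use the standard fact that they are classified by the homotopy class $[u]\in[M,S^1]=H^1(M,\mathbb Z)$. Concretely, the class is the de Rham class of $u^{-1}du/\mathrm i$, whose periods lie in $2\pi\mathbb Z$; this identifies the class with an element of $H^1(M,2\pi\mathbb Z)$. Two maps with the same class differ by a map $u_1/u_0$ of trivial degree. Such a map lifts to a real function, $u_1/u_0=e^{\mathrm i h}$, and $u_t=u_0e^{\mathrm i th}$ joins them by a smooth path. Conversely, every class in $H^1(M,2\pi\mathbb Z)$ is realized: choose a closed form $\beta$ with those periods and set $u=\exp(\mathrm i\int^x\beta)$, which is well defined because the periods lie in $2\pi\mathbb Z$. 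This gives the enumeration.

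For transitivity, fix two wave functions $\psi_0,\psi_1$ in the same component. First, by Moser's theorem on the compact connected manifold $M$ (or with decay conditions in the noncompact case), there is $\varphi\in\mathrm{Diff}(M)$, isotopic to the identity, with $\varphi_*(\rho_0\mu)=\rho_1\mu$. Since both densities are positive with total mass $1$, the linear path $\rho_t$ and the vector field solving $\operatorname{div}(\rho_t v_t)=-\dot\rho_t$ produce it. Because the action on half-densities is $\varphi_*\hd$, the modulus of $\varphi\cdot\psi_0$ is $\sqrt{\rho_1}$. Its phase is $u_0\circ\varphi^{-1}$, which is homotopic to $u_0$ since $\varphi$ is isotopic to the identity; so it lies in the same degree class as $u_1$. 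The ratio $u_1/(u_0\circ\varphi^{-1})$ therefore lifts to $e^{-\mathrm i a}$ for some real $a$. Multiplying by this phase, which is the action of $(\mathrm{id},a)$, yields $\psi_1$. The composition $(\mathrm{id},a)(\varphi,0)\in\DC$ thus maps $\psi_0$ to $\psi_1$.

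The main obstacle is ensuring that the component classification survives the diffeomorphism step. This rests on $\mathrm{Diff}(M)$ being the identity component, which gives $u\circ\varphi^{-1}\simeq u$; without it, a diffeomorphism acting nontrivially on $H^1$ could mix components. A secondary technical point is the lifting of degree-zero circle maps, $u=e^{\mathrm i h}$, with $h$ smooth. This follows because $u^{-1}du$ is exact exactly when all its periods vanish, i.e.\ exactly when the class of $u$ is trivial.
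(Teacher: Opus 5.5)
Your proof is correct and follows essentially the same route as the paper. You reduce $\pi_0(\WFnonzero)$ to $\pi_0\, C^\infty(M,S^1)\cong H^1(M,2\pi\mathbb Z)$ via the polar decomposition, then use Moser's lemma for the densities and a real phase correction $a$. The only differences are cosmetic: the paper matches the closed 1-forms, $\alpha(1)=\varphi_*\alpha(0)-\mathrm d a$, whereas you match the phase maps directly, and your explicit use of $\varphi$ being isotopic to the identity (so that the class of $u_0\circ\varphi^{-1}$ equals that of $u_0$) makes precise a step the paper leaves implicit.
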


The proof is a combination of several lemmas. 

\begin{lemma}\label{lem:closed-alpha}
    Given $\psi \in \WFnonzero$, define the 1-form $\alpha= \frac{\mathrm i}{2} (\frac{\mathrm d\bar\psi}{\bar\psi}-\frac{\mathrm d\psi}{\psi} )$. Then $\alpha$ 
    is a closed real 1-form on $M$. Furthermore, for $\psi$ written ``in polar coordinates" as
    $\psi = \sqrt{\rho \mathrm{e}^{\mathrm i \theta}}$, we locally have that $\alpha = \mathrm d \theta/2$.
\end{lemma}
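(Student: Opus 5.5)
The plan is a direct computation in three steps: reality, a local polar form, and closedness.

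\emph{Reality.} Write $\beta = \mathrm d\psi/\psi$, which is a smooth complex 1-form on $M$ because $\psi$ has no zeros. Then $\mathrm d\bar\psi/\bar\psi = \bar\beta$, so
\[
\alpha = \tfrac{\mathrm i}{2}(\bar\beta - \beta) = \tfrac{\mathrm i}{2}\cdot(-2\mathrm i\,\mathrm{Im}\,\beta) = \mathrm{Im}\,\beta = \mathrm{Im}\Big(\frac{\mathrm d\psi}{\psi}\Big).
\]
Hence $\alpha$ is a globally defined smooth real 1-form.

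\emph{Local polar form.} On any contractible (or simply connected) open set $U\subset M$, the map $\psi\colon U\to\mathbb C^*$ lifts through the covering $\exp\colon\mathbb C\to\mathbb C^*$. So there is a smooth $\log\psi = \tfrac12\log\rho + \tfrac{\mathrm i}{2}\theta$ on $U$, with $\rho = |\psi|^2>0$ and $\theta$ real, such that $\psi = \sqrt{\rho\,\mathrm e^{\mathrm i\theta}}$ on $U$. Differentiating gives
\[
\frac{\mathrm d\psi}{\psi} = \mathrm d\log\psi = \frac{\mathrm d\rho}{2\rho} + \frac{\mathrm i}{2}\,\mathrm d\theta ,
\]
and taking imaginary parts yields $\alpha|_U = \mathrm d\theta/2$.

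\emph{Closedness.} Since $\alpha|_U = \mathrm d(\theta/2)$ is exact on each such $U$, we get $\mathrm d\alpha = 0$ on every $U$. These sets cover $M$, so $\alpha$ is closed on $M$. Alternatively, one can check directly that $\mathrm d(\mathrm d\psi/\psi) = -\,\mathrm d\psi\wedge\mathrm d\psi/\psi^2 = 0$ and then take imaginary parts.

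There is no genuine obstacle here. The only delicate point is that $\theta$ exists only locally: $\psi$ need not admit a global logarithm when $H^1(M)\neq 0$. That is exactly why the statement says ``locally'', and it is why $\alpha$ is closed but not necessarily exact. Its periods lie in $\pi\mathbb Z$, since $\oint \mathrm d\psi/\psi \in 2\pi\mathrm i\,\mathbb Z$, and this will feed into Theorem~\ref{thm:non-zero-trans}.
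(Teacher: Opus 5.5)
Your proof is correct and follows essentially the same computation as the paper: closedness comes from $\mathrm d(\mathrm d\psi/\psi)=0$, and the local identity $\alpha=\mathrm d\theta/2$ from $\mathrm d\psi/\psi=\tfrac12\,\mathrm d\rho/\rho+\tfrac{\mathrm i}{2}\,\mathrm d\theta$; you additionally justify reality and the local existence of $\theta$ via a lift of $\psi$ through $\exp$, which is welcome. One small slip in your closing aside: $\oint \mathrm d\psi/\psi\in 2\pi\mathrm i\,\mathbb Z$ gives periods of $\alpha=\mathrm{Im}(\mathrm d\psi/\psi)$ in $2\pi\mathbb Z$, not merely $\pi\mathbb Z$, and this is exactly the class $H^1(M,2\pi\mathbb Z)$ used in the next lemma.
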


\begin{proof}
 From definition of $\alpha$ we immediately have $\mathrm{d} \alpha =  0$.
    If $\psi = \sqrt{\rho \mathrm e^{\mathrm i\theta}}$, then $\rho=\bar\psi \psi$ and hence locally
    $\mathrm d \psi = \frac{1}{2\psi} (\mathrm{e}^{\mathrm i\theta} \mathrm d \rho + \mathrm{i}\rho \mathrm{e}^{\mathrm i \theta}\mathrm d \theta) = \frac{\psi}{2}(\mathrm d\rho/\rho + \mathrm i \,\mathrm d \theta)$.
    Similarly, $\mathrm d\bar\psi = \frac{\bar\psi}{2}(\mathrm d\rho/\rho-\mathrm i \,\mathrm d\theta)$.
    Then it follows that $\alpha = \mathrm d \theta/2$.    
\end{proof}

\begin{lemma}\label{lem:integral-homology}
     Given $\psi \in \WFnonzero$, the cohomology class $[\alpha]$ of the closed 1-form $\alpha$ belongs to $H^1(M,2\pi\mathbb{Z})$: this integer cohomology class counts the rotation number for the phase of $\psi$ over any nontrivial loop on $M$.
\end{lemma}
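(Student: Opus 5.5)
The plan is to compute the periods of $\alpha$ over closed loops directly and identify them with winding numbers of $\psi$. Since $\alpha$ is closed (Lemma~\ref{lem:closed-alpha}), its integral over a loop $\ell\colon S^1\to M$ depends only on the homology class of $\ell$. So it suffices to show that $\oint_\ell\alpha\in 2\pi\mathbb Z$ for every smooth loop $\ell$, and to identify this number with the rotation number of the phase. By de Rham's theorem, a closed form all of whose periods lie in $2\pi\mathbb Z$ defines a class in the image of $H^1(M,2\pi\mathbb Z)\to H^1(M,\mathbb R)$. This is what the statement means by $[\alpha]\in H^1(M,2\pi\mathbb Z)$.

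First I rewrite $\alpha$ without square roots. Since $\frac{\mathrm d\bar\psi}{\bar\psi}=\overline{\frac{\mathrm d\psi}{\psi}}$, we get
$$\alpha=\tfrac{\mathrm i}{2}\Big(\overline{\tfrac{\mathrm d\psi}{\psi}}-\tfrac{\mathrm d\psi}{\psi}\Big)=\operatorname{Im}\tfrac{\mathrm d\psi}{\psi}.$$
Equivalently, $\alpha=\psi^*\big(\operatorname{Im}\tfrac{\mathrm dz}{z}\big)=\psi^*\mathrm d\arg z$, where $\psi\colon M\to\mathbb C^*$.

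Next I pull back along the loop: $\oint_\ell\alpha=\oint_{\psi\circ\ell}\mathrm d\arg z$. For the closed curve $\psi\circ\ell$ in $\mathbb C^*$, this is $2\pi$ times its winding number about $0$. To see this concretely, lift $\psi\circ\ell(t)=r(t)e^{\mathrm i\phi(t)}$ with continuous $\phi$ on $[0,1]$. Then $\operatorname{Im}(\mathrm d\psi/\psi)$ restricts to $\mathrm d\phi$, so the integral equals $\phi(1)-\phi(0)\in2\pi\mathbb Z$. Thus $\oint_\ell\alpha/2\pi=\deg(\psi\circ\ell)$. Put differently, $[\alpha]/2\pi=\psi^*[\mathrm d\theta/2\pi]$ is the pullback of the generator of $H^1(\mathbb C^*,\mathbb Z)=H^1(S^1,\mathbb Z)$ under the homotopy class of $\psi/|\psi|\colon M\to S^1$. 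This also explains the phrase ``rotation number of the phase''.

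The only subtle point is the normalization relative to Lemma~\ref{lem:closed-alpha}, where locally $\alpha=\mathrm d\theta/2$ with $\psi=\sqrt{\rho e^{\mathrm i\theta}}$. Here $\theta$ is the phase of $\psi^2$, so its periods lie in $4\pi\mathbb Z$ and those of $\alpha=\mathrm d\theta/2$ lie in $2\pi\mathbb Z$. This is consistent with the direct computation above; I would remark on it so that the factor of $2$ causes no confusion. No real obstacle is expected beyond this bookkeeping and the lifting argument.
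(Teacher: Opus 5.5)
Your proof is correct and uses the same idea as the paper's: the period of $\alpha$ over a loop is the net change of the argument of $\psi$ along it, and single-valuedness of $\psi$ forces this into $2\pi\mathbb Z$. The paper instead reconstructs $\psi=\sqrt{\rho}\,e^{\mathrm i\theta/2}$ from the multivalued primitive $\theta=2\int\alpha$ and observes that it is univalued iff the periods of $\alpha$ lie in $2\pi\mathbb Z$, an equivalence reused for the converse in Theorem~\ref{thm:no-zeros}(b); your pullback $\alpha=\psi^*\mathrm d\arg z$ and path-lifting prove only the stated direction, but do so more explicitly and also give the rotation-number identification $\oint_\ell\alpha=2\pi\deg(\psi\circ\ell)$.
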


\begin{proof}
    Fix an arbitrary $x_0 \in M$.
    We can now construct a multivalued function $\theta$ on $M$ by setting $\theta(x) = 2\int_{\ell}\alpha$ where $\ell$ is a curve such that $\ell(0) = x_0$ and $\ell(1) = x$. 
    Since $\alpha$ is a closed 1-form, but not necessarily exact, the integral is the same on  homotopic paths, while the existence of noncontractible curves may lead to $\theta$ being multivalued. If $M$ has trivial first cohomology, the function $\theta$ is univalued.
For an arbitrary $M$ the multivaludeness of $\theta$ is defined by periods of $\alpha$:
one has $\int_\ell \alpha - \int_{\tilde\ell}\alpha = \int_{\ell-\tilde\ell} \alpha$. 

Now define a complex-valued function $\psi$ on $M$ by setting 
$\psi := \sqrt{\rho \mathrm e^{\mathrm i \theta}}=\sqrt{\rho }\mathrm e^{\mathrm i \theta/2}$. 
The function $\psi$ is univalued on $M$ if and only if the periods of $\alpha$ belong to $2\pi \mathbb Z$, i.e., $[\alpha] \in H^1(M,2\pi\mathbb{Z})$.
\end{proof}

\begin{remark}\label{rem:topological_proof}
Note that Lemma \ref{lem:integral-homology} is actually a purely topological statement equivalent to the following result:

\begin{proposition}
    $\pi_0(\WFnonzero)\cong H^1(M,2\pi\mathbb{Z})$.
\end{proposition}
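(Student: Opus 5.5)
The proof will go through the homotopy equivalence of $\WFnonzero$ with the mapping space $C^\infty(M,S^1)$, followed by the classical identification $[M,S^1]\cong H^1(M,\mathbb Z)$. Here $S^1=K(\mathbb Z,1)$, and the class is realized concretely by the closed form $\alpha$ of Lemma~\ref{lem:closed-alpha}.

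\textbf{Reduction to $C^\infty(M,S^1)$.} Write $\psi=|\psi|\,u$ with $u=\psi/|\psi|\in C^\infty(M,S^1)$. The map $\psi\mapsto (|\psi|,u)$ identifies $C^\infty(M,\mathbb C^*)$ with $C^\infty(M,\mathbb R_{>0})\times C^\infty(M,S^1)$. The first factor is convex, hence contractible. The normalization constraint only rescales $|\psi|$ by a positive constant, so the radial projection $\psi\mapsto\psi/\|\psi\|_{L^2}$ is a deformation retraction of $C^\infty(M,\mathbb C^*)$ onto $\WFnonzero$. It follows that $\pi_0(\WFnonzero)\cong\pi_0(C^\infty(M,S^1))$.

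\textbf{The invariant.} To $u$ we assign $[\tfrac{1}{\mathrm i}u^{-1}\mathrm du]\in H^1(M,2\pi\mathbb Z)$. Up to the factor $1/2$ coming from the convention $\psi=\sqrt{\rho\mathrm e^{\mathrm i\theta}}$, this is exactly the class $[\alpha]$ of Lemma~\ref{lem:integral-homology}; one should fix the normalization so that $2\alpha$, respectively $\alpha$, has periods in $2\pi\mathbb Z$, consistently with that lemma. Along a continuous path $\psi_t$ the periods of $\alpha_t$ vary continuously and lie in a discrete set, so they are constant. The map $\pi_0(\WFnonzero)\to H^1(M,2\pi\mathbb Z)$ is therefore well defined.

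\textbf{Injectivity.} Suppose $u_0,u_1$ have the same class. Then $u_1/u_0$ has all periods zero, so $\tfrac1{\mathrm i}(u_1/u_0)^{-1}\mathrm d(u_1/u_0)=\mathrm d f$ for a global real function $f$. Integrating, $u_1=c\,u_0\mathrm e^{\mathrm i f}$ with a constant $c\in S^1$. The path $u_t=c^t u_0\mathrm e^{\mathrm i tf}$ then connects $u_0$ to $u_1$. The main subtlety is this step: the integer periods must determine the homotopy class. The argument above handles it by lifting $u_1/u_0$ to $\mathbb R$, using the fact that $M$ is connected.

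\textbf{Surjectivity.} This is the step I expect to be the real obstacle. Given an integral class, represent it by a closed form $\beta$ with periods in $2\pi\mathbb Z$; de Rham's theorem gives such a representative for any class in the image of $H^1(M,\mathbb Z)\to H^1(M,\mathbb R)$. Fix $x_0\in M$ and set $u(x)=\exp(\mathrm i\int_{x_0}^x\beta)$, which is single-valued precisely because the periods lie in $2\pi\mathbb Z$; this is the same construction as in the proof of Lemma~\ref{lem:integral-homology}. Then $\psi=u/\sqrt{\mathrm{vol}_\mu(M)}$ lies in $\WFnonzero$ and realizes the class. The caveat is torsion: classes in $H^1(M,\mathbb Z)$ are always torsion-free, since $H^1(M,\mathbb Z)=\mathrm{Hom}(H_1(M),\mathbb Z)$, so periods capture everything and no information is lost. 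In the noncompact (Schwartz) setting, decay at infinity must be compatible with normalizing the modulus. I would handle this by taking $|\psi|$ to be an arbitrary positive Schwartz function, which does not affect the phase.
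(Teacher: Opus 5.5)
Your proof is correct. Its first step is the same reduction the paper makes, $\WFnonzero\simeq C^\infty(M,S^1)$: polar decomposition $\psi=|\psi|u$, contractibility of the modulus factor, and radial normalization. Where you genuinely differ is in computing $\pi_0(C^\infty(M,S^1))$.

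The paper uses the homotopy-theoretic fact that maps $M\to S^1$ are classified by the induced homomorphism $f_*\colon\pi_1(M)\to\pi_1(S^1)\cong\mathbb{Z}$ (in effect, $S^1=K(\mathbb{Z},1)$). It then identifies $\mathrm{Hom}(H_1(M,\mathbb{Z}),\mathbb{Z})$ with $H^1(M,\mathbb{Z})$ via the universal coefficient theorem. You instead give a self-contained de Rham argument:
\begin{itemize}
\item the invariant is the period class of $-\mathrm{i}\,u^{-1}\mathrm{d}u$;
\item injectivity comes from lifting $u_1/u_0$ to $\mathbb{R}$ once its logarithmic derivative is exact;
\item surjectivity comes from exponentiating $\int_{x_0}^x\beta$.
\end{itemize}

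The paper's route is shorter and purely topological. It does, however, treat $[M,S^1]\cong\mathrm{Hom}(\pi_1(M),\mathbb{Z})$ as a black box. It also tacitly identifies path components of the Fr\'echet space $C^\infty(M,S^1)$ with continuous homotopy classes, which requires a smoothing step.

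Your route has three advantages. It produces explicit smooth paths, and it needs only de Rham's theorem plus the torsion-freeness of $H^1(M,\mathbb{Z})$, which you correctly point out. It also exhibits the isomorphism concretely as $\psi\mapsto[\alpha]$. That is exactly the link to Lemma~\ref{lem:integral-homology} that the paper's remark asserts, and the same mechanism as the phase-correction step $\alpha(1)=\varphi_*\alpha(0)-\mathrm{d}a$ in the proof of Theorem~\ref{thm:non-zero-trans}.

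One small correction: there is no factor $1/2$ to reconcile. Since $u=\psi/|\psi|=\mathrm{e}^{\mathrm{i}\theta/2}$, a direct computation gives $\alpha=\tfrac{\mathrm{i}}{2}\bigl(\tfrac{\mathrm{d}\bar\psi}{\bar\psi}-\tfrac{\mathrm{d}\psi}{\psi}\bigr)=-\mathrm{i}\,u^{-1}\mathrm{d}u=\mathrm{d}\theta/2$ exactly. Your invariant is therefore literally $[\alpha]\in H^1(M,2\pi\mathbb{Z})$, with no renormalization needed.
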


\begin{proof}
The space $\WFnonzero $, as well as the space $ C^\infty(M,\mathbb{C}^*)$ of non-normalized smooth sections $\psi$ in 
the trivial $\mathbb{C}^*$-bundle over $M$, is homotopically equivalent to the space $C^\infty(M,S^1)$ of smooth sections $f:=\psi/|\psi|$ in $S^1$-bundle. The latter are classified by the homomorphisms $\pi_1(M)\to \pi_1(S^1)\cong \mathbb{Z}$.
Indeed, for any smooth map $f:M\rightarrow S^{1}$, its homotopy class corresponds to a unique element $[f]\in H^{1}(M,2\pi\mathbb{Z})$.

The connection is established by considering the induced map on the fundamental groups, $f_{*}:\pi_{1}(M)\rightarrow \pi_{1}(S^{1})\cong \mathbb{Z}$. 
The elements of 
$$
\text{Hom}(\pi _{1}(M),\mathbb{Z})=\text{Hom}(\pi_{1}(M)/[\pi_1, \pi_1],\mathbb{Z})=\text{Hom}(H_{1}(M,\mathbb{Z}),\mathbb{Z})
$$ 
are in bijection with the elements of $H^{1}(M,\mathbb{Z})\cong \text{Hom}(H_{1}(M,\mathbb{Z}),\mathbb{Z})$ by Poincar\'e duality and the universal coefficient theorem.
\end{proof}

As an example, if $M=S^{1}$, the maps are classified by the degree (winding number), which is an integer: $H^{1}(S^{1},\mathbb{Z})\cong \mathbb{Z}$. 
\end{remark}
\medskip

Now return to the proof of Theorem~\ref{thm:non-zero-trans}.
\begin{proof}
It remains to prove the transitivity part of the theorem, i.e., to show that any path $\psi(t)$  in  $\WFnonzero$ can be traced by the $\DC$-group action.

First note that for the nonvanishing  density $\varrho = |\hd|^2$ 
 the only invariant of the $\mathrm{Diff}(M)$-action  is its total volume, $\int_M \varrho=1$, as the consequence of Moser's lemma. Hence there exists $\varphi\in \mathrm{Diff}(M)$ such that $\varrho(1) = \varphi_*\varrho(0)$. As before, we associate the family of 1-forms $\alpha(t)$ to the family of wave functions $\psi(t)$. 
    It remains to find $a\in C^\infty(M,\mathbb{R})$ such that $\alpha(1) = \varphi_*\alpha(0) - \mathrm{d}a$.
    This is possible if and only if $\alpha(0)$ and $\alpha(1)$ have the same cohomology class $[\alpha(0)] = [\alpha(1)]$, which in turn is true since they are homotopic via $t\mapsto \alpha(t)$, while $[\alpha]$ must remain in the same integer class of $H^1(M,2\pi\mathbb{Z})$.
    This proves that the action is transitive on the connected components of $\WFnonzero$.
\end{proof}

\begin{figure}
    \centering
    \includegraphics[scale=0.8]{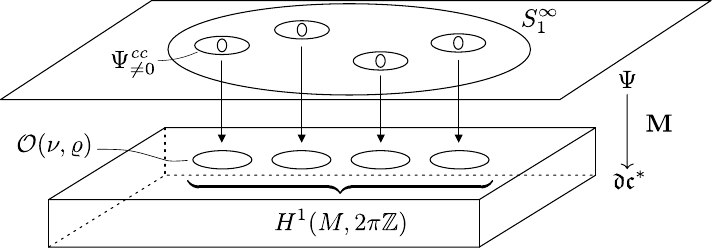}
    \caption{The $\Mom$-projection maps connected components $\WFnonzero^{cc}$ of normalized nonvanishing wave functions in     $ \Psi = C^\infty(M,\mathbb C)$ to coadjoint orbits in $\mathfrak{dc}^*$. The latter  are enumerated by elements of $H^1(M, 2\pi\mathbb Z)$. These prequantum $S^1$-bundles over orbits can be thought of as infinite-dimensional analogues of the Hopf fibration.
 }\label{fig:gen_hopf}
\end{figure}

\subsection{Symplectic geometry of the action}
Now we return to the momentum map $\Mom$
of the semidirect product group $\DC$ given by 
$\Mom\colon \psi \mapsto ( \operatorname{Im}(\bar\hd \, \mathrm d\hd), \bar\hd \hd )$.
The following theorem is strengthening  the results in~\cite{KhMiMo2019} on the Madelung transform of nonvanishing functions, which dealt with the case of a simply connected manifold $M$. 
Now let $M$ be an arbitrary $n$-dimensional oriented manifold. 

\begin{theorem}\label{thm:no-zeros}
Let $(\nu,\varrho) = (\alpha\otimes \varrho,\varrho)$ be an element in $\mathfrak{dc}^*$ with $\varrho >0$ and let $\mathcal{O}_{(\nu, \varrho)}\subset \mathfrak{dc}^*$ be its coadjoint orbit. 
Then the following statements hold:

\begin{enumerate}[label=(\alph*),leftmargin=*]
\item The orbits $\mathcal{O}_{(\nu, \varrho)}$ in the image of $\WFnonzero$ under the momentum map $\Mom$ are in 1-1 correspondence with the 
connected components of $\WFnonzero$ 
enumerated by elements of $H^1(M,2\pi\mathbb{Z})$;

\item 
The orbit $\mathcal{O}_{(\nu, \varrho)}$
belongs to the image 
if and only if 
$\alpha$ is a closed 1-form on $M$ of integer cohomology class,  
 $[\alpha]\in H^1(M,2\pi\mathbb{Z})$;

\item Each connected component $ \WFnonzero^{cc} $ of $ \WFnonzero $ of nonvanishing wave functions $\psi$ in the unit sphere is mapped by $\Phi$ to a coadjoint orbit $\mathcal{O}_{(\nu, \varrho)}$ with the normalization condition $\int_M \varrho=1$. Each of these coadjoint orbits can be understood as an open part of the infinite-dimensional projective space $\mathbb{P}\WFnonzero$ with Fubini--Study symplectic form, and it is naturally symplectomorphic to $T^*\Dens(M)$, where the space of densities is 
$$
\Dens(M)\coloneqq\{\varrho\in \Omega^n(M)\mid\varrho>0, \, \int_M \varrho=1\} \,;
$$

\item Each connected component $ \WFnonzero^{cc} $  is an $S^1$-bundle with the base being the corresponding coadjoint orbit 
$\mathcal{O}_{(\nu, \varrho)}$ and whose bundle projection is the map 
$$
\Mom\colon \WFnonzero^{cc} \to \mathcal{O}_{(\nu, \varrho)}\,.
$$ 
This is a prequantum bundle  for each coadjoint orbit $\mathcal{O}_{(\nu, \varrho)}$
with $[\alpha]\in H^1(M, 2\pi\mathbb{Z})$; 

\item The 1-form $\lambda$ defined by 
\[
    \lambda_\psi[\dot\psi] = \mathrm{Im}\int_M \overline{\psi}\dot\psi
\]
can be regarded as a Liouville 1-form on this prequantum bundle;

\item The bundle projection $\Mom$ is a Riemannian submersion of the $L^2$ metric on $\WFnonzero$ to (the extension of) the 
Sasaki--Fisher--Rao metric on the orbits $\mathcal{O}_{(\nu, \varrho)}$, as  explained below.
\end{enumerate}
\end{theorem}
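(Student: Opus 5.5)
The plan is to reduce everything to the coadjoint action formula and to Theorem~\ref{thm:non-zero-trans}, and then to read off the symplectic and metric statements from the general theory of momentum maps for Hamiltonian actions.

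\textbf{Parts (a) and (b).} First I compute $\Mom(\psi)$ for nonvanishing $\psi$. Writing $\hd=\psi\sqrt\mu$, we get $\operatorname{Im}(\bar\hd\,\mathrm d\hd)=\operatorname{Im}(\bar\psi\,\mathrm d\psi)\otimes\mu=\alpha'\otimes\varrho$, where $\varrho=|\psi|^2\mu$ and $\alpha'=\operatorname{Im}(\mathrm d\psi/\psi)$. Up to the sign and normalization conventions, this $\alpha'$ is the closed form of Lemma~\ref{lem:closed-alpha}. By Lemma~\ref{lem:integral-homology} its class lies in $H^1(M,2\pi\mathbb Z)$, which gives the necessity in (b).

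For sufficiency, take $(\alpha\otimes\varrho,\varrho)$ with $\alpha$ closed and integral. As in the proof of Lemma~\ref{lem:integral-homology}, integrate $\alpha$ to a multivalued phase $\theta$. Integrality makes $\psi=\sqrt{\rho}\,e^{\mathrm i\theta/2}$ univalued, and it satisfies $\Mom(\psi)=(\alpha\otimes\varrho,\varrho)$.

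Next I classify the orbits. Moser's lemma lets us move $\varrho$ by $\mathrm{Diff}(M)$, and the coadjoint formula $\mathrm{Ad}^*_{(\varphi,a)}$ changes $\alpha$ by $\varphi_*$ and by exact forms $\mathrm da$. Since $\varphi$ is isotopic to the identity, $\varphi_*$ preserves cohomology classes. So the orbits with $\varrho>0$ and closed $\alpha$ are labelled exactly by $([\alpha],\int\varrho)$. Combined with equivariance of $\Mom$ and the transitivity statement of Theorem~\ref{thm:non-zero-trans}, each component $\WFnonzero^{cc}$ maps onto a single orbit. Distinct components go to distinct orbits because $[\alpha]$ is the component label. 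This proves (a).

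\textbf{Parts (c), (d) and (e).} I compute the fibres of $\Mom$ restricted to a component. If $\Mom(\psi_1)=\Mom(\psi_2)$, then $|\psi_1|=|\psi_2|$ and the phases have equal differentials, so $\psi_2=e^{\mathrm ic}\psi_1$ with $c$ a constant (using that $M$ is connected). Hence the fibres are the global $S^1$-orbits, and $\Mom$ factors through a bijection $\mathbb P\WFnonzero^{cc}\to\mathcal O$.

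To see that this bijection is a symplectomorphism, I use the standard fact that for an equivariant momentum map whose restriction to a group orbit has fibres equal to the kernel directions, the pullback of the KKS form equals the restriction of the ambient symplectic form. Concretely, for $\xi,\eta\in\mathfrak{dc}$ we have
\[
\omega(\xi_\psi,\eta_\psi)=\langle\Mom(\psi),[\xi,\eta]\rangle,
\]
and tangent vectors to $\WFnonzero^{cc}$ are of the form $\xi_\psi$ by transitivity. The restriction of $\omega$ to the unit sphere, quotiented by $S^1$, is the Fubini--Study form, which gives (c). The identification with $T^*\Dens(M)$ then comes from the coordinates $(\varrho,[\theta])$ relative to a fixed representative $\alpha_0$ of $[\alpha]$, writing $\alpha=\alpha_0+\mathrm d\theta/2$. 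I would cite \cite{KhMiMo2019} for the canonical form in these coordinates.

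For (d) and (e), I use that $\mathrm d\lambda$ restricted to the sphere equals (up to a factor) the symplectic form. The one-form $\lambda$ evaluates to $1$ on the generator $\mathrm i\psi$ of the fibre action (by normalization) and is $S^1$-invariant. So $\lambda$ is a connection form whose curvature is $\Mom^*\omega_{KKS}$, making $\WFnonzero^{cc}\to\mathcal O$ a prequantum bundle. Smoothness and local triviality of the bundle follow from transitivity together with the identification with $\mathbb P\WFnonzero^{cc}$.

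\textbf{Part (f).} The horizontal space is $\{\dot\psi:\operatorname{Re}\langle\mathrm i\psi,\dot\psi\rangle=0\}$. Writing $\dot\psi=\psi(\dot\rho/2\rho+\mathrm i\dot\theta/2)$, the $L^2$ norm splits into $\tfrac14\int(\dot\rho^2/\rho+\rho\dot\theta^2)\mu$. This is the Fisher--Rao term plus the Sasaki-type fibre term, with $\dot\theta$ normalized by horizontality. I would define the metric on $\mathcal O$ by this pushforward and check that it is well defined via the $S^1$-invariance of the $L^2$ metric.

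\textbf{Main obstacle.} I expect the main obstacle to be the non-simply-connected bookkeeping in (c): $\theta$ is multivalued, so the chart to $T^*\Dens(M)$ depends on the choice of the representative $\alpha_0$. I would handle this by showing that $\alpha\mapsto\alpha-\alpha_0$ is an affine symplectomorphism of the orbit onto the orbit with $[\alpha]=0$, and then applying Theorem~\ref{KMM}'s computation, which is local in $\theta$.
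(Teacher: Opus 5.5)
Your proposal is correct. For (a), (b), (f) and the $T^*\Dens(M)$ clause of (c) it is the paper's argument: Lemmas~\ref{lem:closed-alpha}, \ref{lem:integral-homology} and \ref{lem:preimage} plus Moser's lemma, the polar splitting of $\dot\psi$ with horizontality $\int_M\dot\theta\,\varrho=0$, and the shift by a closed $\alpha_0$ on top of \cite{KhMiMo2019}.

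The real difference is how you get the symplectic identification of $\mathbb P\WFnonzero^{cc}$ with $\mathcal O_{(\nu,\varrho)}$. The paper imports it from the Madelung chart. For simply connected $M$ it composes $\Mom$ with $\mathbf S^{-1}$ from Theorem~\ref{KMM}. It then recognizes $(\varrho,[\theta])\mapsto(\mathrm d\theta\otimes\varrho,\varrho)$ as the momentum map of the transitive $\DC$-action on $T^*\Dens(M)$, and reaches the other classes by the shift. You instead use the general identity $\omega(\xi_\psi,\eta_\psi)=\langle\Mom(\psi),[\xi,\eta]\rangle$ (up to sign convention) for an equivariant momentum map. This gives $\Mom^*\omega_{KKS}=\omega|_{\WFnonzero^{cc}}$ on every component at once, with no hypothesis on $\pi_1(M)$. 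It is also literally the curvature condition $\mathrm d\lambda=\Mom^*\omega_{KKS}$ needed in (d), which in the paper comes only from combining the computation of $\mathrm d\lambda$ with (c). In exchange, the paper's route produces the Darboux chart $(\varrho,[\theta])$, which you still have to borrow for the last clause of (c).

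Two points should be made explicit.

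First, in infinite dimensions, transitivity of $\DC$ on $\WFnonzero^{cc}$ does not by itself show that every tangent vector has the form $\xi_\psi$. You can supply this directly:
write $\dot\psi=\psi(u+\mathrm i w)$ with $\int_M u\,\varrho=0$;
solve $\operatorname{div}_\mu(\rho v)=-2\rho u$ by Hodge theory (the infinitesimal Moser step);
take $f=-w-\alpha(v)$.

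Second, your labelling of orbits by $[\alpha]$ uses that $\mathrm da$ is exact, i.e.\ that $a$ is real-valued. This is the same tacit restriction to the identity component of $C^\infty(M,S^1)$ that the paper makes in proving Theorem~\ref{thm:non-zero-trans}. The restriction is genuinely needed: a phase $a\colon M\to S^1$ with nonzero winding shifts $[\alpha]$ by a nonzero class in $H^1(M,2\pi\mathbb Z)$, and would merge the orbits in (a).
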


The multifaceted theorem is illustrated in Figure~\ref{fig:gen_hopf}.
We will prove it as a result of several lemmas of independent interest.

\begin{lemma}\label{lem:preimage}
    Let $\psi \in \WFnonzero$.
    Then $\Mom(\psi) = (\alpha\otimes \varrho,\varrho)$ where $\alpha= \frac{\mathrm i}{2} (\frac{\mathrm d\bar\psi}{\bar\psi}-\frac{\mathrm d\psi}{\psi} )$ is a closed real 1-form on $M$.
    Conversely,   given $(\alpha\otimes\varrho,\varrho) $ in the image of $\Mom$ for some $\psi\in\WFnonzero$,  the full pre-image is  $\Mom^{-1}(\alpha\otimes\varrho,\varrho) = \{ \mathrm e^{\mathrm i c} \psi\mid c\in \mathbb{R}\}$.
\end{lemma}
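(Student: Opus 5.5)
The proof has two parts: a direct computation of $\Mom(\psi)$ in terms of $\alpha$, and a uniqueness argument for the fibre. Both take place on the nonvanishing locus, where we may divide by $\psi$.

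\emph{Computing $\Mom(\psi)$.} Write $\hd=\psi\sqrt{\mu}$. Then $\bar\hd\hd=|\psi|^2\mu=\varrho$, and $\operatorname{Im}(\bar\hd\,\mathrm d\hd)=\operatorname{Im}(\bar\psi\,\mathrm d\psi)\otimes\mu$. Since $\psi\neq0$, we can factor
\[
\operatorname{Im}(\bar\psi\,\mathrm d\psi)=|\psi|^2\operatorname{Im}\Big(\frac{\mathrm d\psi}{\psi}\Big)=|\psi|^2\,\frac{1}{2\mathrm i}\Big(\frac{\mathrm d\psi}{\psi}-\frac{\mathrm d\bar\psi}{\bar\psi}\Big).
\]
The last expression is $|\psi|^2\alpha$ with $\alpha$ exactly as in Lemma~\ref{lem:closed-alpha}, because $\frac{1}{2\mathrm i}=-\frac{\mathrm i}{2}$. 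Hence $\Mom(\psi)=(\alpha\otimes\varrho,\varrho)$, and $\alpha$ is real and closed by Lemma~\ref{lem:closed-alpha}. The identity $\operatorname{Im}(\bar\hd\,\mathrm d\hd)=\operatorname{Im}(\bar\psi\,\mathrm d\psi)\,\mu$ needs no extra term from $\mathrm d\sqrt\mu$, since that term is real times $|\psi|^2$ and so has zero imaginary part. This is consistent with the well-definedness remark cited after Fusca's theorem.

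\emph{Invariance of the fibre.} Suppose $\tilde\psi\in\WFnonzero$ satisfies $\Mom(\tilde\psi)=\Mom(\psi)$. The density components agree, so $|\tilde\psi|=|\psi|$ pointwise. Since $\psi$ is nonvanishing, the ratio $u:=\tilde\psi/\psi$ is a smooth map $M\to S^1$. The $1$-form components also agree, $\tilde\alpha\otimes\varrho=\alpha\otimes\varrho$, and $\varrho>0$, so $\tilde\alpha=\alpha$. On the other hand,
\[
\frac{\mathrm d\tilde\psi}{\tilde\psi}=\frac{\mathrm d\psi}{\psi}+\frac{\mathrm du}{u},
\]
and $\mathrm du/u$ is purely imaginary because $|u|=1$. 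This gives $\tilde\alpha=\alpha+\mathrm i\,\frac{\mathrm du}{u}$ up to sign and normalization, so $\mathrm du/u=0$. Since $M$ is connected, $u$ is a constant $\mathrm e^{\mathrm ic}$. Conversely, $\Mom(\mathrm e^{\mathrm ic}\psi)=\Mom(\psi)$ is immediate: the momentum map is quadratic and invariant under the constant phase, which is the action of a constant $a$. Since $|\mathrm e^{\mathrm ic}\psi|=|\psi|$, normalization is preserved, so the fibre is exactly $\{\mathrm e^{\mathrm ic}\psi\mid c\in\mathbb R\}$.

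\emph{Main obstacle.} The only delicate point is cancelling $\varrho$ in the tensor product $\Omega^1\otimes_{C^\infty}\Omega^n$, which requires $\varrho>0$ everywhere. This holds precisely because $\psi\in\WFnonzero$. The other step that uses a hypothesis is the use of connectedness of $M$ to conclude that $u$ is constant; everything else is routine algebra.
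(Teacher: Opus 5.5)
Your proposal is correct and follows the paper's proof: the same direct computation of $\Mom(\psi)$, and for the fibre the same argument that equal densities force $\tilde\psi$ to differ from $\psi$ by a pointwise phase, whose differential must then vanish because $\varrho>0$ and $M$ is connected. Working with the $S^1$-valued ratio $u=\tilde\psi/\psi$ and $\mathrm du/u$, instead of the paper's $\mathrm e^{\mathrm i f}$ with real $f$, is slightly cleaner on non-simply-connected $M$, where $u$ need not have a global real logarithm.
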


\begin{proof}
    Since $\mathrm{Im}(\bar\hd\, \mathrm d\hd) = -\frac{\mathrm i}{2}(\bar\hd \mathrm d\hd - \hd \mathrm d\bar\hd) 
    = \frac{\mathrm i}{2} \bar\hd\hd \otimes (\frac{\mathrm d\bar\psi}{\bar\psi}-\frac{\mathrm d\psi}{\psi} )$ and $\bar\hd\hd = \varrho$ we get that 
    $\Mom(\psi) = ( \operatorname{Im}(\bar\hd \, \mathrm d\hd), \bar\hd \hd ) = (\alpha\otimes\varrho,\varrho)$.
    Conversely, since $\varrho = |\hd|^2$, the kernel for the map into the second component of $\Mom$ must be of the form $\mathrm e^{\mathrm i f}\psi$ for some real-valued function $f$.
    The result now follows since for the first component $\Mom_1$ one has $\Mom_1(\mathrm e^{\mathrm i f}\psi) = \operatorname{Im}(\overline{\mathrm e^{\mathrm i f}\hd}\mathrm d (\mathrm e^{\mathrm i f}\hd)) = \operatorname{Im}(\bar\hd \mathrm d \hd) + \varrho\, \mathrm d f = \Mom_1(\psi) + \varrho\, \mathrm d f$, which implies $\mathrm d f = 0$ so $f$ must be a constant function. \end{proof}

\begin{proof}[Proof of Theorem~\ref{thm:no-zeros}]
Lemmas \ref{lem:closed-alpha}, \ref{lem:integral-homology}, and \ref{lem:preimage} prove the statements 
(a)-(d) except for the symplectomorphism part with $T^*\Dens(M)$ in (c).
The latter, together with (f), was proved for a simply-connected $M$ in \cite{KhMiMo2019}, see Theorem~\ref{KMM} above. Recall that for a simply-connected manifold $M$ there is a symplectomorphism $\mathbf{S}\colon \mathbb P\WFnonzero\to T^*{\rm Dens}(M)$. Moreover, in the simply-connected case, we globally have $\alpha = \mathrm d\theta$ and the statement is that $\mathcal{O}_{(\mathrm d\theta\otimes\varrho,\varrho)}$ is symplectomorphic to $T^*\Dens(M)$ via the mapping 
\begin{equation*}
    T^*\Dens(M) \ni (\varrho,[\theta])\mapsto (\mathrm d\theta\otimes\varrho,\varrho)\in \mathcal{O}_{(\mathrm d\theta\otimes\varrho,\varrho)}.
\end{equation*}
Indeed, the latter map can be regarded as the composition  $\Mom\circ \mathbf{S}^{-1}\colon T^*\Dens(M)\to \mathfrak{dc}^*$ 
(where the map $\Mom$ can be naturally understood as defined on  $\mathbb P\WFnonzero$ thanks  to Lemma~\ref{lem:preimage}) and  it is explicitly given by
\begin{equation}
    \langle\Mom\circ \mathbf{S}^{-1}(\varrho,\theta), (v,f) \rangle = \langle \theta, -L_v\varrho \rangle + \langle f, \varrho\rangle = \langle (\mathrm d\theta\otimes \varrho,\varrho), (v,f)). 
\end{equation}
Since the action on $T^*\Dens(M)$ is transitive, it follows that the image of $\Mom$ consists of one coadjoint orbit $\mathcal{O}_{(\mathrm d\theta\otimes\varrho,\varrho)}$ and that it is a symplectomorphism.

Assume now that $M$ is not necessarily simply-connected.
For a closed but not exact 1-form $\alpha_0$ on $M$, consider  the mapping 
\begin{equation*}
    \mathcal{O}_{(\mathrm d\theta\otimes\varrho,\varrho)}\ni (\mathrm d\theta\otimes\varrho,\varrho)\mapsto ((\mathrm d\theta + \alpha_0)\otimes\varrho,\varrho) \in \mathcal{O}_{(\alpha\otimes\varrho,\varrho)} 
\end{equation*}
for a closed 1-form $\alpha:=\mathrm d\theta + \alpha_0$.
Since this momentum shift $\mathrm d\theta\mapsto \mathrm d\theta + \alpha_0$ is a symplectomorphism, $\mathcal{O}_{(\alpha,\varrho)}$ is also symplectomorphic to $T^*\Dens(M)$.
\smallskip

To prove (e) we first recall that 
a {\it prequantum bundle for a symplectic manifold} $(S, \omega)$ is  
understood as an $S^1$-bundle equipped with an $S^1$-invariant 1-form
$\lambda$ satisfying $\rm d\lambda=\omega$ (and nontrivial in the $S^1$-direction). 

Now note that the 1-form $\lambda_\psi$ is linear in $\psi$.
Thus, its differential is the 2-form
\begin{equation*}
    (\dot\psi_1,\dot\psi_2)_{\psi} \mapsto 2\,\mathrm{Im}\int_M \overline{\dot\hd_1}\dot\hd_2,
\end{equation*}
which is the symplectic structure descending to the Fubini--Study one on the projectivization. Its nontriviality and $S^1$-invariance
immediately follows from its restriction to any finite-dimensional subspace: the form $\lambda:={\rm Im}\,\bar z\mathrm d z$ in $\mathbb C^{2k+2}$ defines the standard contact structure in the Hopf bundle $S^{2k+1}\to \mathbb{CP}^k$, while its differential $\rm d\lambda$ descends to the Fubini--Study symplectic structure.

For the Riemannian property in statement (f), we notice that the tangent space $T_{(\nu,\varrho)}\mathcal{O}_{(\nu,\varrho)}$ for $\nu=\alpha\otimes\varrho=(\mathrm d\theta+\alpha_0)\otimes\varrho$
is naturally identified with the tangent space $T_{(\mathrm d\theta\otimes\varrho,\varrho)}\mathcal{O}_{(\mathrm d\theta\otimes\varrho,\varrho)}\simeq T_{(\varrho,[\theta])}T^*\Dens(M)$ by the same shift $\mathrm d\theta\mapsto \mathrm d\theta + \alpha_0$. 
The expression for the Sasaki--Fisher--Rao metric depends only on $\mathrm d\theta$ and $\varrho$, but not on
$\alpha_0$, while the shift 
allows one to define this metric on other orbits. Indeed, regardless of  $\alpha_0$, one has the same expression for the Sasaki--Fisher--Rao metric
\begin{equation*}
    \langle (\dot\varrho,\dot\theta), (\dot\varrho,\dot\theta)\rangle_{(\varrho,\theta)} = \frac{1}{2}\int_M \Big(\frac{\dot\varrho^2}{\varrho} + \dot\theta^2 \varrho\Big),
\end{equation*}
where $\dot\theta \in [\theta]$ is chosen so that $\int_M \dot\theta\varrho = 0$.
The relation to the Fubini--Study metric is then established  as in \cite{KhMiMo2019}, namely if $\psi = \sqrt{\rho \mathrm e^{\mathrm i\theta}}$ then $\dot\psi = \frac12(\dot\varrho/\varrho + \mathrm i \dot\theta)\psi$, so
\begin{equation*}
    2\, \mathrm{Re}\int_M \overline{\dot\hd}\dot\hd = \frac{1}{2}\int_M \Big( \frac{\dot\varrho}{\varrho} - \mathrm{i}\dot\theta \Big)\Big( \frac{\dot\varrho}{\varrho} + \mathrm{i}\dot\theta \Big)\varrho,
\end{equation*}
which gives the result.
\end{proof}



\section{Wave functions with zeros}\label{sect:yes-zeros}
\subsection{Differential topology of half-densities with zeros}

Consider now the case when $\psi$ has zeros on a set $\gamma \subset M$. We assume that 0 is a noncritical value 
of a complex-valued function $\psi$, so $\gamma=\psi^{-1}(0)$ is a smooth submanifold of $M$ of codimension 2. 
If we consider the complement $M\backslash \gamma$, the setting is not so different from the previous section (wave functions without zeros). However, a few additional results are needed first.

\begin{figure}
    \includegraphics{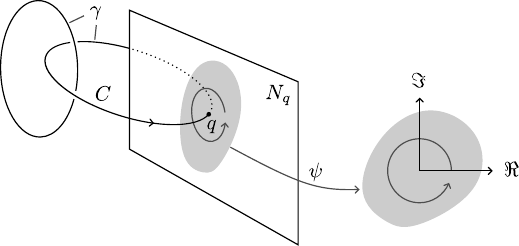}
    \caption{The orientation of any connected component $C\subset\gamma$ is inherited from the orientation of $\mathbb{C}$ since the wave-function $\psi$ restricted to a transversal $N$ is a local diffeomorphism on $\mathbb{C}$ near the zero-set $\gamma$.
    }\label{fig:orient}
\end{figure}

\begin{proposition}
    \phantom{hej}
    \begin{enumerate}[label=(\alph*),leftmargin=*]
        \item Every connected component $C$ of $\gamma$ has a natural orientation.
        \item The density $\varrho=|\hd|^2$ is a Morse-Bott volume form on $M$, i.e., $\varrho$ vanishes on $\gamma$ and has a ``quadratic degeneration in the transversal to $\gamma$ directions''.
        \item The submanifold $\gamma$ is a boundary, $[\gamma]=0\in H_2(M,\mathbb{R})$.
    \end{enumerate}
\end{proposition}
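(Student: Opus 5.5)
Near any point $p\in\gamma$, noncriticality of $0$ means $\mathrm d\psi_p\colon T_pM\to\mathbb C\simeq\mathbb R^2$ is surjective. By the implicit function theorem there are local coordinates $(x,y,z)\in\mathbb R^2\times\mathbb R^{n-2}$ around $p$ with $\gamma=\{x=y=0\}$ and $\psi=x+\mathrm i y$. All three statements will be read off from this normal form, as in Figure~\ref{fig:orient}.

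For (a), let $N$ be a small 2-disk transversal to $C$ at $p$. Then $\psi|_N$ is a local diffeomorphism onto a neighbourhood of $0\in\mathbb C$, and we orient $N$ by pulling back the complex orientation of $\mathbb C$. Equivalently, the normal bundle $\nu(C)\cong TM|_C/TC$ is oriented by the isomorphism $\mathrm d\psi\colon\nu(C)\to\mathbb C$. Together with the orientation of $M$, this orients $TC$ by the convention ``orientation of $TC$ followed by orientation of the normal bundle gives orientation of $TM$''. The choice is continuous along $C$ because $\mathrm d\psi|_{\nu(C)}$ is a continuous family of isomorphisms, so $C$ carries a well-defined orientation. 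Replacing $\psi$ by $\bar\psi$ would reverse it, which matches the later ``$\pm2\pi$ depending on orientation'' in the quantization condition.

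For (b), in the normal-form coordinates $\varrho=|\psi|^2\mu=(x^2+y^2)\,h\,\mathrm dx\,\mathrm dy\,\mathrm dz$ with $h>0$. So $\varrho$ vanishes exactly on $\gamma$ and its coefficient function has Hessian in the normal directions equal to $2h\,(\mathrm dx^2+\mathrm dy^2)$, which is nondegenerate. This is precisely the Morse--Bott condition of~\cite{KhesinVolk25}.

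For (c), I would argue that $\gamma$, with the orientation from (a), is Poincaré dual to the first Chern class of the trivial line bundle $M\times\mathbb C$, computed from the section $\psi$. That class is zero, so $[\gamma]=0$ in $H_{n-2}(M;\mathbb R)$; I read the ``$H_2$'' in the statement as meaning codimension~2. To make this concrete I would exhibit an explicit bounding chain: take the regular value $1\in S^1$ of $\arg\psi\colon M\setminus\gamma\to S^1$, or perturb to a regular value $e^{\mathrm i c}$. The closure of $\{\psi\in e^{\mathrm i c}\mathbb R_{>0}\}$ is an oriented hypersurface whose boundary is exactly $\gamma$, since near $\gamma$ it is the half-plane $\{y=0,x\ge0\}\times\mathbb R^{n-2}$ in normal coordinates. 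The main point of care is checking that the orientation induced on its boundary agrees with (a). For compact $M$ this gives $[\gamma]=0$ even integrally. For noncompact $M$ with Schwartz-class $\psi$, I would note that $\gamma$ is compact whenever $\psi$ has no zeros near infinity, and otherwise restrict to the appropriate homology.
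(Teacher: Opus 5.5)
Your proposal is correct and follows essentially the paper's own argument: for (a) you pull back the orientation of $\mathbb C$ to the normal directions via $\mathrm d\psi$, for (b) you use the local normal form $\psi=x+\mathrm i y$, and for (c) you realize $\gamma$ as the boundary of the closure $\Gamma$ of a regular phase level $\{\arg\psi=c\}$, just as the paper does. Your Chern-class remark is an extra viewpoint, and your reading of $H_2$ as $H_{n-2}$ is right; the orientation check you flag is automatic, because in the normal form both the boundary orientation of $\Gamma$ and the orientation from (a) are given by the same local recipe at every point of $\gamma$, so $\partial\Gamma=\pm\gamma$ with one global sign.
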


\begin{proof}
    (a) Fix any connected component $C$ of $\gamma$ and consider a transversal $N_q$ to $C$ at a point $q\in C$.
    Then, due to the noncritical property, the restriction $\psi|_{N_q}$ is a local diffeomorphism from a neighborhood of $q\in N_q$ to neighborhood of $0\in \mathbb{C}$.
    Thus, $N_q$ inherits the orientation of $\mathbb{C}$, see Figure~\ref{fig:orient}.
    Furthermore, this orientation of $N_q$
    cannot change along $C$ since  $\psi$ is smooth, so this defines a coorientation of $C$ in $M$.
    Since $M$ is oriented, it also defines an orientation of $C$, as claimed.

     
    (b) The nondegeneracy of $\psi$ on the zero set $\gamma$ implies that $\varrho=|\hd|^2$ has a nondegerate Morse minimum 
   on $\gamma\subset M$ and hence it is a Morse-Bott volume form on $M$, as stated. 

    (c) One can use the hypersurface $\Gamma\subset M\backslash \gamma$ on which $\psi$ has any prescribed generic phase $\theta_0$, e.g., $\theta_0=0\in S^1$. Then the codim 2 submanifold $\gamma=\{\psi=0\}$ is the boundary of a codim 1 submanifold $\Gamma:=\{\mathrm{arg}\,\psi=\theta_0 \mod 2\pi\}$: $\gamma=\partial \Gamma$.
  For a compact $M$ map $\psi: M\to \mathbb{C}$ has a bounded image and $\Gamma$ (or a its small perturbation to make it nonsingular) is a bounded oriented submanifold with boundary.
   \end{proof}

Consider now the set of all normalized wave functions with noncritical zero set $\gamma$, namely
 \begin{equation*}
    \WFgamma := \{ \psi \in C^\infty(M,\mathbb{C})\mid \int_M|\hd|^2=1, \,\psi^{-1}(0) = \gamma \;\,\text{is a noncritical submanifold} \}.
 \end{equation*}
We also consider the union of all wave functions whose noncritical zeros sets are diffeomorphic to $\gamma$ by means of a diffeomorphism of $M$:
 \begin{equation*}
    \WFunion := \{ \psi \in C^\infty(M,\mathbb{C})\mid \int_M|\hd|^2=1, \,\exists \varphi\in\mathrm{Diff}(M) \text{ such that } \psi \in \WFgamma[\varphi(\gamma)]\}.
 \end{equation*}

Below we prove that given $\gamma$ as before, the connected components of the space $\WFunion$ are in correspondence with the elements of $H^1(M,2\pi\mathbb{Z})$.

 \begin{proposition}\label{prop:trans}
    Let $\WFunion[\gamma]^{\it cc}$ denote a connected component of $\WFunion$.
    The semi-direct product group $\DC \coloneqq \mathrm{Diff}(M)\ltimes C^\infty(M, S^1)$ then acts transitively on each $\WFunion[\gamma]^{\it cc }$.
 \end{proposition}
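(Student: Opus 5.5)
We follow the proof of Theorem~\ref{thm:non-zero-trans}: given two wave functions $\psi_0,\psi_1$ in the same connected component $\WFunion[\gamma]^{\it cc}$, joined by a path $\psi(t)$ in that component, we construct $(\varphi,a)\in\DC$ with $(\varphi,a)\cdot\psi_0=\psi_1$ in three moves. First we align the zero sets, then the densities, then the phases.

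\emph{Step 1 (zero sets).} Along the path, the zero set $\gamma_t=\psi(t)^{-1}(0)$ stays a noncritical codimension-2 submanifold, so it varies smoothly. By the isotopy extension theorem there is a diffeotopy $\varphi_t\in\mathrm{Diff}(M)$ with $\varphi_t(\gamma_0)=\gamma_t$. Replacing $\psi_0$ by $\varphi_{1*}\psi_0$, we may assume $\psi_0^{-1}(0)=\psi_1^{-1}(0)=\gamma$. The coorientations of the components of $\gamma$ induced by $\psi_0$ and $\psi_1$ (part (a) of the preceding Proposition) must then also agree, since they vary continuously along the path.

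\emph{Step 2 (densities).} The densities $\varrho_0=|\hd_0|^2$ and $\varrho_1=|\hd_1|^2$ are both Morse--Bott volume forms vanishing exactly on $\gamma$, by part (b) of the preceding Proposition, and both have total mass $1$. We invoke the Moser-type theorem for Morse--Bott densities of \cite{KhesinVolk25} to obtain $\varphi\in\mathrm{Diff}(M)$ fixing $\gamma$ with $\varphi_*\varrho_0=\varrho_1$. We expect this to be the main obstacle. One has to check that the hypotheses of that theorem hold here: the transversal Hessians of $\varrho_0$ and $\varrho_1$ along $\gamma$ should be equivalent, which we would arrange first by a fiberwise linear map of the normal bundle homotopic to the identity (possible thanks to the orientation matching from Step~1). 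The remaining invariants should be forced to coincide by the connecting path $\varrho(t)$. If the theorem is only available for nearby densities, we would apply it along the path via a Moser-type homotopy argument.

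\emph{Step 3 (phases).} After Steps 1 and 2 we have $|\psi_0|=|\psi_1|$, up to the Jacobian factor that is built into the half-density action. Hence $\psi_1=u\,\psi_0$ on $M\setminus\gamma$ for a smooth map $u\colon M\setminus\gamma\to S^1$. Since the zeros of $\psi_0$ and $\psi_1$ are noncritical with the same coorientation, $u$ has winding number $0$ around each component of $\gamma$. A local normal-form computation shows that $u$ extends smoothly across $\gamma$; if necessary we compose with a diffeomorphism near $\gamma$ that is isotopic to the identity and preserves $\varrho$, so as to match the linearizations of $\psi_0$ and $\psi_1$ on the normal bundle. It then remains to write $u=\exp(-\mathrm i a)$ with $a\in C^\infty(M,\mathbb R)$. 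This is possible exactly when the class $[u^{-1}\mathrm du]\in H^1(M,2\pi\mathbb Z)$ vanishes.

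To see that this class vanishes, run the same construction along the whole path: $u_t=\psi(t)/(\varphi_t)_*\psi_0$ then gives a continuous family of classes in the discrete group $H^1(M,2\pi\mathbb Z)$, equal to $0$ at $t=0$. Hence the class is $0$ at $t=1$, which is precisely the statement that the components are labelled by $H^1(M,2\pi\mathbb Z)$. Combining the three steps gives $(\varphi,a)\cdot\psi_0=\psi_1$, which proves transitivity on each $\WFunion[\gamma]^{\it cc}$.
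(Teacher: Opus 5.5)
Your proposal is correct and follows the paper's proof. The three moves are the same: align the zero sets by a diffeomorphism, then match the Morse--Bott densities with Theorem~\ref{thm:KV25}, then write the unimodular ratio as a phase factor $\exp(-\mathrm i a)$, since it has zero winding around $\gamma$ and extends smoothly across it. (Theorem~\ref{thm:KV25} as stated needs only equal zero sets and equal total mass, so your Hessian-matching precaution is unnecessary.)

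Your path-following in Steps~1 and~3 is more careful than the paper, which asserts the phase difference is univalued without noting that this depends on choosing $\varphi$ compatibly with a connecting path. You also do not need a parametric version of Theorem~\ref{thm:KV25} for this. The density-matching diffeomorphism is, as Moser's path method yields, isotopic to the identity through diffeomorphisms preserving $\gamma$. So only the zero-set isotopy of Step~1 affects the class of $u$ in $H^1(M\setminus\gamma)$, into which $H^1(M)$ injects because $\gamma$ has codimension~$2$.
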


\begin{proof}
The proof is based on the following Moser-type theorem for Morse-Bott volume forms.\footnote{While in
\cite{KhesinVolk25} it was proved for a compact $M$, the theorem allows an immediate extension to a compact $\gamma$ in a noncompact $M$ without the volume constraint.}

\begin{theorem}[\cite{KhesinVolk25}]\label{thm:KV25}
Any two Morse-Bott volume forms on a compact $M$ with the same set of zeros $\gamma\subset M$ of codimension 2
and the same total volume over $M$
are diffeomorphic by means of a diffeomorphism fixed on $\gamma$.
\end{theorem}

To prove transitivity of the $\DC$-action we consider two wave functions $\psi_1$ and $\psi_2$ with diffeomorphic zero sets.
By applying a diffeomorphism of $M$ we can assume that their zero sets coincide with $\gamma$. Furthermore, by applying Theorem \ref{thm:KV25} and composing with another diffeomorphism fixed on $\gamma$ one can assume that upon
application of a diffeomorphism $\varphi$ the two wave functions have the same 
pointwise absolute values, i.e., they can be written as 
$\tilde \psi_1=\sqrt{\rho e^{\mathrm i \theta_1}}$ and 
$\tilde \psi_2=\sqrt{\rho e^{\mathrm i \theta_2}}$. 
Then the ratio of the wave functions can be expressed as 
$\tilde \psi_1/\tilde \psi_2=e^{\mathrm i (\theta_1-\theta_2)/2}$. 
The argument functions $\theta_1$ and $\theta_2$ change by the same quantity when going around any 
connected component of $\gamma$. Then the difference $\theta_1-\theta_2$ is 
a well-defined multivalued function on $M$. By the assumption that $\psi_1$ and $\psi_2$ 
belong to the same connected component $\WFunion[\gamma]^{\it cc}$, 
this difference is actually a univalued smooth function on $M\backslash \gamma$, 
which we denote by $a:=\theta_1-\theta_2$. Furthermore, by the implicit function theorem, 
since $\psi_1$ and $\psi_2$ have nondegenerate linear parts in the normal to $\gamma$, by an appropriate choice of coordinates in the normal, $\psi_1$ can be made linear, while $\psi_2$, having the same degree and  the same pointwise absolute values, 
can differ from from $\psi_1$ by a smooth phase $e^{\mathrm i a}$
where the function $a$ extends smoothly to the points of $\gamma$.
This  implies that $\psi_2=(\varphi,a)\cdot \psi_1$, that is one of the wave functions is obtained from the other by the action of a diffeomorphism $\varphi$ and a smooth phase correction $a$, i.e., by the action of the element $(\varphi,a)\in \DC$.   
\end{proof}

The following special property is yet another way to visualize the relation between a wave function  
$\psi$ on $M$ and its regular zeros $\gamma$. 
Similarly to the previous section define the 1-form $\alpha=\frac {\rm i}{2} \left(\frac{\rm d\bar\psi}{\bar\psi}-\frac{\rm d\psi}{\psi} \right)$  outside the zero-set of $\psi$. 

\begin{theorem}\label{thm:sing-alpha}
Given a wave function $\psi$ with regular zeros on $\gamma\subset M$
consider the singular 1-form $\alpha=\frac {\rm i}{2} \left(\frac{\rm d\bar\psi}{\bar\psi}-\frac{\rm d\psi}{\psi} \right)$.
Then $\rm d \alpha=2\pi\delta_\gamma$, where $\delta_\gamma$ is the de Rham current 
with support on $\gamma\subset M$ ``counting" the number of intersections with it.
\end{theorem}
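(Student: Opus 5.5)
Away from $\gamma$ the statement is just Lemma~\ref{lem:closed-alpha}: on $M\setminus\gamma$ the form $\alpha$ is closed, since locally $\alpha=\mathrm d\theta/2$. So $\mathrm d\alpha$, taken as a current on $M$, is supported on $\gamma$. The plan is to show that $\alpha$ is locally integrable on $M$, so that it defines a current, and then to compute $\mathrm d\alpha$ by testing it against compactly supported $(n-2)$-forms $\beta$. By definition $\langle \mathrm d\alpha,\beta\rangle = \pm\int_M \alpha\wedge \mathrm d\beta$, and the goal is to identify this with $2\pi\int_\gamma\beta$, with $\gamma$ carrying the natural orientation from part (a) of the Proposition above.

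First I would fix the local model. Since $0$ is a regular value, near each point of $\gamma$ there are coordinates $(x,y,z)$ with $z\in\mathbb R^{n-2}$ in which $\gamma=\{x=y=0\}$ and $\psi = x+\mathrm i y$. The map $\psi$ itself is a valid coordinate in the normal directions, so this choice is consistent with the orientation of the normal planes used in part (a). In these coordinates
\[
\alpha=\tfrac{\mathrm i}{2}\Big(\tfrac{\mathrm d\bar\psi}{\bar\psi}-\tfrac{\mathrm d\psi}{\psi}\Big)=\mathrm{Im}\,\tfrac{\mathrm d\psi}{\psi}=\mathrm d\phi
\]
(up to the paper's sign convention), where $\phi=\arg(x+\mathrm i y)$ is the angular form in the normal plane. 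Its coefficients are $O(1/r)$ with $r=|\psi|$, which is integrable in two normal dimensions, so $\alpha\in L^1_{loc}$ and $\mathrm d\alpha$ is a well-defined current.

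Next I would excise a tube $T_\epsilon=\{|\psi|<\epsilon\}$, which is a smooth tubular neighbourhood because $\epsilon$ is a regular value of $|\psi|^2$ near $\gamma$. Then
\[
\int_M\alpha\wedge \mathrm d\beta=\lim_{\epsilon\to0}\int_{M\setminus T_\epsilon}\alpha\wedge \mathrm d\beta .
\]
Because $\mathrm d\alpha=0$ off $\gamma$, Stokes' theorem reduces the right-hand side to $\pm\int_{\partial T_\epsilon}\alpha\wedge\beta$. The boundary $\partial T_\epsilon$ is a circle bundle over $\gamma$. Integrating fibrewise, $\beta$ restricts to $\beta|_\gamma+O(\epsilon)$, and the fibre integral of $\alpha$ equals the winding number of $\psi$ around the circle, times $2\pi$. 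In the local model this winding number is $+1$ with respect to the normal orientation. The limit is therefore $2\pi\int_\gamma\beta$, which gives $\mathrm d\alpha=2\pi\delta_\gamma$; pairing with a transversal normal disc then counts intersections with their signs.

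The main obstacle is the bookkeeping of signs and orientations. Four conventions interact: the sign in the definition of $\alpha$ (the local identity $\alpha=\mathrm d\theta/2$ with $\psi=\sqrt{\rho}\,\mathrm e^{\mathrm i\theta/2}$ yields $\mathrm d(\arg\psi)$), the boundary orientation of $\partial T_\epsilon$, the sign convention for the differential of a current, and the coorientation of $\gamma$ from part (a). I would fix all of these by checking the model case $M=\mathbb C$, $\psi=z$, where $\oint_{|z|=\epsilon}\alpha=2\pi$. A second, minor point is controlling the $O(\epsilon)$ error uniformly: $|\alpha|=O(1/\epsilon)$ on $\partial T_\epsilon$ while the fibre length is $O(\epsilon)$, so the error term is $O(\epsilon)\to0$ as long as $\gamma$ is compact or $\beta$ has compact support.
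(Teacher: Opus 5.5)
Your proposal is correct. It rests on the same two facts as the paper's proof. First, $\alpha=\mathrm{Im}(\mathrm d\psi/\psi)=\mathrm d\arg\psi$ is closed on $M\setminus\gamma$; this identity is exact, so the sign hedge is unnecessary. Second, the period of $\alpha$ around a small circle linking $\gamma$ is $2\pi$.

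Where you differ is in how the identity of currents is verified. The paper takes a disk $D$ transversal to $\gamma$ and writes $\int_D\mathrm d\alpha=\int_{\partial D}\alpha=2\pi\,\#(D\cap\gamma)$. In effect it pairs $\mathrm d\alpha$ with $2$-chains and reads $\delta_\gamma$ directly as the intersection-counting functional. This is quick and makes the ``counting intersections'' interpretation immediate. However, it applies Stokes across the singular point $D\cap\gamma$, and it never checks that $\alpha$ defines a current in the first place.

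You instead test against compactly supported $(n-2)$-forms, which is what $\mathrm d\alpha=2\pi\delta_\gamma$ actually means for de Rham currents. Your argument has four steps: local integrability of $\alpha$ from the $O(1/r)$ bound; excision of the tube $\{|\psi|<\epsilon\}$, localized to $\mathrm{supp}\,\beta$ if $M$ is noncompact; Stokes on the smooth part; and a fibrewise evaluation of the boundary term giving $2\pi\int_\gamma\beta$ in the limit. The price is the orientation and limit bookkeeping, which you handle correctly.

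Two small simplifications are available:
\begin{itemize}
\item The normal bundle has rank $2$, so orienting $\gamma$ tangent-first or normal-first gives the same answer.
\item On $\partial T_\epsilon$ the restrictions of $\mathrm dx\wedge\mathrm d\phi$ and $\mathrm dy\wedge\mathrm d\phi$ vanish identically. Hence the only error term is the $O(\epsilon)$ variation of the tangential coefficient of $\beta$.
\end{itemize}
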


\begin{proof}
First recall that on $M\backslash \gamma$ the 1-form $\alpha$ is closed (and locally $\alpha=\frac{1}{2}\rm \mathrm d\theta$), hence the 2-form $\rm d\alpha=0$ and ${\rm supp}\, \rm d\alpha\subset \gamma$.
The form $\alpha$ has a pole-like singularity at $\gamma$. 
Now consider a simple contour  $\ell$ around a component of $\gamma$, and let a disk $D$ has boundary $\partial D=\ell$.
Then 
$$
\int_D \mathrm{d}\alpha = \int_{\partial D} \alpha =\frac 12\int_\ell \mathrm{d}\theta = 2\pi\cdot \#(D\cap \gamma)=2\pi\int_D\delta_\gamma\,,
$$
where $ \#(D\cap \gamma)$ is the signed number of intersections of  $D$ and $\gamma$ according to their orientations. (Note that since $\psi=\sqrt{\rho e^{\rm i\theta}}$ one full rotation of the argument of $\psi$
around 0 corresponds to $4\pi$-change in $\theta$.)
\end{proof}


\subsection{Symplectic geometry of the action on wave functions}
We now return again to Fusca's momentum map 
\begin{equation*}
    \Mom\colon \psi \mapsto \Big( \operatorname{Im}(\bar\hd \, \mathrm d\hd), \bar\hd \hd \Big) 
\end{equation*}
for the Hamiltonian action of the semi-direct product group $\DC$ on wave functions~$\psi$ (or equivalently on half-densities $\hd=\psi\sqrt\mu$). 
The latter formula is valid  on the whole of $M$. On the other hand, for $\psi = \sqrt{\rho e^{\mathrm i \theta}}$  the previously used representation $\Mom(\psi) = (\alpha\otimes  \varrho, \varrho)$ with a smooth closed 1-form $\alpha=\frac {\rm i}{2} \left(\frac{\rm d\bar\psi}{\bar\psi}-\frac{\rm d\psi}{\psi} \right)$ is valid outside the zero-set of $\psi$ (or equivalently, outside the zeros of $\varrho$). 
Recall that one has $\alpha=\mathrm d \theta/2$ with a multivalued phase function $\theta$,
locally well-defined (modulo an additive constant) outside the zero-set $\gamma$. 
\medskip

Again, as in the previous section, our goal is to describe the coadjoint orbits $\mathcal{O}_{(\nu, \varrho)}$ contained in the image of the  momentum map  $\Mom\colon \WFunion \to \mathfrak{dc}^*$, i.e.
the $\Mom$-image of all normalized wave functions with noncritical zeros diffeomorphic to $\gamma$. 
We will also need the notation 
$$
{\rm Dens}_\gamma(M):=\{\varrho\in \Omega^n(M)~|~\varrho\ge 0, \,\int_M\varrho=1, \,\varrho(\gamma)=0
\}$$ for all normalized Morse-Bott densities with degeneration only on $\gamma$, while ${\rm Dens}_{\cup\gamma}(M)$ stands for such densities with degenerations on sets diffeomorphic to $\gamma$.

The following result extends Theorem~\ref{thm:no-zeros} to the case of wave functions with zeros.

\begin{theorem}\label{thm:zeros}
Let $(\nu,\varrho) \in \mathfrak{dc}^*$ where $\varrho\in {\rm Dens}_\gamma(M)$ is a  Morse-Bott density vanishing on $\gamma\subset M$, 
and $\mathcal{O}_{(\nu, \varrho)}$ is the  corresponding coadjoint orbit. 
Let $\alpha$ be the 1-form on $M\backslash\gamma$ defined by $\alpha\otimes\varrho = \nu$.
Then the following holds:

\begin{enumerate}[label=(\alph*),leftmargin=*]
\item The coadjoint orbit $\mathcal{O}_{(\nu, \varrho)}$ belongs to the image of  wave functions  $\Psi_{\cup\gamma}$ under the momentum map $\Mom$ if and only if
the cohomology class of $\alpha$ fulfills $[\alpha]\in H^1(M\backslash \gamma,2\pi\mathbb{Z})$ and the periods of $\alpha$ around components of 
     $\gamma$ are equal to $\pm 2\pi$ depending on the components' orientations.
If $\psi_j, \, j=1,2,$ belong to the same connected component $\Psi_{\cup\gamma}^{\it cc}$ of $\Psi_{\cup\gamma}$ and
$\varrho_1, \varrho_2\in {\rm Dens}_\gamma(M)$, then the difference  $\alpha_1-\alpha_2$ is an exact (smooth) 1-form on~$M$.

\item Those orbits are in 1-1 correspondence with the 
connected components of the space $\Psi_{\cup\gamma}$. Moreover, two different 1-forms $\alpha_1$ and $\alpha_2$ 
with densities $\varrho_1, \varrho_2\in {\rm Dens}_\gamma(M)$
corresponding to images $\Mom(\psi_j)=(\nu_j,\varrho_j)$ for wave functions $\psi_j, \, j=1,2,$ belonging to different connected components 
of $\Psi_{\cup\gamma}$ differ by an element of $H^1(M,2\pi\mathbb{Z})$, i.e., $[\alpha_1-\alpha_2]\in H^1(M,2\pi\mathbb{Z})$. (In particular, $\alpha_1$ and $\alpha_2$ have the same singularities on $\gamma$.)

\item Each such orbit $\mathcal{O}_{(\nu,\varrho)}$ can be regarded as a fibration over the space ${\Dens}_{\cup\gamma}(M) $ of normalized Morse-Bott densities $\varrho$ on $M$ with the critical set diffeomorphic to $\gamma$.


\item The map $\Mom\colon \Psi_{\cup\gamma}^{\it cc} \to \mathcal{O}_{(\nu, \varrho)}$ 
is an $S^1$-bundle over the orbit.
This is a prequantum bundle for each coadjoint orbit $\mathcal{O}_{(\nu, \varrho)}$ in the image of 
$\Mom(\Psi_{\cup\gamma}^{\it cc})$.

\item The 1-form $\lambda$ 
defined at $\psi$ by 
\[
    \lambda_{\psi}[\dot\psi] = \mathrm{Im}\int_M \overline{\hd}\dot\hd
\]
can be regarded as a Liouville 1-form on this prequantum bundle;

\item The map $\Mom\colon \Psi_{\cup\gamma}^{\it cc} \to \mathcal{O}_{(\nu, \varrho)}$ is a Riemannian submersion of the $L^2$ metric on $C^\infty(M,\mathbb{C})$ to an induced metric on the orbits $\mathcal{O}_{(\nu, \varrho)}$.
\end{enumerate}
\end{theorem}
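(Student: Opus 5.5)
The plan is to reduce everything to the complement $M\backslash\gamma$, where the earlier machinery for nonvanishing wave functions applies, and then control the behaviour near $\gamma$ using the noncritical-zero normal form and Theorem~\ref{thm:KV25}.

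\textbf{Part (a).} For $\psi\in\Psi_\gamma$ we have $\Mom(\psi)=(\alpha\otimes\varrho,\varrho)$ on $M\backslash\gamma$. By Lemma~\ref{lem:integral-homology}, applied on $M\backslash\gamma$, the class $[\alpha]$ lies in $H^1(M\backslash\gamma,2\pi\mathbb Z)$. Theorem~\ref{thm:sing-alpha} gives the period $\pm2\pi$ around each component of $\gamma$, with the sign fixed by its natural orientation.

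For the converse we start from $(\nu,\varrho)$ with these properties and build $\psi:=\sqrt{\rho}\,e^{\mathrm i\theta/2}$, where $\theta=2\int\alpha$ is defined exactly as in the proof of Lemma~\ref{lem:integral-homology}. It is univalued on $M\backslash\gamma$ because the periods lie in $2\pi\mathbb Z$. Smoothness across $\gamma$ is the delicate point. Since $\varrho$ is Morse--Bott, we have $\sqrt\rho\sim|z|$ in normal coordinates $z$, and since the winding is $\pm1$, $e^{\mathrm i\theta/2}\sim (z/|z|)^{\pm1}e^{\mathrm i a}$. Their product is $z\,e^{\mathrm i a}$ or $\bar z\,e^{\mathrm i a}$, which is smooth with a noncritical zero.

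This step needs a regularity hypothesis on $\nu$ near $\gamma$: $\alpha$ must equal $\pm\frac12 \mathrm d\arg z$ plus a smooth closed form. I will read the phrase ``pole-like singularity'' in the statement as exactly this condition. I also need the Morse--Bott normal form $\varrho=|z|^2 g\,\mu$ with $g>0$ smooth, so that $\sqrt{\rho}/|z|$ is smooth.

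The second claim of (a) then follows directly. On a common component, $\alpha_1$ and $\alpha_2$ have identical singular parts, so $\alpha_1-\alpha_2$ extends smoothly across $\gamma$. By the transitivity result, Proposition~\ref{prop:trans}, we have $\psi_2=(\varphi,a)\cdot\psi_1$ with $\varphi$ isotopic to the identity, and hence $\alpha_2=\varphi_*\alpha_1-\mathrm da$. The classes then agree, so the difference is exact.

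\textbf{Part (b).} Equivariance of $\Mom$ together with Proposition~\ref{prop:trans} shows that each component $\Psi_{\cup\gamma}^{cc}$ maps onto a single coadjoint orbit. Injectivity on components comes from the coadjoint action: $\mathrm{Ad}^*$ changes $\alpha$ only by pushforward under an isotopic diffeomorphism and by exact forms $\mathrm da$. So $[\alpha]\in H^1(M\backslash\gamma)$, modulo the action of $\mathrm{Diff}(M)$, is an orbit invariant.

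Two wave functions with zero set $\gamma$ have smooth ratio $e^{\mathrm i f}$, with $f$ multivalued on $M$. Hence $\alpha_1-\alpha_2=\frac12\mathrm d f$ is smooth on $M$, with periods in $2\pi\mathbb Z$ because the ratio is a smooth $S^1$-valued map on $M$; this is the Proposition in Remark~\ref{rem:topological_proof}. If $[\alpha_1-\alpha_2]\neq0$, the two functions lie in different components, and conversely.

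\textbf{Parts (c)--(f).} For (c), the projection $(\nu,\varrho)\mapsto\varrho$ maps the orbit onto $\Dens_{\cup\gamma}(M)$. Surjectivity uses Theorem~\ref{thm:KV25} combined with $\mathrm{Diff}(M)$ moving $\gamma$. The fibre is the set of admissible $\alpha$, an affine space over exact forms, which mirrors $T^*\Dens(M)$.

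For (d), the fibres of $\Mom$ are the constant phases $e^{\mathrm i c}\psi$. This is the argument of Lemma~\ref{lem:preimage}, which goes through because $\varrho\,\mathrm df=0$ on the dense set $M\backslash\gamma$ forces $f$ to be constant. The Kostant--Kirillov form pulled back by $\Mom$ equals the restriction of the ambient symplectic form, by the general property of equivariant momentum maps restricted to a homogeneous space. Thus $\mathrm d\lambda$ descends to the orbit form, exactly as in the proof of Theorem~\ref{thm:no-zeros}(e), which proves (d) and (e).

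For (f), take the horizontal space to be the $L^2$-orthogonal complement of $\mathrm i\psi$ and \emph{define} the orbit metric by pushforward. This is well defined because $S^1$ acts isometrically.

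\textbf{Main obstacle.} The hard step is the converse in (a): showing that the reconstructed $\psi$ is smooth across $\gamma$, and that $\alpha$ being ``pole-like'' is precisely the condition needed. I will handle it in normal coordinates, using the Morse--Bott lemma for $\varrho$ and a tubular neighbourhood of $\gamma$.
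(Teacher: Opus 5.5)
Your outline follows the paper's own proof. You work on $M\backslash\gamma$, get the periods from Lemma~\ref{lem:integral-homology-zeros} and Theorem~\ref{thm:sing-alpha}, use Proposition~\ref{prop:trans} plus equivariance for (b), Theorem~\ref{thm:KV25} for (c), and the circle-fibre argument of Lemma~\ref{lem:preimage} with $S^1$-invariance for (d)--(f). Your caution about the converse in (a) is justified: locally $\varrho=|z|^2\mu$ and $\alpha=\mathrm d\arg z+\mathrm d\log|z|$ give a smooth $\nu$ with the required periods, yet reconstruct $\psi=z|z|^{\mathrm i}$, which is not differentiable on $\gamma$. (Since $\alpha=\mathrm d\arg\psi$, the admissible singular part is $\pm\mathrm d\arg z$, not $\pm\frac12\mathrm d\arg z$.)

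The genuine gap is the step ``$\varphi$ is isotopic to the identity, hence the classes agree''. Both the exactness claim in (a) and your criterion `$[\alpha_1-\alpha_2]\neq0$ implies different components' in (b) rest on it. Homotopy invariance is a statement about closed forms on $M$, and $\alpha_1$ is not one, since $\mathrm d\alpha_1=2\pi\delta_\gamma$. What matters is the action of $\varphi$ on $H^1(M\backslash\gamma)$, and the isotopy $\varphi_t$ preserves $\gamma$ only at $t=1$. The final $\varphi$ may even permute components of $\gamma$ while reversing their induced orientations, so that the singular parts of $\alpha_1,\alpha_2$ differ. Since $\frac1{2\pi}\int_c\alpha$ is the winding number of $\psi|_c$, along $\psi_t=\varphi_t\cdot\psi_1$ this number jumps by $\pm1$ each time $\varphi_t(\gamma)$ crosses the loop $c$.

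This really happens. On $M=T^2=\mathbb R^2/\mathbb Z^2$ take $\psi_1$ with exactly two noncritical zeros, $p=(0,0)$ and $q=(0,\frac1{10})$. Let $\varphi_t$ be the time-$t$ flow of $\chi(y)\partial_x$, where $\chi$ equals $1$ near $y=0$ and vanishes for $|y|\ge\frac1{20}$. Then:
\begin{itemize}
\item $\varphi_1$ is the identity near $\gamma=\{p,q\}$, so $\psi_2=\varphi_1\cdot\psi_1$ equals $\psi_1$ near $\gamma$;
\item $\psi_2$ lies in the same component of $\Psi_{\cup\gamma}$ (through $\varphi_t\cdot\psi_1$) and has $\varrho_2\in\Dens_\gamma(M)$;
\item $\alpha_1-\alpha_2$ is smooth, but $\int_{\{x=1/2\}}(\alpha_1-\alpha_2)=\pm2\pi$, because $\varphi_t(p)=(t,0)$ crosses that circle once.
\end{itemize}
So the step cannot be repaired. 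The exactness assertion needs an extra hypothesis, e.g.\ restricting to diffeomorphisms isotopic to the identity through $\gamma$-preserving ones. The paper's sketch, too, only shows that $\alpha_1-\alpha_2$ defines a class in $H^1(M,2\pi\mathbb Z)$, not that this class vanishes.

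What you can prove in (b) is the bijection between components and orbits, directly. If $\Mom(\psi_2)\in\mathcal O_{\Mom(\psi_1)}$, then $\Mom(\psi_2)=\Mom(g\cdot\psi_1)$, and the circle-fibre property gives $\psi_2=\mathrm e^{\mathrm ic}g\cdot\psi_1$. This lies in the component of $\psi_1$ provided $g$ ranges over a connected group. That forces replacing $C^\infty(M,S^1)$ by its identity component, as your `exact forms $\mathrm da$' tacitly does. With the full group, already for nonvanishing $\psi$, multiplication by a non-null-homotopic $\mathrm e^{-\mathrm ia}$ changes the component but not the coadjoint orbit.
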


\begin{proof}[Proof of Theorem~\ref{thm:zeros}]

For the most part the proof repeats the steps of Theorem~\ref{thm:no-zeros} applied to $M\backslash \gamma$ instead of $M$. 
Note that while now the 1-form $\alpha$ has an integer class in this complement,  $[\alpha]\in H^1(M\backslash \gamma,2\pi\mathbb{Z})$,
its integrals over simple loops around components of $\gamma$ are   fully prescribed by the orientation of those components.
Namely, Lemma~\ref{lem:integral-homology} is to be adjusted as follows.

\begin{lemma}\label{lem:integral-homology-zeros}
     Given $\psi \in \WFgamma\subset C^\infty(M,\mathbb{C})$ with simple zeros on $\gamma$, the cohomology class $[\alpha]$ of the closed 1-form $\alpha$ belongs to $H^1(M\backslash \gamma,2\pi\mathbb{Z})$. Moreover, the periods of $\alpha$ over simple contours around components of 
     $\gamma$ are equal to $\pm 2\pi$ depending on the component's and contour's mutual orientations.
\end{lemma}

\begin{proof}
Just like in the proof of Lemma~\ref{lem:integral-homology} we fix an arbitrary $x_0 \in M\backslash\gamma$.
  Define  a multivalued function $\theta$ on $M\backslash \gamma$ by setting $\theta(x) = 2\int_{\ell}\alpha$ where $\ell$ is a curve such that $\ell(0) = x_0$ and $\ell(1) = x$. Now the integral depends only on the homotopy class of $\ell$ from $x_0$ to $x$ in $M\backslash\gamma$. If $\ell$ is a simple contour around $\gamma$, where $\psi$ has a simple zero, then $\theta/2$ has an increment $\pm 2\pi$.
    Since any two 1-forms $\alpha$ and $\tilde \alpha$
    have the same singularity on $\gamma$ (see Proposition~\ref{prop:trans} and Theorem \ref{thm:sing-alpha}), their increments on  
    such loops $C$ coincide.
    The rest of the proof  follows that of Lemma~\ref{lem:integral-homology}. Namely, the multivaluedness of $\theta$ 
    is defined by periods of $\alpha$:
    one has $\int_\ell \alpha - \int_{\tilde \ell}\alpha = \int_{\ell-\tilde \ell} \alpha$.
    Hence, a complex-valued function $\psi$ defined by $\psi =\sqrt{\rho \mathrm e^{\mathrm i \theta}}$ on $M\backslash \gamma$ 
  is univalued if and only if the periods of $\alpha$ belong to $2\pi \mathbb Z$, i.e., $[\alpha] \in H^1(M\backslash \gamma,2\pi\mathbb{Z})$.
\end{proof}

To complete the proof of Theorem~\ref{thm:zeros}, for (a)-(b) we note that the above constraint on the periods of $\alpha$ around $\gamma$ depending only on $\gamma$ itself, implies that different 1-forms $\alpha_1$
and $\alpha_2$ have the same periods, and hence their difference $\alpha_1-\alpha_1$ ``does not notice" $\gamma$ and defines a cohomology class
in $H^1(M, 2\pi\mathbb{Z})$, rather than in $H^1(M\backslash \gamma ,2\pi\mathbb{Z})$.
The item (c) is essentially a reformulation of Theorem~\ref{thm:KV25}. 
The $S^1$-fibers of the projection $\Mom$ in (d) are guaranteed by Lemma~\ref{lem:preimage}.
For (e) we notice that $\psi$ has no singularity on its zero set $\gamma$, so the integral extends from $M\backslash \gamma$ to $M$.
The statement (f) emphasizes that the $L^2$ metric on normalized wave functions in $S^\infty_1$ is $S^1$-invariant and descends to the corresponding coadjoint orbits,  thus providing  a Riemannian submersion.
\end{proof}

\begin{remark}\label{rem:switching}
This provides an alternative point of view on  the Wallstrom controversy \cite{Wa1994, Wa1994b}.
Indeed, only special coadjoint orbits 
on the hydrodynamical side are covered by the Madelung momentum map from the space of wave functions. These orbits are related to various types of zeros and connected components of the wave functions.

It is interesting to see how this hydrodynamical view is consistent with the Schr\"odinger  dynamics. 
In general, the symplectic space $C^\infty(M,\mathbb{C})$ of smooth wave functions on $M$ is projected into the Lie-Poisson space
$\mathfrak{dc}^*$, and its image contains a set of different orbits, i.e., the image is a union of symplectic leaves.
An initially nonvanishing  wave function $\psi_t$ satisfying the (nonlinear) Schr\"odinger equation 
\eqref{eq:schrodinger} 
may develop zeros during the evolution. For instance, suppose that some Schr\"odinger solution exists for 
$t\in [0,1]$, it has no zeros at $t=0$, and it first assumes zeros at a critical time $t=t_*$. 
As long as it remains nowhere vanishing on $M$ for $t\in [0, t_*)$ its image $\Mom(\psi_t)$ satisfies the barotropic system \eqref{eq:madelung_eq} for a quantum fluid 
on the hydrodynamical side. It belongs to one and the same coadjoint orbit 
$\mathcal O_{(\nu_0, \varrho_0)}\in \mathfrak{dc}^*$ with a nonvanishing density $\varrho_t$, as it is a Hamiltonian equation on the orbit, while the Madelung map is a symplectomorphism of the appropriate spaces.
(Note that all $\varrho_t$ for $t\in [0, t_*)$ are diffeomorphic to $\varrho_0$ by Moser's theorem.)
Assuming that for $t_*<t\le 1$ the zero set $\gamma_t$ of $\psi_t$ is noncritical, the image $\Mom(\psi_t)$  belongs to a different coadjoint orbit 
$\mathcal O_{(\nu_1, \varrho_1)}\in \mathfrak{dc}^*$ with $\varrho_1$ having zeros on  $\gamma=\psi_1^{-1}(0)$. It remains on this orbit as long as $\gamma_t$
does not change the topology, see Figure~\ref{fig:projection}. 

\begin{figure}
    \centering
    \includegraphics[scale=0.8]{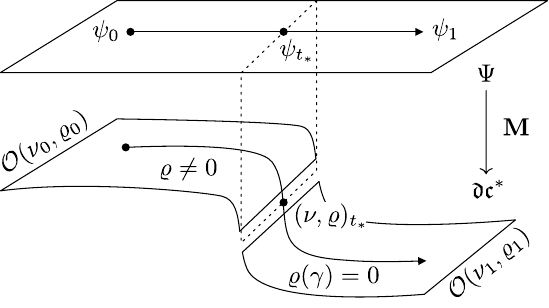}
    \caption{For a wave function $\psi_t$ satisfying the Schr\"odinger equation and
 developing zeros during the evolution, its $\Mom$-projection ``jumps" from one coadjoint orbit $\mathcal O_{(\nu_0, \varrho_0)}\in \mathfrak{dc}^*$ to another orbit  $\mathcal O_{(\nu_1, \varrho_1)}$ by switching them at the  time $t=t_*$.
 }\label{fig:projection}
\end{figure}

How the trajectory jumped from one coadjoint orbit to another? This happened when 
the wave function $\psi_{t}$ {\it had 0 as a critical value} at $t=t_*$. As $t\to t_*^-$ the corresponding density $\varrho_t$ was approaching the boundary of the density space $\Dens(M)$, and hence the boundary of the orbit $\mathcal O_{(\nu_0, \varrho_0)}\in \mathfrak{dc}^*$. After the critical moment $t=t_*$, the trajectory on the hydrodynamical side already
belongs to another orbit $\mathcal O_{(\nu_1, \varrho_1)}\in \mathfrak{dc}^*$, where the densities $\varrho_t$ for $t_*<t\le 1$ are Morse-Bott and diffeomorphic  to one and the same density $\varrho_1$
with degeneration along $\gamma$. 
This way, a trajectory $\psi_t$ in the symplectic space $C^\infty(M,\mathbb{C})$ is projected by the Poisson map $\Mom$ to a trajectory in the Poisson space $\mathfrak{dc}^*$ 
switching between symplectic leaves. Furthermore, since the trajectory is Hamiltonian on $\mathfrak{dc}^*$, it must approach the boundary with infinite speed in finite time $t_*$, since otherwise it would remain on the same leaf.
\end{remark}

\section{Reminder: the complex projective space as a toric variety}

Recall the classical setting of the torus action on the complex projective space.
This will later serve as a finite-dimensional model of the space of wave functions.

\subsection{The torus momentum map}
Let $z=(z_0, z_1,\dots,z_n)\in \mathbb{C}^{n+1}$ be  coordinates in the complex vector space
and consider the torus $\mathbb{T}^{n+1}$-action:
$z_j \to  t_j z_j$. Here $\mathbb{T}^{n+1}=(S^1)^{n+1}
=\{ (t_0, \dots, t_n)\in \mathbb C^{n+1} \mid |t_j|=1, \, j=0, \dots, n\}$.
The action is Hamiltonian for the symplectic structure $\mathrm i \, \mathrm d{z}\wedge \mathrm d\bar z $  on $\mathbb{C}^{n+1}$, while its 
 momentum map $\Mom_{\mathbb{T}}\colon \mathbb{C}^{n+1}\to \mathbb{R}^{n+1}$ has the form:
$$
\Mom_{\mathbb T}(z) = (|z_0|^2, |z_1|^2,\dots, |z_n|^2)\,. 
$$

The fiber of this map represents the configuration of phases.

The corresponding projective space $\mathbb{CP}^n$ (with homogeneous coordinates 
$[z]=(z_0:\dots :z_n)$) is equipped with the {\it Fubini--Study form} $\omega_{FS}$.  
It is a toric manifold with the natural action 
of the $n$-dimensional torus
$\mathbb T^n=\mathbb T^{n+1}/S^1$. This action $[z]:=[z_0:z_1:\dots:z_n]\mapsto [z_0:t_1 z_1:\dots:t_n z_n]$ is Hamiltonian, and abusing notations
we denote the corresponding momentum map by the same symbol:
$\Mom_{\mathbb{T}}: \mathbb{CP}^{n}\to \mathfrak{t}^*\simeq\mathbb{R}^{n}$, 
$$
\Mom_{\mathbb T}([z]) = \frac{1}{\sum_{j=0}^n |z_j|^2} \left( |z_1|^2, |z_2|^2, \dots, |z_n|^2 \right)\,,
$$
where for $[z]\in \mathbb{CP}^{n}$. Now the fibers represent the relative configuration of phases.
The image of the momentum map of any toric manifold is a convex polytope by the Atiyah--Guillemin--Sternberg theorem. For $\mathbb{CP}^n$, it is the  standard $n$-simplex 
$$
\Delta_n = \left\{ (x_1, \dots, x_n) \in \mathbb{R}^n \mid x_i \geq 0, \, \sum_{i=1}^n x_i \leq 1 \right\}\,.
$$
One can also describe this simplex in a more symmetric form (and more convenient for the infinite-dimensional generalization) as 
$$\Delta_n = \left\{ (x_0, x_1, \dots, x_n) \in \mathbb{R}^{n+1} \mid x_i \geq 0, \, \sum_{i=0}^n x_i =1 \right\}\,.
$$

\subsection{The $U(n+1)$ action and momentum map}
The larger group $G = U(n+1)$ acts transitively on $\mathbb{CP}^n$ via $A: [z] \mapsto [Az]$. This action is Hamiltonian, and its momentum map $\Mom_G$ takes values in the dual Lie algebra $\mathfrak{u}(n+1)^*$. Identify the latter with the space of Hermitian matrices.
One can show that the momentum map $\Mom_G: \mathbb{CP}^n \to \mathfrak{u}(n+1)^*$ is given by the rank-one projection matrix:
$$
\Mom_G([z]) = \frac{z  z^\dagger }{\|z\|^2} 
$$
for $z=(z_0, z_1,\dots,z_n)$.
Explicitly, the entry in the $j$-th row and $k$-th column is:
$\Mom_G([z])_{jk} =  {z_j \bar{z}_k}/{\sum_{l=0}^n |z_l|^2} $.
The image of  $\mathbb{CP}^n$ under the map $\Mom_G$ turns out to be a single coadjoint orbit of $U(n+1)$ in $\mathfrak{u}(n+1)^*$, for instance, the orbit of the matrix $\text{diag}(1, 0, \dots, 0)$.

In this setting the torus momentum map $\Mom_{\mathbb{T}}$ is obtained by taking the diagonal entries of the matrix $\Mom_G$ (while ignoring the constant $z_0$ term). Alternatively, one can consider the 
$SU(n+1)$-action, rather than $U(n+1)$ action, on $\mathbb{CP}^n$. This action is effective and its
maximal torus is already $\mathbb{T}^{n}$. The corresponding momentum map $\tilde{\Mom}_G:= \Mom_G-\frac{1}{n+1}I$ is the traceless part of $\Mom_G$ (which is equivalent to ignoring the constant $z_0$ term in another chart).  However, the $U(n+1)$ action is often more convenient for the infinite-dimensional analogy.

\begin{remark}
  To   summarize, the action of the torus $\mathbb{T}^n$ on $\mathbb{CP}^n$ is Hamiltonian and effective,
  while the action of the unitary group $U(n+1)$ is Hamiltonian and transitive. 
  The images of the corresponding momentum maps are the $n$-simplex in $\mathbb{R}^n$ in the toric case, and the full coadjoint orbit $\mathfrak{u}(n+1)^*$ diffeomorphic (and sympletomorphic) to $\mathbb{CP}^n$ in the unitary case. Notes that the simplex's faces and edges correspond to toric orbits of smaller dimensions, where one or several coordinates vanish, $z_j=0$. As we discuss below,  in the continuous case, the faces correspond to wave functions with zero sets such as $\gamma$ .
\end{remark}

\subsection{The Hopf fibration and prequantization}
The sphere $S^{2n+1}$ serves as the {\it prequantum bundle} for $(\mathbb{CP}^n, \omega_{FS})$. This relationship arises from the symplectic reduction of $\mathbb{C}^{n+1}$ by the diagonal $S^1$-action.

Namely, consider the unit sphere $S^{2n+1} = \{ z \in \mathbb{C}^{n+1} \mid \sum |z_j|^2 = 1 \}$. The principal $S^1$-bundle $\pi: S^{2n+1} \to \mathbb{CP}^n$ is equipped with a connection 1-form $\lambda \in \Omega^1(S^{2n+1})$ defined by:
\[ \lambda = \mathrm{Im}(\bar z \mathrm{d}z) =  \mathrm{Im}\Big( \sum_{j=0}^n \bar{z}_j \mathrm{d}z_j\Big) \]
This is the restriction of the canonical Liouville 1-form on $\mathbb{C}^{n+1}$. 
It satisfies the prequantization condition:
\[ \mathrm{d}\lambda = \pi^* \omega_{FS} \]
where $\omega_{FS}$ is the Fubini--Study symplectic form on $\mathbb{CP}^n$, which in turn  arises from the symplectic form $\mathrm i\,\mathrm{d} z\wedge \mathrm{d}\bar z $ on $\mathbb{C}^{n+1}$.

\subsection{A comment on weighted torus actions and Delzant polytopes}\label{sec:general_torus}
Consider a more general action of the $n$-dimensional torus $\mathbb{T}^n$. Namely,  assign an integer weight vector $\mathbf{w}_j \in \mathbb{Z}^n$ to each homogeneous coordinate $z_j, j=1, \dots ,n$. (One  can set $\mathbf{w}_0 = \mathbf{0}$ without loss of generality.) Then for an element $t = (t_1, \dots, t_n) \in \mathbb{T}^n$, the action on $\mathbb{CP}^n$ is 
$$
[z_0 : z_1 : \dots : z_n]\to [ z_0 : t^{\mathbf{w}_1} z_1 : \dots : t^{\mathbf{w}_n} z_n]\,,
$$ 
where $t^{\mathbf{w}_j} = \prod_{k=1}^n t_k^{w_{jk}}$.

One can see that this action is still Hamiltonian for the Fubini--Study symplectic structure $\omega_{FS}$ on $\mathbb{CP}^n$. 
The corresponding  momentum map $\Mom_{\mathbf w}: \mathbb{CP}^n \to \mathbb{R}^n$ is:
$$
\Mom_{\mathbf w}([z_0 : \dots : z_n]) =  \frac{\sum_{j=1}^n \mathbf{w}_j |z_j|^2}{\sum_{j=0}^n |z_j|^2}\,.
$$
The momentum image $\Delta^{\mathbf w}_n = \Mom_{\mathbf w}(\mathbb{CP}^n)$ is the convex hull of the images of the fixed points. The fixed points of the action are the coordinate points $P_0 = [1:0:\dots:0], \dots, P_n = [0:\dots:0:1]$. Their images are:
$$
\Mom_{\mathbf w}(P_0) = \mathbf{0}, \quad \Mom_{\mathbf w}(P_j) =  \mathbf{w}_j \text{ for } j = 1, \dots, n\,.
$$
The resulting momentum image is the simplex: $\Delta^{\mathbf w}_n = \text{conv}\{ \mathbf{0},  \mathbf{w}_1,  \mathbf{w}_2, \dots,  \mathbf{w}_n \}\,.$
\medskip

Recall that a convex polytope $\Delta \subset \mathbb{R}^n$ is called a \textit{Delzant polytope} 
if it satisfies the following three conditions  ensuring that the associated toric variety is a smooth compact symplectic manifold. Namely, the polytope is supposed to be \textit{simple} 
(i.e.,  there are exactly $n$ edges meeting at each vertex),  \textit{rational} (i.e., the edges 
meeting at each vertex have primitive integral vectors  $u_j\in \mathbb{Z}^n$ as their directions) 
and \textit{smooth} (i.e., satisfying the Delzant condition). 
The latter means that at each vertex, the set of primitive vectors $\{u_1, \dots, u_n\}$ forms a basis for the lattice $\mathbb{Z}^n$, or equivalently, $
\det(u_1, u_2, \dots, u_n) = \pm 1$.

The standard simplex $ \Delta_n=\Mom_{\mathbb T}(\mathbb{CP}^n)$ is Delzant. The 
simplex $\Delta^{\mathbf w}_n=\Mom_{\mathbf w}(\mathbb{CP}^n)$ discussed above remains a Delzant polytope if and only if the matrix $W = [\mathbf{w}_1 | \dots | \mathbf{w}_n]$ formed by the vectors $\mathbf{w}_j$ satisfies $\left|\det(W)\right| = 1$. If $\left|\det(W)\right| > 1$, the resulting space is a \textit{weighted projective space}, which contains orbifold singularities at the fixed points, and the polytope is called a {\it rational orbitope}.
(The weighted projective space is obtained  via symplectic reduction from $\mathbb C^{n+1}$ by a weighted $\mathbb C^*$-action.)

\begin{remark}
  There are also generalizations to manifolds $(\mathbb{CP}^{n},\mathcal{O}(k))$ representing the complex projective space with the $k$-th power of the hyperplane line bundle. Its momentum image is an $n$-dimensional simplex $\Delta_n$ scaled by $k$. 

In geometric quantization, the sections of the associated complex line bundle $L \to \mathbb{CP}^n$ 
are equivalent to $S^1$-equivariant functions on $S^{2n+1}$.  For the $k$-th power of the prequantum 
bundle (corresponding to $k\omega_{FS}$), the holomorphic sections are identified with homogeneous 
polynomials of degree $k$ on $\mathbb{C}^{n+1}$.
\end{remark}



\section{The finite-dimensional model of the Madelung transform}\label{sect:finiteMadelung}
The momentum map for $\mathbb{CP}^n$ can be interpreted as a finite-dimensional Madelung transform. 
The Madelung map decomposes a wave function $\psi=\sqrt{\rho \mathrm e^{\mathrm i\theta}}$ into its density $\rho$ and phase velocity $v=\nabla \theta$.

\subsection{Discrete densities and phases}
In a discrete (or finite-dimensional) setting one considers the scalar-valued wave function $\psi$ 
defined on several points, $M=\{q_0, \dots, q_n\}$ and $z_i=\psi(q_i)$. Alternatively, one can consider 
a vector valued wave function
$\vec{\psi}(q):=(\psi_0(q), \dots, \psi_n(q))=(z_0, \dots, z_n)\in \mathbb{C}^{n+1}$ defined 
over a single point $M=\{q\}$. 

Express  the coordinates in polar form $z_j = \sqrt{\rho_j \mathrm e^{\mathrm i \theta_j}}$. 
The torus $\mathbb{T}^{n+1}$-action, as before, is the coordinate-wise $S^1$-action:
$z_j \to t_j z_j$ with $t_j\in S^1$, while its 
 momentum map $\Mom_{\mathbb{T}}: \mathbb{C}^{n+1}\to \mathbb{R}^{n+1}$ has the form:
$$
\Mom_{\mathbb T}(z) =   (\rho_0, \rho_1, \dots, \rho_n)\,. 
$$

Note that this component-wise torus action exactly corresponds to the action by functions only, i.e., the action of the subgroup $\mathbb T^\infty\simeq\{\mathrm{id}\}\times C^\infty(M, S^1)\subset \DC$ in the continuous 
version. 
If $({\rm id}, a)\in \DC$ is an element of the abelian subgroup of functions acting on 
half-densities, its action is $ ({\rm id},a)\cdot \hd  = \hd\operatorname{exp}(-\mathrm i a /2)$ which 
boils down to the change of phases of $z_i=\psi(q_i)$ at the points $q_i$.
The corresponding momentum map is just the projection of Fusca's momentum map $\Mom$ to the second component and it is thus given by $\psi \mapsto \bar\hd\hd = \varrho$ in line with the discrete momentum map $\Mom_{\mathbb T}$ above.

Next, recall that the projective space $\mathbb{CP}^n$ with the torus $\mathbb T^n$-action  
has the momentum map 
$\Mom_{\mathbb{T}}: \mathbb{CP}^{n}\to \mathfrak{t}^*\simeq\mathbb{R}^{n}$, 
$$
\Mom_{\mathbb T}([z]) = \frac{1}{\sum_{j=0}^n\rho_j}(\rho_1, \rho_2, \dots, \rho_n)\,,
$$
where for $[z]\in \mathbb{CP}^{n}$ and $\rho_j=|z_j|^2$.
The image of $\mathbb{CP}^{n}$ under the momentum map $\Mom_{\mathbb{T}}$  is the 
standard $n$-simplex $\Delta_n$ in the $\rho$-coordinates: 
$$
\Delta_n = \left\{ (\rho_1, \dots, \rho_n) \in \mathbb{R}^n \mid \rho_i \geq 0, \, \sum_{i=1}^n \rho_i \leq 1 \right\}
$$
$$~ ~ \simeq\left\{ (\rho_0, \dots, \rho_n) \in \mathbb{R}^{n+1} \mid \rho_i \geq 0, \, \sum_{i=0}^n \rho_i =1 \right\}\,.
$$

One can see that the latter description of the convex set $\Delta_n $ corresponds
to the space $ \mathrm{ Dens}(M)=\{\varrho\in \Omega^n(M)~|~\varrho \ge 0, \int_M \varrho=1\}$ of densities on $M$ in the infinite-dimensional setting. We are particularly interested 
in the sets $\gamma=\{x\in M~|~\varrho(x)=0\}$, which correspond to faces of $\Delta_n$.
This way the (pointwise) phase rotation by the abelian group $ C^\infty(M,S^1)$ on the space  $C^\infty(M,\mathbb C)$ of wave functions $\psi$ can be regarded as the action of the infinite-dimensional torus $\mathbb{T}^\infty := C^\infty(M,S^1)$.

\medskip

\subsection{Relation to the Liouville form}
Now we confine to the unit 
sphere $\{\sum |z_i|^2=1\}=S^{2n+1}\subset \mathbb{C}^{n+1}$, corresponding to $\sum_{j=0}^n \rho_j = 1$.

\begin{proposition}
    The prequantization 1-form $\lambda$ in the Hopf fibration $S^{2n+1} \to \mathbb{CP}^n$  can be written in these coordinates as the Liouville form
$ \lambda = \frac{1}{2}\sum_{j=0}^n \rho_j \mathrm{d}\theta_j \,.$
\end{proposition}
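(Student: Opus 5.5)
We simply substitute the polar coordinates into $\lambda=\mathrm{Im}\big(\sum_j \bar z_j\,\mathrm d z_j\big)$ and compute coordinatewise. Write $z_j=\sqrt{\rho_j\mathrm e^{\mathrm i\theta_j}}=\sqrt{\rho_j}\,\mathrm e^{\mathrm i\theta_j/2}$, valid on the open dense set where all $z_j\neq0$. As in the proof of Lemma~\ref{lem:closed-alpha}, we have
\[
\mathrm d z_j=\frac{z_j}{2}\Big(\frac{\mathrm d\rho_j}{\rho_j}+\mathrm i\,\mathrm d\theta_j\Big),
\]
and therefore
\[
\bar z_j\,\mathrm d z_j=\frac{\rho_j}{2}\Big(\frac{\mathrm d\rho_j}{\rho_j}+\mathrm i\,\mathrm d\theta_j\Big)=\frac12\,\mathrm d\rho_j+\frac{\mathrm i}{2}\rho_j\,\mathrm d\theta_j .
\]
Taking imaginary parts kills the exact real term $\frac12\mathrm d\rho_j$ and leaves $\frac12\rho_j\,\mathrm d\theta_j$. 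Summing over $j$ gives $\lambda=\frac12\sum_{j=0}^n\rho_j\,\mathrm d\theta_j$ on $\mathbb C^{n+1}$, and hence on $S^{2n+1}$ by restriction. On the sphere the real parts $\frac12\sum_j\mathrm d\rho_j$ vanish identically, since $\sum_j\rho_j=1$. So there the formula holds even before taking imaginary parts: $\sum_j\bar z_j\,\mathrm d z_j=\frac{\mathrm i}{2}\sum_j\rho_j\,\mathrm d\theta_j$.

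The only genuine point is the loci where some $z_j=0$, i.e.\ the preimages of the faces of $\Delta_n$. There $\theta_j$ is undefined and the polar chart breaks down. We handle this as follows. The term $\rho_j\,\mathrm d\theta_j$ extends smoothly across $\{z_j=0\}$ because it equals $2\,\mathrm{Im}(\bar z_j\,\mathrm d z_j)=x_j\,\mathrm d y_j-y_j\,\mathrm d x_j$ in real coordinates $z_j=x_j+\mathrm i y_j$. Hence both sides are smooth 1-forms on $S^{2n+1}$ that agree on a dense open set, so they agree everywhere. This is the finite-dimensional counterpart of statement (e) of Theorem~\ref{thm:zeros}, where the integral extends over $\gamma$.

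Finally, $\mathrm d\lambda=\frac12\sum_j\mathrm d\rho_j\wedge\mathrm d\theta_j$ is the standard symplectic form in action–angle coordinates $(\rho_j,\theta_j/2)$ for the torus action. This confirms that $\lambda$ is the Liouville form whose differential descends to $\omega_{FS}$, consistent with Theorem~\ref{thm:no-zeros}(e). I expect no real obstacle; the extension across the coordinate hyperplanes is the only step requiring care.
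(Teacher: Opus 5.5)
Your proof is correct and follows the paper's route. Like the paper, you substitute $z_j=\sqrt{\rho_j}\,\mathrm e^{\mathrm i\theta_j/2}$ into $\mathrm{Im}\sum_j\bar z_j\,\mathrm dz_j$ and read off $\frac12\sum_j\rho_j\,\mathrm d\theta_j$; your remark that taking the imaginary part needs no sphere constraint, and your extension across $\{z_j=0\}$, are refinements the paper omits. One small slip: $2\,\mathrm{Im}(\bar z_j\,\mathrm dz_j)=2(x_j\,\mathrm dy_j-y_j\,\mathrm dx_j)$, not $x_j\,\mathrm dy_j-y_j\,\mathrm dx_j$, though this does not affect your smoothness argument.
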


Thus 
the Hopf fibration is the geometric realization of ``modding out'' the global phase while keeping the information of relative phases and local densities.

\begin{proof}
Represent the vector wavefunction $\vec{\psi} =(\psi_0, \dots, \psi_n)\in \mathbb{C}^{n+1}$  in polar form $\psi_j = \sqrt{\rho_j e^{\mathrm{i}\theta_j}}$, then the Liouville 1-form ${\rm Im} \sum\bar{\psi_j}\, \mathrm{d}\psi_j$ in
$\mathbb{C}^{n+1}$  expands as:
$$
\sum_{j=0}^n \bar{\psi}_j \mathrm{d}\psi_j = \sum_{j=0}^n \frac{1}{2}\left(  \mathrm{d}\rho_j + \mathrm{i} \rho_j \mathrm{d}\theta_j \right) \,.
$$
On the unit sphere $S^{2n+1}$, the total probability is conserved ($\sum \rho_j = 1$), which implies that $\sum \mathrm{d}\rho_j = 0$. 
Thus, the real part vanishes, and we obtain
$$ {\rm Im} \sum\bar{\psi_j} \mathrm{d}\psi_j = \frac12\sum_{j=0}^n \rho_j \mathrm{d}\theta_j \,,$$
which is the 1-form $\lambda$.
\end{proof}

The infinite-dimensional version of the Liouville 1-form $\lambda$ is the 1-form $\lambda_\psi$ described in Theorems \ref{thm:no-zeros} and \ref{thm:zeros}.

\begin{remark}
    The Liouville 1-form $\lambda$ can be regarded as the Berry connection. For an evolution along a closed path $\ell$ in $\mathbb{CP}^n$, the geometric (Berry) phase  $\theta_{B}$ is the path integral of this 1-form:
$$ e^{\mathrm{i} \theta_B} = \exp \left( \mathrm{i} \oint_{\ell} \lambda \right) = \exp \left( \oint_{\ell} \bar{\psi}\, \mathrm{d}\psi \right) \,.
$$
\end{remark}

\begin{remark}
 So far in the context of Madelung transform we confined ourselves to the case of \emph{simple zeros} 
 of wave functions. For instance, assume that on a transversal to its zero set $\gamma$ a wave function locally has the form $\psi(z)=z$ with  a simple zero on $\gamma:=\{z=0\}$. 
 The corresponding Berry connection for a simple loop $\ell=\{|z|=1\}$ around $\{z=0\}$ gives
 $\oint_{\ell} \bar{z}\, \mathrm{d}z=\int_0^{4\pi} e^{-\mathrm{i}\theta/2}\mathrm{d}e^{i\theta/2}=2\pi \mathrm{i}$.

 This can be compared with a wave function having  zero of higher multiplicity $m\ge 2$. For instance, for $\psi(z)=z^m$ one has 
 $\oint_{\ell} \bar{z}^m\, \mathrm{d}z^m=\int_0^{4\pi} e^{-\mathrm{i}m\theta/2}\mathrm{d}e^{\mathrm{i}m\theta/2}=2m\pi \mathrm{i}$.
This suggests that \emph{multiple zeros} of  wave functions can be regarded as analogues of more general torus actions (weighted actions with faster rotation of the corresponding coordinates), as discussed in Section \ref{sec:general_torus}. The corresponding zero sets $\gamma$
are not necessarily smooth in that case, since now they are critical levels of wave functions, while the Morse-Bott theorem for volume forms is not directly applicable. 
However, the analogy seems to extend naturally to the 
case of higher multiplicity for zeros of wave functions, and one can still regard the case of 
$\gamma$ with higher multiplicity as analogues of the faces of the momentum polytope with different slopes.
Also, it is well-known that if some weights are collinear, the image of the torus action on $\mathbb{CP}^n$ is a  lower-dimensional polytope. This corresponds to wave functions $\psi$ having more degenerate sets of zeros $\gamma$, for instance, of co-dimension less than 2. 

This opens a way to study analogues of general 
 Delzant polytopes, as well as their degeneracies,  in the infinite-dimensional context of the Madelung transform.
\end{remark}


\subsection{The unitary group action and symplectomorphisms}

Now consider the unitary group $G=U(n+1)$ acting on $[\vec{\psi}]=(\psi_0:\dots:\psi_n)\in \mathbb{CP}^n$. 
As we discussed above, in this case $\mathbb{CP}^n$ is symplectomorphic to its
 image under the corresponding momentum map $\Mom_G$, and it is  
 a single coadjoint orbit of in $\mathfrak{u}(n+1)^*$.
This is exactly a finite-dimensional analogue of the symplectomorphism statement about Madelung transform
given in Theorem \ref{thm:zeros}, where the abelian subgroup of functions is an analogue of the torus action, while the whole semi-direct product group 
$\DC = \mathrm{Diff}(M)\ltimes C^\infty(M, S^1)$ corresponds to the full 
unitary group $G=U(n+1)$, and the momentum map is a symplectomorphism.

\begin{remark}
While the group $U(n+1)$ could be regarded as an analogue of the group of all unitary maps of $L^2(M,\mathbb{C})\to L^2(M,\mathbb{C})$ in the continuous  setting, it is convenient to consider 
 in  infinite dimensions its much smaller subgroup $\DC=\mathrm{Diff}(M)\ltimes C^\infty(M, S^1)$. Indeed, as we discussed, its unitary  action on $C^\infty(M,\mathbb{C})$ is transitive on  the spaces of wave functions with a given type of zero sets $\gamma$, and this transitivity is exactly what is needed for the identification of those   spaces with the corresponding coadjoint orbits in $\mathfrak{dc}^*$.

This also shed some light on the origin of the group $\DC$. Its Lie algebra $\mathfrak{dc} = \mathfrak{X}(M)\ltimes C^\infty(M,\mathbb{R})$ can be understood as a Lie subalgebra in $\mathfrak{ham}(T^*M)$, since any diffeomorphism on $M$
gives rise to a Hamiltonian diffeomorphism of it cotangent bundle $T^*M$, while the factor $C^\infty(M,\mathbb{R})$ corresponds to (also Hamiltonian) shifts of the fibers. On the other hand, the group 
$\mathrm{Ham}(T^*M)$ acting on functions on $T^*M$ can be regarded as a natural analogue of the finite-dimensional unitary group. 
\end{remark}

\subsection{Fisher--Rao metric and $\mathbb{CP}^n$}
Furthermore, the momentum map is not only symplectic 
but it is also an isometry (and hence K\"{a}hler)
in both finite- and infinite-dimensional settings.

In finite dimensions, 
the Fubini--Study metric on $\mathbb{CP}^n$ is obtained by the projection of the round metric on $S^{2n+1}$, which in turn, is simply the restriction of the Euclidean metric $\sum |\mathrm{d}z_j|^2$ to the sphere. 
Given the principal $S^1$-bundle $\pi: S^{2n+1} \to \mathbb{CP}^n$ and the connection 1-form $\lambda$, the total metric on the sphere $g_{S^{2n+1}}$ can be reconstructed from the base metric via:
$g_{S^{2n+1}} = \pi^* g_{FS} + 2 \lambda \otimes \lambda $. 
(This is an example of the Sasaki lift in information geometry.)     

\begin{proposition}
In polar coordinates, 
the Fubini--Study metric on $\mathbb{CP}^n$ decomposes into a classical density part and a quantum phase part:
\[ 
    g^{\it FS} = \underbrace{\frac{1}{2} \sum_{j=0}^n \frac{\mathrm{d}\rho_j^2}{\rho_j}}_{\text{Fisher--Rao}} +\underbrace{\frac{1}{2}\big(\sum_{j=0}^n \rho_k \mathrm{d}\theta_k^2\big) - 2\, \lambda\otimes\lambda}_{\text{Sasaki extension}}
\]
where $\lambda = \frac{1}{2}\sum_j \rho_j \mathrm{d}\theta_j$ is the Berry connection.   
\end{proposition}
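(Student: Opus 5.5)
The plan is to compute directly in the polar coordinates $z_j=\sqrt{\rho_j\mathrm e^{\mathrm i\theta_j}}=\sqrt{\rho_j}\,\mathrm e^{\mathrm i\theta_j/2}$ on the unit sphere $S^{2n+1}$, and then to obtain $g^{FS}$ as the metric induced on the horizontal distribution of the Hopf bundle $\pi\colon S^{2n+1}\to\mathbb{CP}^n$. To match the normalization used in Theorem~\ref{thm:no-zeros}(f) and in the relation $g_{S^{2n+1}}=\pi^*g_{FS}+2\lambda\otimes\lambda$, I take the metric on $\mathbb C^{n+1}$ to be $2\,\mathrm{Re}\sum_j \mathrm d\bar z_j\,\mathrm dz_j$. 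This is the real part of the same Hermitian form whose imaginary part gives the symplectic structure.

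First I would compute the differentials. From the proof of Lemma~\ref{lem:closed-alpha}, $\mathrm dz_j=\frac{z_j}{2}\big(\mathrm d\rho_j/\rho_j+\mathrm i\,\mathrm d\theta_j\big)$, so
\[
2|\mathrm dz_j|^2=\tfrac12\Big(\frac{\mathrm d\rho_j^2}{\rho_j}+\rho_j\,\mathrm d\theta_j^2\Big).
\]
Summing over $j$ and restricting to $\sum_j\rho_j=1$ gives the round metric on $S^{2n+1}$:
\[
\tfrac12\sum_j\frac{\mathrm d\rho_j^2}{\rho_j}+\tfrac12\sum_j\rho_j\,\mathrm d\theta_j^2 .
\]
This is exactly the computation already carried out in the proof of Theorem~\ref{thm:no-zeros}(f), now done pointwise.

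Next I would split off the vertical direction. The fibre of the Hopf bundle is generated by $z\mapsto \mathrm e^{\mathrm i s}z$, which in polar coordinates is the vector field $V$ with $\dot\rho_j=0$ and $\dot\theta_j=2$ for all $j$. Its squared length is $\tfrac12\sum_j\rho_j\cdot 4=2$. Its metric dual is the 1-form $g(V,\cdot)=\sum_j\rho_j\,\mathrm d\theta_j=2\lambda$, using the Liouville form $\lambda=\tfrac12\sum_j\rho_j\,\mathrm d\theta_j$ from the preceding proposition.

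The Fubini--Study metric is the round metric restricted to the horizontal distribution $\ker\lambda$. Written as a tensor upstairs, this means subtracting the vertical part:
\[
\pi^*g_{FS}=g_{S^{2n+1}}-\frac{g(V,\cdot)\otimes g(V,\cdot)}{g(V,V)}=g_{S^{2n+1}}-\frac{(2\lambda)\otimes(2\lambda)}{2}=g_{S^{2n+1}}-2\lambda\otimes\lambda .
\]
Combining this with the previous display gives the claimed decomposition, with the Fisher--Rao part coming from the $\rho$-terms. The resulting tensor annihilates $V$ and is invariant under the $S^1$-action, since $\rho_j$ and the differentials $\mathrm d\theta_j$ are unchanged by constant shifts of the $\theta_j$. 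Hence it descends to $\mathbb{CP}^n$.

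The only delicate point is bookkeeping of the constants: the factor $2$ in the metric, and the factor $\tfrac12$ that comes from $\theta_j$ being the argument of $z_j^2$ rather than of $z_j$. I would check these once against the identity $g_{S^{2n+1}}=\pi^*g_{FS}+2\lambda\otimes\lambda$ stated just before the proposition, which the computation above reproduces. I would also note that the formula is valid only where all $\rho_j>0$, since polar coordinates degenerate on the faces $\{\rho_j=0\}$, and that the metric extends smoothly there.
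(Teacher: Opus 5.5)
Your proposal is correct and follows essentially the same route as the paper: you compute $2|\mathrm dz_j|^2$ in polar coordinates to get the round metric, then remove the vertical direction by subtracting $2\lambda\otimes\lambda$. Your way of obtaining that coefficient, via the orthogonal projection $g-\frac{g(V,\cdot)\otimes g(V,\cdot)}{g(V,V)}$ with $g(V,\cdot)=2\lambda$, is slightly more complete than the paper's: the paper fixes the coefficient only by requiring that the tensor vanish on $c\sum_k\partial/\partial\theta_k$, which presupposes that the correction is a multiple of $\lambda\otimes\lambda$, whereas your formula derives this.
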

\begin{proof}
    We have $z_k = \sqrt{\rho_k \mathrm{e}^{\mathrm i \theta_k}}$, so
    \begin{equation*}
        \mathrm{d}z_k = \frac{\mathrm e^{\mathrm i \theta_k}\mathrm{d} \rho_k + \mathrm i z_k^2 \mathrm{d}\theta_k}{2 z_k} = \frac{\mathrm e^{\mathrm i\theta_k/2}\mathrm{d}\rho_k}{2\sqrt{\rho_k}} + \frac{\mathrm i z_k \mathrm{d}\theta_k}{2}
    \end{equation*}
    and 
    \begin{equation*}
        \mathrm{d}\bar{z}_k = \frac{\mathrm e^{-\mathrm i \theta_k}\mathrm{d} \rho_k - \mathrm i \bar{z}_k^2 \mathrm{d}\theta_k}{2 \bar{z}_k} = \frac{\mathrm e^{-\mathrm i\theta_k/2}\mathrm{d}\rho_k}{2\sqrt{\rho_k}} - \frac{\mathrm i \bar{z}_k \mathrm{d}\theta_k}{2}\,.
    \end{equation*}
    Thus,
    \begin{equation*}
        \mathrm{d}z_k\otimes \mathrm{d} \bar{z}_k + \mathrm{d}\bar{z}_k\otimes \mathrm{d} z_k = 
        \frac{\mathrm{d}\rho_k \otimes \mathrm{d}\rho_k}{2 \rho_k} + \frac{\rho_k 
        \mathrm{d} \theta_k \otimes \mathrm{d}\theta_k}{2}.
    \end{equation*}
    
    The total metric tensor should vanish for the vertical vector $v = c \sum \frac{\partial}{\partial \theta_k}$.
    We have that
    \begin{equation*}
        \big(\frac{1}{2}\sum_k \rho_k \mathrm{d}\theta_k \otimes \mathrm{d}\theta_k \big) (v,v) = \frac{1}{2}c^2 \sum_k \rho_k = \frac{1}{2} c^2.
    \end{equation*}
    Since $\lambda = \frac{1}{2}\sum_k \rho_k \mathrm d\theta_k$ we also have
    $
        \lambda\otimes\lambda (v,v) = \frac{1}{4} c^2 \big(\sum_k \rho_k \big)^2 = \frac{1}{4} c^2.
    $
    Thus, the metric is
    \begin{equation*}
        \frac{1}{2}\big( \sum_k \frac{\mathrm{d}\rho_k\otimes \mathrm{d}\rho_k}{\rho_k}\big) +\frac{1}{2}\big(\sum_k \rho_k \mathrm{d}\theta_k\otimes \mathrm{d}\theta_k\big) - 2\lambda\otimes\lambda .
    \end{equation*}
    Notice, as expected, that a vector $w$ is horizontal if $\iota_w \lambda = 0$.
\end{proof}

\begin{remark}
    The Fubini--Study metric $g^{\it FS}$ can also be written
    \[ 
        g^{\it FS} = \frac{1}{2} \sum_{j=0}^n \frac{\mathrm{d}\rho_j^2}{\rho_j} + \frac{1}{2}\sum_{j=0}^n \rho_j (\mathrm{d}\theta_j - 2\lambda)^{\otimes 2},
    \]
    provided that $\sum_j \rho_j = 1$.
    Indeed, the form above can be seen as the variance form of the metric corresponding to the weighted variance of the phase differentials $\mathrm{d}\theta_j$ with respect to the probability distribution $\rho_j$, and the relation to the first form of the metric is obtained via the variance formula
    \begin{multline*}
        \frac{1}{2}\sum_{j=0}^n \rho_j (\mathrm{d}\theta_j - 2\lambda)^{\otimes 2}
        = \frac{1}{2}\sum_j \rho_j \mathrm{d}\theta_j^2 + 2\lambda^{\otimes 2} - 4\lambda^{\otimes 2} = \frac{1}{2}\sum_j \rho_j \mathrm{d}\theta_j^2 -2\lambda^{\otimes 2}.
    \end{multline*}
\end{remark}

Note that the inclusion of $\lambda$ ensures $S^1$-invariance of the metric, so that it descends to $\mathbb{CP}^n$. Here the first term is the Fisher--Rao metric, representing the information geometry of the  simplex $\Delta_n$ of classical probability distributions.  The second term stands for the phase geometry related to quantum mechanics.

\medskip

One can compare the above formula with its infinite-dimensional version.
In infinite dimensions for the case a simply-connected manifold $M$ and wave functions without zeros
this was proved in \cite{KhMiMo2019}. Namely, the corresponding space $\mathbb{P}C^\infty(M, \mathbb{C}^*)\subset\mathbb{CP}^\infty$ equipped with the Fubini--Study metric
was shown to be isometric to the cotangent bundle $T^*\Dens(M)$ of the density space on $M$, which is equipped with the {\it Fisher--Rao-Sasaki  metric}:
$$
 g^{\it FRS}_{(\varrho,[\theta])}((\dot \varrho,\dot\theta) , (\dot \varrho,\dot\theta)):=\frac{1}{2}\int_M \frac{\dot\varrho^2}{\varrho}    +\frac{1}{2}\int_M \dot\theta^2\varrho\,  - 2 \big(\underbrace{\frac{1}{2}\int_M \dot\theta \varrho}_{\text{Liouville}} \big)^2.
$$
This is an extension of the Fisher--Rao metric on $\Dens(M)$, given by the first term depending only on densities (or probability distributions) $\varrho$, to the cotangent bundle $T^*\Dens(M)$.

\begin{theorem}[\cite{KhMiMo2019}]
The Madelung transform is an  isometry (and hence a  K\"{a}hler  map) between 
the spaces $T^*\Dens(M)$ equipped with the Fisher--Rao-Sasaki  metric
and $\mathbb P\WFnonzero = \mathbb P C^{\infty}(M,\mathbb C^*)$ equipped with the  Fubini--Study metric.
\end{theorem}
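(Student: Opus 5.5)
The plan is to reduce the statement to the finite-dimensional computation just carried out for $\mathbb{CP}^n$, applied pointwise on $M$. Throughout, $\psi=\sqrt{\rho\mathrm e^{\mathrm i\theta}}$, $\hd=\psi\sqrt\mu$ and $\varrho=|\hd|^2$. By Theorem~\ref{thm:no-zeros}(c), the composition $\Mom\circ\mathbf S^{-1}$ is already known to be a symplectomorphism onto the coadjoint orbit, and on non-simply-connected $M$ it is extended to all components via the shift $\mathrm d\theta\mapsto \mathrm d\theta+\alpha_0$. So it suffices to prove the isometry statement. The Kähler property then follows, because a map that preserves both the metric and the symplectic form automatically intertwines the complex structures $J=g^{-1}\omega$.

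\textbf{Step 1 (lift the metric).} A tangent vector at $\psi$ is $\dot\psi=\tfrac12(\dot\varrho/\varrho+\mathrm i\dot\theta)\psi$. As in the proof of Theorem~\ref{thm:no-zeros}(f), the real part of the $L^2$ inner product is
\[
2\,\mathrm{Re}\int_M\overline{\dot\hd}\dot\hd=\tfrac12\int_M\frac{\dot\varrho^2}{\varrho}+\tfrac12\int_M\dot\theta^2\varrho .
\]
This is the exact analogue of $\sum_k(\mathrm dz_k\otimes\mathrm d\bar z_k+\mathrm d\bar z_k\otimes \mathrm dz_k)$ computed in the proposition above, with the sum over $k$ replaced by integration over $M$.

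\textbf{Step 2 (quotient by the phase circle).} The Fubini--Study metric on $\mathbb P\WFnonzero$ is the quotient of the round metric on the unit sphere $S^\infty_1$ by the global $S^1$-action. Its vertical direction is $\dot\theta\equiv c$, generated by $\mathrm i\psi$. The vertical component of a tangent vector is measured by the connection form $\lambda_\psi[\dot\psi]=\tfrac12\int_M\dot\theta\varrho$ from Theorem~\ref{thm:no-zeros}(e), using $\int\dot\varrho=0$ on the sphere. Subtracting the vertical part gives the horizontal norm $g_{S^\infty_1}-2\lambda\otimes\lambda$, which is precisely $g^{FRS}$. Equivalently, by the variance identity in the preceding remark, it is
\[
\tfrac12\int\dot\varrho^2/\varrho+\tfrac12\int(\dot\theta-2\lambda)^2\varrho .
\]
This expression depends only on the coset $[\dot\theta]$, because shifting $\dot\theta$ by a constant $c$ shifts $2\lambda$ by the same $c$. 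Hence $g^{FRS}$ is well defined on $T^*\Dens(M)$. It also agrees with the normalized representative $\int\dot\theta\varrho=0$ used in Theorem~\ref{thm:no-zeros}(f).

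\textbf{Step 3 (conclude).} The map $\mathbf S$ factors through $\mathbb P\WFnonzero$ by Lemma~\ref{lem:preimage}, so Step 2 says precisely that the pullback of $g^{FRS}$ equals the Fubini--Study metric. Combined with the symplectomorphism, this shows $\mathbf S$ is Kähler.

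The main obstacle I expect is the functional-analytic bookkeeping in Step 2. One must justify that the horizontal/vertical splitting in $T_\psi S^\infty_1$ is well defined in the Fréchet setting, and that $\dot\theta$ is determined only modulo constants. The second point is handled by the invariance under $\dot\theta\mapsto\dot\theta+c$ noted above. The first I would handle directly: the vertical line is one-dimensional and spanned by $\mathrm i\psi$, so orthogonal projection onto its complement is given by an explicit formula and raises no closedness issue.
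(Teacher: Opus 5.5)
Your proposal is correct and follows essentially the paper's argument. You use the pointwise polar-coordinate computation of $2\,\mathrm{Re}\int_M\overline{\dot\hd}\dot\hd$ from the proof of Theorem~\ref{thm:no-zeros}(f), and you remove the vertical direction via the connection $\lambda$ as in the finite-dimensional $\mathbb{CP}^n$ proposition; the paper instead takes the horizontal representative $\int_M\dot\theta\varrho=0$ directly, which your variance form shows is equivalent. Keep explicit that your ``round metric'' is $2\,\mathrm{Re}\langle\cdot,\cdot\rangle_{L^2}$, as in those computations, because the Hermitian expression for $g^{FS}$ displayed just before the theorem pulls back to $\tfrac12 g^{FRS}$.
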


Recall that the (infinite-dimesional) {\it Fubini--Study metric}  on $\mathbb{P}\WF=\mathbb P C^\infty(M,\mathbb C)$ is
$$
g^{\it FS}_\psi(\dot\psi, \dot\psi):=\frac{\langle\dot\psi,\dot\psi\rangle}{\langle\psi, \psi \rangle} - \frac{\langle\psi,\dot\psi\rangle\langle\dot\psi,\psi\rangle}{\langle\psi, \psi \rangle^2}
$$
where $\psi$ is a representative wave function in the coset $[\psi]\in \mathbb P \WF$, and 
$\langle\psi, \psi \rangle$ is the $L^2$ inner product on $C^\infty(M,\mathbb C)$.


\subsection{The comparison of finite- and infinite-dimensional convexity}\label{sect:table}

Here is an informal table indicating the analogues of the classical convexity theorem in finite dimensions and the 
infinite-dimensional convexity implied by the Madelung transform.

\begin{center}
\renewcommand{\arraystretch}{1.35}
\begin{longtable}{p{0.39\textwidth} p{0.55\textwidth}}
\toprule
\textbf{Finite-dimensional toric \newline geometry} &
\textbf{Madelung / $\DC$ Hamiltonian \newline geometry} \\
\midrule
\endhead

Unitary group
$
U(n+1)
$
acts on
$
\mathbb{CP}^n
$
&
Semidirect product
$
\DC
$
acts on
$
\mathbb{P}\WF
$
\\

Maximal torus
$
\mathbb T^n=T^{n+1}/S^1
$
&
Infinite-dimensional phase torus
$
\mathbb T^\infty=C^\infty(M,S^1)
$
acting by pointwise phase rotations
\\

Normalizer
$
N(\mathbb T^n)\subset U(n+1)
$
&
Normalizer

$N(\mathbb T^\infty)
=
{\rm Diff}(M)\ltimes C^\infty(M,S^1)
=
\DC
$
\\

Weyl group \newline 
$
W=N(\mathbb T^n)/\mathbb T^n
\simeq S_{n+1}
$
&
Weyl-type quotient
$
N(\mathbb T^\infty)/\mathbb T^\infty
\simeq {\rm Diff}(M)
$
\\

Momentum map \newline
$
\Mom_{\mathbb T}\colon\mathbb{CP}^n\to{\mathfrak t^n}^{*}\simeq \mathbb R^n
$
&
Momentum map \newline
$
\Mom_{\mathbb T^\infty}\colon\mathbb{CP}^{\infty}\to
{\mathfrak t^\infty}^{*}
\simeq \Omega^n(M)
$
\\

Moment image\newline
$
\Delta^n
=
\{x_i\ge0,\ \sum x_i=1\}
$
&
Probability polytope (after completion)\newline
$
\overline\Dens(M)
=
\{\varrho\ge0,\ \int_M\varrho=1\}
$
\\

Interior of simplex
&
Smooth positive densities
\\

Faces
$
\Delta^k\subset \Delta^n
$
&
Faces
$
F_\Gamma
=
\{\varrho\in\overline\Dens(M)\mid
\varrho(\Gamma)=0\}
$
\\

Stratification by faces
&
Stratification by support type
\\

Vertices
$
e_i
$
&
Dirac measures
$
\delta_x
$
\newline
(in the measure completion)
\\

Weyl group permutes vertices
&
{\rm Diff}(M) moves Dirac masses
$
\varphi_*\delta_x=\delta_{\varphi(x)}
$
\\

Parabolic subgroup
$
W_I
$ \newline 
preserves a face $I$
&
Subgroup
$
{\rm Diff}_\Gamma(M)
\coloneqq
\{\varphi~|~\varphi(\Gamma)=\Gamma\}
$ \newline 
preserves the face
$
F_\Gamma
$
\\

Convex combination
$
\sum_i a_i e_i
$
&
Barycentric decomposition
$
\varrho
=
\int_M \delta_x\, \rho(x)
$
\\

Vertices span all faces
&
Dirac masses generate all measures
\\

\bottomrule
\end{longtable}
\end{center}

Note that the closest structural counterpart for the Weyl group $W$, which plays a central role in the Schur-Horn-Kostant theorem, a precursor of the convexity results of Atiyah and Guillemin-Sternberg, is the action of the diffeomorphism group 
${\rm Diff}(M)$ on the ``continuous polytope" $\overline\Dens(M)$ of probability measures. By Moser's theorem, ${\rm Diff}(M)$ acts transitively on the interior consisting of smooth positive densities of fixed total mass. This differs from the intriguing approach of \cite{bloch1993schur} discussed below,
where an analogue of the Weyl group for the group of area-preserving diffeomorphisms of an annulus was a certain extension of invertible measure preserving transformations of an interval. 


\subsection{Infinite-dimensional torus actions for symplectomorphisms}\label{sect:symplectomorphisms}
The existence of infinite-dimensional analogues of
maximal tori has been studied in several papers from a different perspective, which we sketch here and compare with the one above. 
In \cite{bao1997maximal, bloch1993schur} the authors considered 
the group ${\rm Diff}_{\omega}(A) $ of area-preserving diffeomorphisms of the annulus $A = [0, 1]\times S^1$ with the standard area form $\omega$ and
showed that the set of all ``pure twist” maps of Sobolev class $H^s$ with $s > 2$
$$
\mathcal T=\{\eta_\phi~|~\eta_\phi(z,\theta)=(z, \theta+\phi(z)), \,\, \phi:[0,1]\to \mathbb R \text{~ is in ~} H^s\}
$$
share some fundamental properties with finite-dimensional maximal tori in
compact Lie groups. Namely, topologically $\mathcal T$ is  a real infinite-dimensional connected closed submanifold of the symplectomorphism group ${\rm Diff}_{\omega}(A)$ relative to the $H^s$ topology.
On the other hand, algebraically, $\mathcal T $ is a maximal abelian subgroup of 
${\rm Diff}_{\omega}(A) $, while geometrically
$\mathcal T$ is a totally geodesic and flat Riemannian submanifold of ${\rm Diff}_{\omega}(A)$
with respect to a natural right-invariant $L^2$-metric.

In \cite{bloch1993schur} the authors considered a certain completion of the group ${\rm Diff}_{\omega}(A)$ to produce a nontrivial Weyl group and proved an infinite-dimensional version of 
the Schur-Horn-Kostant convexity theorem for 
the torus $\mathcal T $-action on a typical coadjoint orbit of ${\rm Diff}_{\omega} (A)$. 
In this setting, the Lie algebra of $\mathcal T $ is identified with functions on $A = [0, 1]\times S^1$ that depend only on the first coordinate, and the role of the momentum map
$\Mom_{\mathcal T }\colon \mathfrak{X}^*_\omega (A)\to \mathfrak t^* $  is played by the projection
$
\Mom_{\mathcal T }\colon f(z, \theta) \mapsto \int_0^1 f(z, \theta)\, d\theta\,.
$

\begin{theorem}[\cite{bloch1993schur}]
Let $\Lambda \in L^2(A)$ be a bounded nonincreasing right-continuous function of $z$, 
and let $\mathcal O_\Lambda$ be the orbit   through $\Lambda$ for a suitable completion of measure-preserving transformations of $A$. Then $\Mom_{\mathcal T } (\mathcal O_\Lambda)\subset 
L^2[0, 1]$ is a weakly compact convex set. Its set of extreme
points is the orbit $W\cdot\Lambda$  of the Weyl semigroup through $\Lambda$. 
\end{theorem}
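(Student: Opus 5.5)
The plan is to follow the architecture of the original Schur--Horn--Kostant argument, transported to the annulus by reducing the problem to the interval $[0,1]$. Functions of $z$ alone form the Lie algebra of $\mathcal T$, and averaging over $\theta$ is the orthogonal $L^2$-projection onto this subspace. So $\Mom_{\mathcal T}$ is the restriction to $\mathcal O_\Lambda$ of a bounded linear map $P\colon L^2(A)\to L^2[0,1]$.

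The first step is to identify the closure of $\mathcal O_\Lambda$ in the completion. Every element of the orbit is of the form $\Lambda\circ\varphi^{-1}$ for a measure-preserving $\varphi$. I would therefore take the completed orbit to be the set of all $f\in L^2(A)$ that are \emph{equimeasurable} with $\Lambda$, that is, whose distribution function $t\mapsto |\{f>t\}|$ equals that of $\Lambda$. Its weak closure is the set of $f$ that are \emph{majorized} by $\Lambda$ in the Hardy--Littlewood--P\'olya sense. Writing $f^*$ for the nonincreasing rearrangement on $[0,1]$, this means
\[
\int_0^s f^*\le\int_0^s\Lambda \quad\text{for all } s\in[0,1],
\qquad\text{with equality at } s=1 .
\]
The second step shows that $P$ maps equimeasurable functions into this majorization set. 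Averaging in $\theta$ is a doubly stochastic (Markov, measure-preserving) operator, so by the Hardy--Littlewood inequality $P f\prec f$. Hence $\Mom_{\mathcal T}(\mathcal O_\Lambda)\subset K_\Lambda:=\{g\in L^2[0,1]\mid g\prec\Lambda\}$.

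The third step establishes convexity, weak compactness, and the reverse inclusion. The set $K_\Lambda$ is convex and norm-bounded, because $|g|\le\|\Lambda\|_\infty$ a.e. It is weakly closed, since it is an intersection of weakly closed half-spaces: by Ryff's description, $K_\Lambda$ is the closed convex hull of the rearrangements of $\Lambda$. It is therefore weakly compact. For surjectivity onto $K_\Lambda$ I would invoke Ryff's theorem: every $g\prec\Lambda$ equals $D\Lambda$ for some doubly stochastic operator $D$ on $[0,1]$. The aim is then to realize $D$ by a measure-preserving map of the annulus. The idea is to use the $\theta$-direction as the auxiliary randomization, i.e.\ to choose $\varphi$ so that the fiber over $z$ distributes its $\theta$-mass according to the kernel of $D$. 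Then $P(\Lambda\circ\varphi^{-1})=D\Lambda=g$. This is where the completion by (not necessarily smooth or invertible) measure-preserving transformations is essential, and it is the main obstacle: constructing $\varphi$ from an arbitrary Markov kernel needs a measurable-isomorphism argument, namely a disintegration plus the existence of measure-preserving isomorphisms between standard probability spaces. I would first handle kernels that are finite convex combinations of permutations of subintervals, and then pass to the limit using weak density and the closedness of the image.

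Finally, for the extreme points I use the fact that the extreme points of the majorization set $K_\Lambda$ are exactly the functions equimeasurable with $\Lambda$ on $[0,1]$, namely $\Lambda\circ\sigma$ for measure-preserving $\sigma$ of the interval. These are precisely the Weyl semigroup orbit $W\cdot\Lambda$, where $W$ acts as the completed "permutations" of $[0,1]$. To show that such points are extreme, I would use strict inequality in Hardy--Littlewood for any nontrivial averaging. Conversely, any non-rearrangement $g\prec\Lambda$ is a nontrivial average of two elements of $K_\Lambda$, obtained by a perturbation on a level set where the majorization inequality is strict.
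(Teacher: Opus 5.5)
The paper gives no proof of this statement; it only quotes it from \cite{bloch1993schur}, where, as far as one can reconstruct it, the argument rests on Ryff's results on rearrangements and doubly stochastic operators, which is also the skeleton you use. Several of your steps are fine as they stand. $\Mom_{\mathcal T}$ is a conditional expectation, so $\Mom_{\mathcal T}f\prec f$ and the image lies in $K_\Lambda=\{g\prec\Lambda\}$. $K_\Lambda$ is convex, $L^\infty$-bounded and weakly closed, hence weakly compact in $L^2[0,1]$. Your strict-convexity argument shows that rearrangements $\Lambda\circ\sigma$ are extreme. For the converse you should simply cite Ryff's theorem. The ``perturbation on a level set'' sketch is only heuristic, for two reasons. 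First, a non-rearrangement $g\prec\Lambda$ need not have level sets of positive measure; for example $g=\tfrac34-\tfrac z2\prec 1-z$. Second, the rearrangement of the perturbed function has to be controlled near the ends of the interval where the majorization inequality is strict.

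The genuine gap is in the reverse inclusion $K_\Lambda\subset\Mom_{\mathcal T}(\mathcal O_\Lambda)$, as you propose to carry it out. You plan to treat finite combinations of interval permutations and then pass to the limit ``using the closedness of the image''. That is circular. A priori you only know the image sits inside $K_\Lambda$. Your completed orbit (functions equimeasurable with $\Lambda$) is not weakly closed, and $\Mom_{\mathcal T}$ is not compact. So weak closedness of $\Mom_{\mathcal T}(\mathcal O_\Lambda)$ is exactly what surjectivity is supposed to deliver, and cannot be used to prove it.

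No approximation is needed; the disintegration you mention does the job directly. Write $g=D\Lambda$ by Ryff's theorem. Let $\pi$ be the doubly stochastic measure on $[0,1]^2$ with $\pi(B\times C)=\int_B D\mathbf 1_C$, and disintegrate it as $\pi=\int_0^1\delta_x\otimes\pi_x\,dx$, so that $D\Lambda(x)=\int\Lambda\,d\pi_x$. Let $q_x$ be the quantile function of $\pi_x$, and fix a measure-preserving isomorphism $\theta\mapsto(\theta_1,\theta_2)$ of $S^1$ onto $[0,1]^2$. Put $\varphi(x,\theta)=(q_x(\theta_1),\theta_2)$. Since the second marginal of $\pi$ is Lebesgue measure, $\varphi$ preserves the area of $A$, and
\[
\Mom_{\mathcal T}(\Lambda\circ\varphi)(x)=\int_0^1\Lambda\big(q_x(u)\big)\,du=\int\Lambda\,d\pi_x=g(x).
\]
This $\varphi$ is in general not invertible, so you should write $\Lambda\circ\varphi$ rather than $\Lambda\circ\varphi^{-1}$. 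Still, $\Lambda\circ\varphi$ is equimeasurable with $\Lambda$, so it lies in your completed orbit, which is the $L^2$-closure of the orbit under invertible measure-preserving maps. With this direct construction the image is exactly $K_\Lambda$, and the rest of your argument goes through.
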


Note that the maximal torus $\mathcal T $ can be viewed as the set of all 
symplectomorphisms that preserve
the level sets of the momentum map $\mu : [0, 1]\times S^1 \to [0, 1] $ (for the $S^1$-action on annulus $A$) given by the projection
onto the first component. This interpretation of $\mathcal T $ for the group
 ${\rm Diff}_{\omega}(A)$ allowed El~Hadrami
to conjecture that, given a symplectic toric manifolds $(M^{2n}, \omega, T^n, \mu)$
with momentum map $\mu$ for the torus $T^{n}$-action on $M^{2n}$, the subgroup of symplectic ``twist maps”, equivariant symplectomorphisms,
$$
\widetilde{ \mathcal T } =\{ \eta \in {\rm Symp}_\omega (M^{2n})~|~\mu\circ\eta=\mu \}\,,
$$
behaves like $\mathcal T $ in the group ${\rm Diff}_{\omega}(A) ={\rm Symp}_\omega(A)$, and it can be viewed as an analogue of a maximal
torus in the symplectomorphism group ${\rm Symp}_\omega (M^{2n})$, see \cite{hadrami1996poisson}. 
In particular, he proved that $\widetilde{ \mathcal T }$ has the expected properties in the special cases of $M = \mathbb{CP}^1$ or $\mathbb{CP}^2$. 

\smallskip

An extension of those results related to an infinite-dimensional convexity theorem  to the general case was implemented in \cite{mousavi2026}. 
Namely, let $(M, \omega, T, \mu)$ be a $2n$-dimensional toric manifold. It turns out that 
the subgroup of all equivariant symplectomorphism, $ \widetilde{ \mathcal T }={\rm Symp}_\omega(M, T)$, can play the role
of a maximal torus in the symplectomorphism group $ {\rm Symp}_\omega(M)$.
In \cite{mousavi2026} the authors show  that $ {\rm Symp}_\omega(M, T)$  is a closed, infinite-dimensional and
path-connected submanifold of $ {\rm Symp}_\omega(M)$ which is flat and totally geodesic with
respect to any ``toric" weak Riemannian metric. Moreover, it is a maximal
abelian subgroup of $ {\rm Symp}_\omega(M)$. This provides an extension of previous results
by Bao and Ratiu \cite{bao1997maximal} and El Hadrami \cite{hadrami1996poisson}.
They also prove in \cite{mousavi2026} that after an appropriate completion of
algebras an analogue of the  Schur-Horn-Kostant convexity theorem for $ {\rm Symp}_\omega(M, T)\hookrightarrow  {\rm Symp}_\omega(M)$
holds (relaxing on the way some hypotheses of \cite{bloch1993schur}), as well as describe the set of extreme points as the orbit of the Weyl semi-group through the 
``spectrum" $\Lambda$ of a function $f\in L^p(M)$.

\begin{remark} 
    It is interesting to compare that approach 
    with the setting of the Madelung transform (assuming for simplicity the $C^\infty$ rather than  Sobolev $H^s$ settings in both cases).
    For a {\it toric manifold} $M^{2n}$ elements of the group $ {\rm Symp}_\omega(M)$ corresponds to (generating) {\it functions of $2n$ variables}, while the {\it maximal torus} $ {\rm Symp}_\omega(M, T)$ of equivariant symplectomorphisms is described by {\it functions of $n$ variables}. 
This is similar to the action of $U(n+1)$ on generic orbits in $\mathfrak u(n+1)^*$ of dimension $\sim n^2$
with maximal torus of dimension $\sim n$. On the other hand, in the general {\it Madelung setting} for a manifold $M^n$, the group $\DC$ acts on wave functions, whose functional dimension corresponds to a single
complex-valued function (i.e., {\it two real-valued functions}) of $n$ variables on $M$, while the 
maximal torus $\mathbb T^\infty$ corresponds to {\it one real-valued function} of $n$ variables.
This is similar to the action of $U(n+1)$ on the smallest nontrivial coadjoint orbits, namely, orbits  of matrices of rank 1. Such orbits 
are symplectomorphic to $\mathbb{CP}^n$, while the maximal torus, being of real dimension $n$, corresponds to half of the orbit's dimension.

Thus the infinite-dimensional settings of the convexity results for ${\rm Symp}(M)$ in \cite{bloch1993schur, bao1997maximal, hadrami1996poisson, mousavi2026} on the one hand and for $\DC$-group and Madelung transform in this paper on the other hand are complementary to each other as describing analogues for  actions on, respectively,  the biggest and smallest coadjoint orbits in the unitary setting.
\end{remark}


\appendix
\section{Remarks on the Marsden--Weinstein structures}\label{sect:MW}
Here we relate the above constructions for prequantization of coadjoint orbits of the $\DC$ group to  prequantization of the Marsden--Weinstein symplectic structure, discussed in \cite{ChIs2025}.
Let $\gamma$ be an oriented codimension 2 compact submanifold (also called a membrane or a knot) in a manifold $M$ equipped with a volume form~$\mu$.  Recall that the {\it Marsden--Weinstein symplectic structure} on the space of membranes $\mathcal{M}$ is defined by the formula
$$
    \Omega^{MW}_\gamma(v,w) := \int_\gamma i_vi_w\,\mu
$$
where two tangent vectors $v,w$  are  regarded as a pair of variations of the membrane $\gamma\subset M$, i.e., two vector fields in $M$ defined only on $\gamma$.
If the manifold $M$ is of dimension $n$, the submanifold $\gamma$ is of dimension $n-2$, over which one integrates the $(n-2)$-form $i_vi_w\,\mu$ defined on $\gamma$.

On the other hand, one can regard $\gamma$ as a linear functional (a current, in the sense of distributions) on the space $\mathfrak{X}_\mu(M)$ of (null-homologous) divergence free vector fields in $M$, where the pairing is the flux through $\gamma$. Namely, given $V\in \mathfrak{X}_\mu(M)$ set $\langle \gamma, V\rangle = {\rm Flux}\, V|_\gamma $. 

\begin{proposition}[\cite{MW1983, Brylinski1993}]\label{prop:MWvsKK}
The  Marsden--Weinstein symplectic structure on $\mathcal{M}$ coincides with the Kirillov--Kostant symplectic structure on the coadjoint orbit $\mathcal O_\gamma$ in the natural completion $\overline{\mathfrak{X}^*_\mu}(M)$ of the dual space $\mathfrak{X}^*_\mu(M)$.
\end{proposition}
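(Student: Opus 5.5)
The plan is to realize $\mathcal{M}$ as the orbit of a single membrane $\gamma$ under $\mathrm{Diff}_\mu(M)$, and then to check that the two 2-forms agree on a pair of tangent vectors $v,w$ at $\gamma$. Identify $\gamma$ with the current $\langle\gamma,V\rangle=\mathrm{Flux}\,V|_\gamma$. Since $\gamma$ is null-homologous, write $\gamma=\partial\Gamma$ for a hypersurface $\Gamma$, as in part~(c) of the proposition on zero sets. Then for divergence-free $V$ one has $\mathrm{Flux}\,V|_\gamma=\int_\Gamma \iota_V\mu$. If $V$ is null-homologous, $\iota_V\mu=\mathrm d\beta_V$ for an $(n-2)$-form $\beta_V$, so $\langle\gamma,V\rangle=\int_\gamma\beta_V$. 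This is independent of the choice of $\Gamma$.

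First I would check equivariance. For $\varphi\in\mathrm{Diff}_\mu(M)$, $\mathrm{Ad}^*_\varphi\gamma$ is the current $\varphi(\gamma)$, because flux is natural under volume-preserving maps. Hence the coadjoint orbit $\mathcal O_\gamma$ in $\overline{\mathfrak{X}^*_\mu}(M)$ is the set of membranes isotopic to $\gamma$ by volume-preserving diffeomorphisms. I would argue that this is an open subset (a connected component) of $\mathcal M$: any normal variation of $\gamma$ extends to a divergence-free field, and in the null-homologous setting to an exact one. Next I would identify tangent vectors. The infinitesimal coadjoint action $\mathrm{ad}^*_V\gamma$ is the variation of $\gamma$ by $V|_\gamma$, and only the normal part matters.

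Then I would compute the Kirillov--Kostant form. It is $\omega^{KK}(\mathrm{ad}^*_V\gamma,\mathrm{ad}^*_W\gamma)=\langle\gamma,[V,W]\rangle$, up to the sign convention. Use the identity $\iota_{[V,W]}\mu=\mathrm d\,\iota_W\iota_V\mu$ for divergence-free $V,W$; it follows from $L_V\iota_W-\iota_WL_V=\iota_{[V,W]}$ together with Cartan's formula and $\mathrm d\iota_V\mu=0$. This shows $[V,W]$ is exact with primitive $\beta_{[V,W]}=\iota_W\iota_V\mu$. Therefore
\[
\langle\gamma,[V,W]\rangle=\int_\Gamma \mathrm d\,\iota_W\iota_V\mu=\int_\gamma \iota_W\iota_V\mu=\pm\,\Omega^{MW}_\gamma(v,w),
\]
where $v=V|_\gamma$ and $w=W|_\gamma$. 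Orientation and sign conventions then fix the sign.

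The main obstacle I expect is well-definedness together with surjectivity. Different extensions $V$ of the same variation $v$ must give the same value, and the right-hand side visibly depends only on $v,w$ along $\gamma$, which settles that. Every normal variation must also be realized by some divergence-free $V$. Since $\gamma$ has codimension 2, I would construct $V$ locally in a tubular neighbourhood and correct its divergence using a compactly supported primitive. Finally, I would note that the whole computation only needs the current $\gamma$ as an element of the completion $\overline{\mathfrak{X}^*_\mu}(M)$, so it holds in the natural completion.
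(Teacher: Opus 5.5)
Your proposal is correct and is the standard proof from the sources the paper cites (Marsden--Weinstein, Brylinski); the paper gives no proof of its own, and your ingredients are the same as theirs: the flux pairing through a hypersurface $\Gamma$ with $\partial\Gamma=\gamma$, the identity for $\iota_{[V,W]}\mu$ for divergence-free fields combined with Stokes on $\Gamma$, and the codimension-2 extension of normal variations to exact divergence-free fields for transitivity. One small slip: with the usual vector-field bracket, $L_V\iota_W-\iota_W L_V=\iota_{[V,W]}$ and Cartan's formula give $\iota_{[V,W]}\mu=\mathrm d\,\iota_V\iota_W\mu=-\mathrm d\,\iota_W\iota_V\mu$, but this only flips the overall sign, which you already leave to conventions.
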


Recall that null-homologous vector fields $V\in \mathfrak{X}_\mu(M)$ 
are defined by the condition that $i_V\mu$ is not only closed (as for all divergence-free
fields), but an exact $(n-1)$-form. Then  the smooth dual space  $\mathfrak{X}^*_\mu(M)$ 
is naturally identified with the quotient 
$\Omega^1(M)/Z^1(M)$ of all 1-forms on $M$ modulo closed ones, 
or with the space of exact 2-forms on $M$ by taking differential: 
$$
\mathfrak{X}^*_\mu(M):=\Omega^1(M)/Z^1(M)\simeq \mathrm d\Omega^1(M)\,.
$$
 
Consider singular elements in the completion $ \overline{\mathfrak{X}^*_\mu}(M)\supset \mathfrak{X}^*_\mu(M)$ represented by $\delta$-type 2-forms (``currents") $\delta_\gamma$, supported on codimension 2 oriented submanifolds $\gamma\subset M$. Exactness of the current $\delta_\gamma$ corresponds to $\gamma$ being a boundary, $\gamma=\partial\Gamma$. 

\medskip

Noncritical zeros of (complex-valued) wave functions $\psi$ on $M$ are a natural  source of codimension 2 submanifolds, and it is discussed in detail in \cite{ChIs2025}. 
Recall that according to Theorem~\ref{thm:sing-alpha} for  a wave function $\psi \in \WFgamma $ with regular zeros on $\gamma\subset M$
 the 1-form $\alpha=\frac {\rm i}{2} \left(\frac{d\bar\psi}{\bar\psi}-\frac{d\psi}{\psi} \right)$ satisfies
  $\rm d \alpha=2\pi\delta_\gamma\in \overline{\mathfrak{X}^*_\mu}(M)\,.$
Note that if  $\alpha$ were a smooth 1-form on $M$, being also  closed and considered modulo closed forms, its coset $[\alpha]$  would vanish in $ {\mathfrak{X}^*_\mu}(M)$. 
However, $\alpha$ has a pole-like singularity on
$\gamma\subset M$, and hence it can represent a nontrivial element in   $ \overline{\mathfrak{X}^*_\mu}(M)\,.$

\begin{remark}\label{rem:comparison}
Since the Marsden--Weinstein  symplectic structure is defined on the orbit $\mathcal O_\gamma\subset {\mathfrak{X}^*_\mu}(M)$ one can pull back this symplectic form to the space of wave functions.
The corresponding Liouville 1-form is $\alpha$, 
while the prequantum $S^1$-bundle is defined by phases of (certain equivalence classes of) the wave functions, see \cite{ChIs2025} for details. 

Here is a comparison of this ``inversely engineered'' symplectic structure on wave functions with the standard one on $C^\infty(M,\mathbb C)$ considered above for the orbits in~$\mathfrak{dc}^*$.  
\end{remark}

Consider the group $\mathrm{Diff}_\mu(M)$ of (null-homologous) volume-preserving diffeomorphisms of $M$ as a subgroup
of the semi-direct product group $\DC = \mathrm{Diff}(M)\ltimes C^\infty(M, S^1)$:
$$
\mathrm{Diff}_\mu(M) \times \{0\}\subset \mathrm{Diff}(M)\ltimes C^\infty(M, S^1) \,.
$$
Then its Lie algebra  $\mathfrak{X}_\mu(M)\oplus \{0\}\subset \mathfrak{X}(M)\ltimes C^\infty(M,\mathbb{R})$ is  also regarded as the corresponding Lie subalgebra, $\mathfrak{X}_\mu(M)\subset \mathfrak{dc}$. Hence for the dual spaces there is a natural projection $pr: \mathfrak{dc}^*\to \mathfrak{X}^*_\mu(M)$.
The smooth dual space $\mathfrak{dc}^*$ consists of pairs $(\alpha\otimes\varrho, \varrho)$ with
volume forms $\varrho$ and 1-forms  $\alpha$, while the smooth dual space $\mathfrak{X}^*_\mu(M)\simeq \Omega^1(M)/Z^1(M)$ consists of the cosets of 1-forms $[\alpha]$.

\begin{proposition}\label{prop:projLiePoisson}
The projection map $\mathfrak{dc}^*\to \mathfrak{X}^*_\mu(M)$ in  coordinates $(\alpha\otimes\varrho, \varrho)$
corresponds 
$$
(\alpha\otimes\varrho, \varrho)\mapsto \beta :=(\varrho/\mu) \alpha \mapsto [\beta]\in \Omega^1(M)/Z^1(M)\,.
$$ 
This maps takes the Lie-Poisson structure on the dual $\mathfrak{dc}^*$ to the Lie-Poisson structure on the dual $\mathfrak{X}^*_\mu(M)$. The corresponding symplectic reduction by the $\mathrm{Diff}_\mu(M) $-action takes  coadjoint orbits  $\mathcal O_{(\alpha\otimes\varrho,\varrho)}\subset \mathfrak{dc}^*$ to coadjoint orbits $\mathcal O_{[\beta]}\subset \mathfrak{X}^*_\mu(M)$.
\end{proposition}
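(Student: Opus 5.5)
The plan is to reduce everything to dualizing a Lie algebra inclusion and then to describe the induced map on coadjoint orbits. First I identify the restriction map. The inclusion $\iota\colon \mathfrak{X}_\mu(M)\hookrightarrow\mathfrak{dc}$, $V\mapsto (V,0)$, is a Lie algebra homomorphism: the bracket in $\mathfrak{dc}=\mathfrak{X}(M)\ltimes C^\infty(M,\mathbb R)$ restricted to pairs with zero function component is just the vector field bracket. Hence $pr=\iota^*$ is the dual of a Lie algebra homomorphism. It is therefore automatically a Poisson map between the Lie--Poisson structures, since $\{f\circ\iota^*,g\circ\iota^*\}(m)=\langle m,[\iota\, \delta f,\iota\,\delta g]\rangle=\langle \iota^*m,[\delta f,\delta g]\rangle$. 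This gives the second sentence of the proposition with essentially no computation.

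Next I compute the explicit coordinate formula. For $V\in\mathfrak{X}_\mu(M)$,
$\langle(\alpha\otimes\varrho,\varrho),(V,0)\rangle=\int_M \iota_V\alpha\,\varrho=\int_M (\varrho/\mu)\,\alpha(V)\,\mu=\int_M \beta\wedge\iota_V\mu$ with $\beta=(\varrho/\mu)\alpha$. Since $\iota_V\mu$ is exact for null-homologous $V$, Stokes' theorem (together with the decay assumptions if $M$ is noncompact) shows that this pairing kills closed $\beta$. Conversely, every exact $(n-1)$-form arises as some $\iota_V\mu$. So the functional depends only on, and determines, the coset $[\beta]\in\Omega^1(M)/Z^1(M)$, which is the stated formula. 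Equivalently, one may record the image as $\mathrm d\beta$.

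Finally I treat the orbit statement. Equivariance of $\iota^*$ with respect to $\mathrm{Diff}_\mu(M)\subset\DC$ sends $\mathrm{Diff}_\mu$-orbits into coadjoint orbits $\mathcal O_{[\beta]}$. The claim about symplectic reduction is that $pr$ is precisely the momentum map for the restricted $\mathrm{Diff}_\mu(M)$-action on $\mathcal O_{(\alpha\otimes\varrho,\varrho)}$. This holds because the momentum map of a subgroup action is the composition of the ambient momentum map (here, the inclusion of the orbit into $\mathfrak{dc}^*$) with $\iota^*$. By reduction by stages, the reduced spaces are then identified with the (images in the) coadjoint orbits $\mathcal O_{[\beta]}$.

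The main obstacle is the last point, namely showing that $pr(\mathcal O_{(\alpha\otimes\varrho,\varrho)})$ lands in, and covers, a single orbit rather than a union of orbits. The reason is that the full $\DC$-action also moves $\varrho$ by arbitrary diffeomorphisms and shifts $\alpha$ by $\mathrm da$, and neither operation is a $\mathrm{Diff}_\mu$-coadjoint move of $[\beta]$. I would handle this by stating the orbit correspondence at the level of the $\mathrm{Diff}_\mu$-quotient, that is, as Marsden--Weinstein reduction, rather than as an honest equality of orbits. I would check that the shift $\alpha\mapsto\alpha-\mathrm da$ changes $\beta$ by $(\varrho/\mu)\,\mathrm da$, which is generally not closed. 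For this reason I would expect to phrase the result only as the image being a union of $\mathrm{Diff}_\mu$-coadjoint orbits, each of which arises as a reduced space.
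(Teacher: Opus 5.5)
Your route is the paper's. You dualize the subalgebra inclusion, so $pr=\iota^*$ is Lie--Poisson and is the momentum map of $\mathrm{Diff}_\mu(M)\times\{0\}\subset\DC$ acting on $\mathfrak{dc}^*$, and the orbit statement comes from symplectic reduction for that action. Your pairing computation $\int_M\alpha(V)\,\varrho=\int_M\beta\wedge\iota_V\mu$ fills in the ``direct computation'' that the paper only mentions.

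Your caveat about the last sentence is correct, and the paper's proof, which invokes the same general reduction fact, establishes nothing stronger. Take $\varrho/\mu$ nonconstant. The $\DC$-orbit of $(0,\varrho)$ contains $(0,\varrho)$, which maps to the $\mathrm{Diff}_\mu(M)$-fixed point $[0]$. It also contains $(-\mathrm da\otimes\varrho,\varrho)$, which maps to $[-(\varrho/\mu)\,\mathrm da]$, whose differential $-\mathrm d(\varrho/\mu)\wedge\mathrm da$ is generically nonzero. So one $\DC$-orbit projects onto a union of several $\mathrm{Diff}_\mu(M)$-orbits.

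One claim needs correcting: reduction by stages does not identify the reduced spaces with the orbits $\mathcal O_{[\beta]}$. The reduced space attached to $\mathcal O_{[\beta]}$ is $\big(\mathcal O_{(\alpha\otimes\varrho,\varrho)}\cap pr^{-1}(\mathcal O_{[\beta]})\big)/\mathrm{Diff}_\mu(M)$. This space is labelled by $\mathcal O_{[\beta]}$ but is not that orbit. So state the result as: the orbits in the image index the reduced spaces, rather than each orbit arising as a reduced space.
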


\begin{proof}
The proof of the projection map is a direct computation based on the  explicit expression of the coadjoint action in $\mathfrak{dc}^*$.
More generally, for any subgroup $N\subset G$, the action of $N$ on the dual $\mathfrak g^*$ is Hamiltonian for the Lie-Poisson structure, while the projection of the duals $\mathfrak n^*\to\mathfrak g^*$ is the corresponding momentum map.
It sends the Lie-Poisson structure on $\mathfrak g^*$ to that on $\mathfrak n^*$,
while it is a symplectic reduction of the corresponding coadjoint orbits. Applying this to the subgroup 
$\mathrm{Diff}_\mu(M) \times \{0\}\subset \mathrm{Diff}(M)\ltimes C^\infty(M, \mathbb{R})$
    implies the result.
\end{proof}

\begin{remark}\label{rem:smooth}
Note that the resulting 1-form $\beta :=(\varrho/\mu) \alpha $ is smooth, since so is $\alpha\otimes\varrho$, while $\mu$ is a non-vanishing volume form. Thus as the result of this reduction one naturally obtains the symplectic structure on 
coadjoint orbits $\mathcal O_{[\beta]}\subset \mathfrak{X}^*_\mu(M)$, rather than on the coadjoint orbits $\mathcal O_\gamma\subset \overline{\mathfrak{X}^*_\mu}(M)$ of singular elements $\delta_\gamma$ 
as needed for the Marsden--Weinstein structure.
\end{remark}



\bibliographystyle{amsplainnat}
\bibliography{bibliography}

\end{document}